\documentclass[12pt, a4paper, abstracton, bibliography=totoc]{scrartcl}
\pdfoutput=1

\usepackage{amsmath, amsthm, amsfonts, amssymb}
\usepackage[utf8]{inputenc}
\usepackage[T1]{fontenc}
\usepackage{color}
\usepackage{slashed} 
\usepackage[all, cmtip]{xy}
\usepackage{hyperref} 

\let\counterwithout\relax

\usepackage{chngcntr} 

\usepackage{graphicx} 
\usepackage[english]{babel}
\usepackage{microtype}
\usepackage{bm} 
\usepackage{mathtools} 

\newcommand{\IN}{\mathbb{N}}
\newcommand{\IZ}{\mathbb{Z}}

\newcommand{\IR}{\mathbb{R}}
\newcommand{\IC}{\mathbb{C}}

\newcommand{\frakS}{\mathfrak{S}}
\newcommand{\injrad}{\operatorname{inj-rad}}
\newcommand{\Rm}{\operatorname{Rm}}

\newcommand{\LLip}{L\text{-}\operatorname{Lip}}
\newcommand{\IB}{\mathfrak{B}}
\newcommand{\IK}{\mathfrak{K}}

\newcommand{\supp}{\operatorname{supp}}
\newcommand{\diam}{\operatorname{diam}}
\newcommand{\Hom}{\operatorname{Hom}}

\newcommand{\id}{\operatorname{id}}
\newcommand{\kernel}{\operatorname{ker}}

\newcommand{\image}{\operatorname{im}}

\newcommand{\card}{\#}

\newcommand{\trace}{\operatorname{tr}}
\newcommand{\ch}{\operatorname{ch}}
\newcommand{\vol}{\operatorname{vol}}
\newcommand{\ind}{\operatorname{ind}}

\newcommand{\op}{\mathrm{op}}

\newcommand{\HPucont}{H \! P_\mathrm{cont}}
\newcommand{\HCucont}{H \! C_\mathrm{cont}}
\newcommand{\HHucont}{H \! H_\mathrm{cont}}

\newcommand{\HudR}{H^{u, \mathrm{dR}}}

\newcommand{\HbdR}{H_{b, \mathrm{dR}}}
\newcommand{\HudRco}{H_{u, \mathrm{dR}}}

\newcommand{\Cucont}{C_{\mathrm{cont}}}
\newcommand{\Clucont}{C_{\lambda, \mathrm{cont}}}
\newcommand{\Winftyone}{W^{\infty, 1}}

\newcommand{\ev}{\mathrm{ev}}
\newcommand{\odd}{\mathrm{odd}}
\newcommand{\UPsiDO}{\mathrm{U}\Psi\mathrm{DO}}

\newcommand{\Frechet}{Fr\'{e}chet }
\newcommand{\Poincare}{Poincar\'{e} }
\newcommand{\Folner}{F{\o}lner }
\newcommand{\Spakula}{\v{S}pakula }
\newcommand{\spinc}{spin$^c$ }

\newcommand*{\largecdot}{\raisebox{-0.25ex}{\scalebox{1.2}{$\cdot$}}}

\DeclareMathOperator{\hatotimes}{\hat{\otimes}}
\DeclareMathOperator{\barotimes}{\bar{\otimes}}

\theoremstyle{plain}
\newtheorem{thm}{Theorem}[section]
\newtheorem*{thm*}{Theorem}
\newtheorem*{mainthm*}{Main Result}
\newtheorem{cor}[thm]{Corollary}
\newtheorem*{cor*}{Corollary}
\newtheorem{lem}[thm]{Lemma}
\newtheorem{prop}[thm]{Proposition}

\newtheorem{question}[thm]{Question}

\newtheorem{thmintro}{Theorem}

\newtheorem{corintro}[thmintro]{Corollary}

\theoremstyle{definition}
\newtheorem{defn-alt}[thm]{Definition}
\newtheorem{example-alt}[thm]{Example}
\newtheorem{examples-alt}[thm]{Examples}
\newtheorem{rem-alt}[thm]{Remark}
\newtheorem{nota-alt}[thm]{Notation}

\newenvironment{defn}    
{
	\pushQED{\qed}\begin{defn-alt}}
	{\popQED\end{defn-alt}}

\newenvironment{example}    
{
	\pushQED{\qed}\begin{example-alt}}
	{\popQED\end{example-alt}}
	
\newenvironment{examples}    
{
	\pushQED{\qed}\begin{examples-alt}}
	{\popQED\end{examples-alt}}

\newenvironment{rem}    
{
	\pushQED{\qed}\begin{rem-alt}}
	{\popQED\end{rem-alt}}

\numberwithin{equation}{section}

\counterwithout{footnote}{section}

\makeatletter
\def\blfootnote{\gdef\@thefnmark{}\@footnotetext}
\makeatother

\begin{document}

\title{Index theorems for\\ uniformly elliptic operators}
\author{Alexander Engel}
\date{}
\maketitle

\vspace*{-3.5\baselineskip}
\begin{center}
\footnotesize{
\textit{
Fakult\"{a}t f\"{u}r Mathematik\\
Universit\"{a}t Regensburg\\
93040 Regensburg, GERMANY\\
\href{mailto:alexander.engel@mathematik.uni-regensburg.de}{alexander.engel@mathematik.uni-regensburg.de}
}}
\end{center}

\vspace*{-0.5\baselineskip}
\begin{abstract}
We generalize Roe's index theorem for graded generalized Dirac operators on amenable manifolds to multigraded elliptic uniform pseudodifferential operators.

The generalization will follow from a local index theorem that is valid on any manifold of bounded geometry. This local formula incorporates the uniform estimates present in the definition of uniform pseudodifferential operators.
\end{abstract}

\tableofcontents

\section{Introduction}

Recall the following index theorem of Roe for amenable manifolds (with notation adapted to the one used in this article):

\begin{thm*}[{\cite[Theorem 8.2]{roe_index_1}}]
Let $M$ be a Riemannian manifold of bounded geometry and $D$ a generalized Dirac operator associated to a graded Dirac bundle $S$ of bounded geometry over $M$.

Let $(M_i)_i$ be a \Folner sequence\footnote{That is to say, for every $r > 0$ we have $\frac{\vol B_r(\partial M_i)}{\vol M_i} \stackrel{i \to \infty}\longrightarrow 0$. Manifolds admitting such a sequence are called \emph{amenable}.} for $M$, $\tau \in (\ell^\infty)^\ast$ a linear functional associated to a free ultrafilter on $\IN$, and $\theta$ the corresponding trace on the uniform Roe algebra of $M$.

Then we have
\[\theta(\mu_u(D)) = \tau \Big( \frac{1}{\vol M_i} \int_{M_i} \ind(D) \Big).\]
\end{thm*}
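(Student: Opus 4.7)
The plan is to compute both sides via the heat semigroup $e^{-tD^2}$, using the McKean--Singer formalism on the left and a pointwise local index asymptotic on the right, and then to match the two using the \Folner hypothesis.

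First I would represent the index class $\mu_u(D)$ in the K-theory of the uniform Roe algebra by a Connes--Moscovici type idempotent built from $e^{-tD^2}$. Applying the trace $\theta$ and invoking the McKean--Singer identity for this idempotent gives
\[ \theta(\mu_u(D)) = \theta_s(e^{-tD^2}) \quad \text{for all } t>0, \]
where $\theta_s$ denotes the supertrace obtained by combining $\theta$ with the $\IZ/2$-grading on $S$. On a bounded geometry manifold $e^{-tD^2}$ lies in (a smooth dense subalgebra of) the uniform Roe algebra of $M$, and by the very definition of $\theta$ as an averaging trace associated to $(M_i)$ and $\tau$, any operator $A$ with sufficiently regular integral kernel $k_A$ satisfies
\[ \theta(A) = \tau\Bigl( \frac{1}{\vol M_i} \int_{M_i} \trace k_A(x,x)\, dx \Bigr). \]
Applied to $A = e^{-tD^2}$ with the supertrace in place of the trace, this yields
\[ \theta(\mu_u(D)) = \tau\Bigl( \frac{1}{\vol M_i} \int_{M_i} \operatorname{str} k_{e^{-tD^2}}(x,x)\, dx \Bigr) \]
for every $t>0$.

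Finally I would let $t \to 0$. The crucial input is a local index theorem on bounded geometry manifolds with uniform-in-$x$ remainder:
\[ \operatorname{str} k_{e^{-tD^2}}(x,x) = \ind(D)(x) + O(t), \]
with the $O(t)$ uniform in $x \in M$. The \Folner property of $(M_i)$ ensures that the averaged remainder $\tau\bigl(\frac{1}{\vol M_i} \int_{M_i} O(t)\, dx\bigr)$ is itself $O(t)$, so that I can exchange the limit $t \to 0$ with the averaging and read off the stated identity.

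The main obstacle is the local index theorem itself: the Getzler rescaling argument must be executed so that every implicit constant depends only on the bounded-geometry bounds of $(M,S)$ and not on the base point, and, more delicately, so that the full pointwise asymptotic expansion of the heat kernel comes with \emph{uniform} control of every coefficient and every remainder term. This is precisely the ``local formula incorporating uniform estimates'' advertised in the abstract, and I expect it to occupy the technical core of the paper.
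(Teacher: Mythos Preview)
Your heat-kernel outline is essentially Roe's original argument from \cite{roe_index_1} and is a valid route to the Dirac case. However, it is \emph{not} how the present paper re-derives the statement. The paper explicitly remarks that the heat kernel method is specific to Dirac operators and cannot treat general elliptic uniform pseudodifferential operators; its strategy is instead to build enough machinery to \emph{reduce} the pseudodifferential case to the Dirac case, and Roe's theorem then drops out as the special case $u=[\IC]$, $P=D$ of the general result.

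Concretely, the paper first proves a local index theorem of a different flavour (Theorem~\ref{thm:local_thm_twisted}): a commutative square matching the cap product $-\cap[D]\colon K_u^\ast(M)\to K^u_{m-\ast}(M)$ with the map $\ch(-)\wedge\ind(D)$ on bounded de Rham cohomology, via Chern characters and \Poincare duality. This is obtained by invoking the Connes--Moscovici cyclic-cocycle computations rather than by running a uniform Getzler rescaling. The global statement for amenable $M$ (Corollary~\ref{cor:pairing_global}) then follows formally from the compatibility of the analytic index pairing $\langle-,-\rangle_\theta$ and the de Rham pairing $\langle-,-\rangle_{(M_i)_i,\tau}$ with the two Chern characters.

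So your expectation in the last paragraph is off: the ``local formula incorporating uniform estimates'' advertised in the abstract is not a uniform pointwise heat-kernel asymptotic but the commutative diagram of Theorem~\ref{thm:local_thm_twisted}, and the technical core of the paper is the supporting machinery---uniform $K$-theory and $K$-homology, uniform de Rham (co)homology, the isomorphism $\HPucont^\ast(\Winftyone(M))\cong\HudR_\ast(M)$, the \Poincare dualities, and the uniform Thom isomorphism---needed to make that diagram meaningful and to pass from Dirac operators to arbitrary elliptic symbols.

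One small correction to your sketch: the \Folner condition is not what makes the averaged $O(t)$ remainder small (a uniform-in-$x$ bound already gives that trivially); it is needed earlier, to ensure that $\theta$ is a \emph{trace} so that McKean--Singer yields $t$-independence of $\theta_s(e^{-tD^2})$.
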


Here $\ind(D)$ is the usual integrand for the topological index of $D$ in the Atiyah--Singer index formula, so the right hand side is topological in nature. On the left hand side of the formula we have the coarse index class $\mu_u(D) \in K_0(C_u^\ast(M))$ of $D$ in the $K$-theory of the uniform Roe algebra of $M$ evaluated under the trace $\theta$.  This is an analytic expression and may be computed as $\theta(\mu_u(D)) = \tau \Big( \frac{1}{\vol M_i} \int_{M_i} \trace_s k_{f(D)}(x,x) \ dx \Big)$, where $k_{f(D)}(x,y)$ is the integral kernel of the smoothing operator $f(D)$, where $f$ is an even Schwartz function with $f(0) = 1$.

In this article we will generalize this theorem to all multigraded, elliptic, symmetric uniform pseudodifferential operators. So especially we also encompass Toeplitz operators since they are included in the ungraded case. This generalization will follow from a local index theorem that will hold on any manifold of bounded geometry, i.e., without an amenability assumption on $M$.

Let us state our local index theorem in the formulation using twisted Dirac operators associated to \spinc structures:

\begin{thmintro}[Theorem~\ref{thm:local_thm_twisted}]
Let $M$ be an $m$-dimensional \spinc manifold of bounded geometry and without boundary. Denote the associated Dirac operator by $D$.

Then we have the following commutative diagram:
\[\xymatrix{
K^\ast_u(M) \ar[r]^-{- \cap [D]}_-{\cong} \ar[d]_-{\ch(-) \wedge \ind(D)} & K_{m-\ast}^u(M) \ar[d]^-{\alpha_\ast \circ \ch^\ast}\\
\HbdR^\ast(M) \ar[r]_-{\cong} & \HudR_{m-\ast}(M)}\]
where in the top row $\ast$ is either $0$ or $1$ and in the bottom row $\ast$ is either $\ev$ or $\odd$.
\end{thmintro}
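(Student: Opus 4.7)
The plan is to verify the diagram on generators of $K_u^\ast(M)$, which in the uniform setting are represented by (formal differences of) vector bundles of bounded geometry. Given such a uniform bundle $E$ defining a class $[E]\in K_u^0(M)$, the cap product $[E]\cap[D]\in K^u_m(M)$ is represented by the twisted \spinc Dirac operator $D_E = D\otimes\nabla^E$. Hence the commutativity of the square reduces, once applied to such a generator, to the pointwise identity
\[\alpha_\ast \ch^\ast(D_E) \;=\; \ch(E)\wedge\ind(D)\]
in $\HudR_{m-\ast}(M)$, interpreted through the lower Poincaré-duality isomorphism as an equality of bounded differential forms on $M$.

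First I would unfold the right-hand side via Chern--Weil: with $L$ the determinant line of the \spinc structure, $\ind(D)=\hat A(M)\wedge e^{c_1(L)/2}$, so the target integrand is the uniformly bounded form $\ch(E)\wedge \hat A(M)\wedge e^{c_1(L)/2}$. Next, for the left-hand side I would represent $\ch^\ast(D_E)$ by a JLO-type cocycle built from the heat semigroup $e^{-tD_E^2}$ (equivalently, by the Connes--Chern character of the unbounded Kasparov module $(D_E,L^2)$), and then apply Getzler's rescaling pointwise to extract the short-time asymptotic of the supertrace $\trace_s k_{e^{-tD_E^2}}(x,x)$. The leading term is precisely the integrand identified above, and bounded geometry of $M$ and of $E$ makes this asymptotic uniform in $x$, so the resulting form lies in the relevant bounded/uniform de Rham complex.

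Third, I would verify that both horizontal arrows match this pointwise identification. For the top row this is essentially the construction of $[D]$ as a \spinc fundamental class, which by definition sends $[E]$ to $[D_E]$. For the bottom row, the isomorphism $\HbdR^\ast(M)\cong \HudR_{m-\ast}(M)$ is the uniform version of Poincaré duality given by cap product with the fundamental class coming from the orientation; showing that $\alpha_\ast$ agrees with this geometric cap product, and not some cohomologous perturbation, is where most of the bookkeeping lies.

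The main obstacle I anticipate is not the Getzler calculation itself but the uniformity throughout: every estimate (off-diagonal heat-kernel decay, the pointwise short-time expansion, convergence of the JLO/Connes--Chern cocycle, and continuity of $\alpha_\ast$) must depend only on the bounded-geometry constants of $M$ and $E$, so that the output genuinely represents classes in $\HbdR^\ast(M)$ and $\HudR_{m-\ast}(M)$ rather than merely in the unrestricted de Rham theory. Once such uniform estimates are in place, running Getzler's rescaling simultaneously at every $x\in M$ produces the desired equality of bounded forms, and the two vertical maps agree as claimed.
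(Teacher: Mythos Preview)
Your proposal is correct and follows essentially the same route as the paper: the paper's proof simply cites the computations in Connes--Moscovici \cite[Section~3]{connes_moscovici}, which are exactly the heat-kernel/JLO representation of the Chern--Connes character together with Getzler rescaling that you outline, and then remarks that these computations go through in the bounded-geometry setting with the required uniformity (and refers to \cite[Section~10.2]{GBVF} for the translation between the heat-operator cocycles and the finitely-summable Chern character used here). Your identification of the uniform estimates as the main technical point is precisely what the paper means by ``noting that the computations also apply in our case where we have bounded geometry and the uniformity conditions.''
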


Here $K_{m-\ast}^u(M)$ is uniform $K$-homology of $M$ invented by \Spakula \cite{spakula_uniform_k_homology} and $K_u^\ast(M)$ is the corresponding uniform $K$-theory which we will recall in Section~\ref{sec_uniform_K}. The map $- \cap [D]$ is the cap product and that it is an isomorphism was shown in \cite[Section~4.4]{engel_indices_UPDO}. Moreover, $\HbdR^\ast(M)$ denotes the bounded de Rham cohomology of $M$ and $\ind(D)$ the topological index class of $D$ in there. Furthermore, $\HudR_{m-\ast}(M)$ is the uniform de Rham homology of $M$ to be defined in Section \ref{sec:u_de_Rham_currents} via Connes' cyclic cohomology, and that it is \Poincare dual to bounded de Rham cohomology is proved in Theorem \ref{thm:Poincare_de_Rham}. Finally, let us note that we will also prove in Section \ref{sec:chern_isos} that the Chern characters induce isomorphisms after a certain completion that also kills torsion, similar to the case of compact manifolds.

Using a series of steps as in Connes' and Moscovici's proof of \cite[Theorem 3.9]{connes_moscovici} we will generalize the above computation of the \Poincare dual of $(\alpha_\ast \circ \ch^\ast)([D]) \in \HudR_{m-\ast}(M)$ to symmetric and elliptic uniform pseudodifferential operators:

\begin{thmintro}[Theorem~\ref{thm:local_thm_pseudos} and Remark~\ref{rem_finitely_summable_not_needed}]
Let $M$ be an oriented Riemannian manifold of bounded geometry and without boundary, and $P$ be a symmetric and elliptic uniform pseudodifferential operator of positive order.

Then $\ind(P) \in \HbdR^\ast(M)$ is the \Poincare dual of $(\alpha_\ast \circ \ch^\ast)([P]) \in \HudR_\ast(M)$.
\end{thmintro}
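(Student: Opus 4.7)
The plan is to reduce the statement for an arbitrary symmetric elliptic uniform pseudodifferential operator $P$ to the twisted Dirac case already handled by Theorem~\ref{thm:local_thm_twisted}, following the template of \cite[Proof of Theorem 3.9]{connes_moscovici}. The first observation is that both sides of the claimed equality depend only on the principal symbol class of $P$: the topological index $\ind(P) \in \HbdR^\ast(M)$ is by construction the Atiyah--Singer integrand attached to $\sigma(P)$, and the uniform K-homology class $[P] \in K_\ast^u(M)$ is symbol-determined because uniform pseudodifferential operators of negative order lie in the uniform Roe algebra modulo smoothing. Hence $(\alpha_\ast \circ \ch^\ast)([P])$ is likewise a symbol invariant, and it suffices to prove the formula on a family of symbols rich enough to detect the relevant K-group.

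Such a family is produced by a twisted-Dirac construction on a \spinc model. Since $M$ is oriented, one can pass to its cotangent bundle $T^\ast M$---or stabilise by crossing with $\IR^{2N}$---to obtain a \spinc manifold of bounded geometry together with a \spinc Dirac operator $D$. A uniform Thom/Bott isomorphism then identifies every uniform elliptic symbol class on $M$, up to stabilisation, with a class of the form $\sigma(D_E)$ for some virtual bundle $[E]=[E^+]-[E^-]\in K_u^0(M)$ with $E^\pm$ of bounded geometry. In particular, up to the identifications built into the cap product $-\cap [D]$, we may write $[P]=[E]\cap [D]\in K_\ast^u(M)$ for a suitable $[E]$.

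The remainder is then essentially a diagram chase. Applying Theorem~\ref{thm:local_thm_twisted} to the class $[E]$ gives
\[(\alpha_\ast \circ \ch^\ast)([P]) \ = \ (\alpha_\ast \circ \ch^\ast)\bigl([E]\cap [D]\bigr) \ = \ \text{\Poincare dual of } \ch([E])\wedge \ind(D) \in \HudR_\ast(M).\]
On the topological side, the standard multiplicativity of the Atiyah--Singer integrand under twisting---carried out in bounded de Rham cohomology thanks to the uniform estimates on $E^\pm$ and on the \spinc structure---yields $\ind(P)=\ch([E])\wedge \ind(D)\in \HbdR^\ast(M)$. Combining the two identifies $\ind(P)$ with the \Poincare dual of $(\alpha_\ast \circ \ch^\ast)([P])$, as desired. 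The appeal to Remark~\ref{rem_finitely_summable_not_needed} merely serves to emphasise that the whole reduction lives at the symbol level, so no finite-summability hypothesis on $P$ is ever used.

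The main obstacle will be setting up this symbol-level reduction in the uniform category while keeping bounded geometry under control throughout. Concretely, one needs (i) a uniform Thom isomorphism for $\pi\colon T^\ast M\to M$ with uniform control on the fibrewise \spinc data, (ii) bounded-geometry vector bundles $E^\pm$ realising a prescribed uniform elliptic symbol class with all derivatives uniformly bounded, and (iii) a symbol homotopy from $\sigma(P)$ to $\sigma(D_E)$ that can be implemented within the uniform pseudodifferential calculus so that the equality $[P]=[D_E]$ holds in $K_\ast^u(M)$. Each is a bounded-geometry refinement of a classical Atiyah--Singer ingredient; once they are in place, the remaining steps of the Connes--Moscovici argument go through verbatim in the uniform setting.
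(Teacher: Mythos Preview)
Your overall strategy---reduce to twisted Dirac-type operators via a uniform Thom isomorphism and then invoke the Connes--Moscovici computation behind Theorem~\ref{thm:local_thm_twisted}---is the paper's strategy too. The difference is in where the Dirac operator lives, and your version has a type-checking problem. The paper stays on $M$: if $\dim M$ is odd it crosses with $S^1$, and on the resulting oriented even-dimensional manifold it uses the \emph{signature operator}, whose principal symbol furnishes (after inverting~$2$) the orientation class for the uniform Thom isomorphism $K_u^\ast(M)[\tfrac12]\cong K_u^\ast(DM,SM)[\tfrac12]$ of Theorem~\ref{thm:thom}. Hence every elliptic symbol class is, modulo $2$-torsion, that of a twisted signature operator on $M$ itself---a genuine Dirac-type operator to which the heat-kernel computation applies directly; the target groups being $\IC$-vector spaces, the $2$-torsion washes out.

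You instead pass to $T^\ast M$ (or stabilise by $\IR^{2N}$) to manufacture a \spinc manifold carrying a \spinc Dirac operator $D$, and then write $[P]=[E]\cap[D]\in K_\ast^u(M)$ with $[E]\in K_u^0(M)$. This does not type-check: your $D$ lives on $T^\ast M$, not on $M$, so the cap product as written is undefined and Theorem~\ref{thm:local_thm_twisted} cannot be applied to it on $M$. One could try to push the \spinc Dirac class on $T^\ast M$ down to $M$ via a uniform wrong-way map for the zero section, but on a merely oriented $M$ what one recovers is (a power-of-$2$ multiple of) the signature class anyway---so the detour reproduces the paper's route at greater cost, with the additional burden of developing uniform wrong-way maps. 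The clean fix is to stay on $M$ and use the signature operator from the outset, exactly as the paper does.
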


Using the above local index theorem we will derive as a corollary the following local index formula:

\begin{corintro}[Corollary~\ref{cor:pairing_compactly_supported}]
Let $[\varphi] \in H_{c, \mathrm{dR}}^k(M)$ be a compactly supported cohomology class and define the analytic index $\ind_{[\varphi]}(P)$ as Connes--Moscovici \cite{connes_moscovici} for $P$ being a multigraded, symmetric, elliptic uniform pseudodifferential operator of positive order. Then we have
\[\ind_{[\varphi]}(P) = \int_M \ind(P) \wedge [\varphi]\]
and this pairing is continuous, i.e., $\int_M \ind(P) \wedge [\varphi] \le \|\ind(P) \|_\infty \cdot \| [\varphi] \|_1$, where $\| - \|_\infty$ denotes the sup-seminorm on $\HbdR^{m-k}(M)$ and $\| - \|_1$ the $L^1$-seminorm on $H_{c, \mathrm{dR}}^k(M)$.
\end{corintro}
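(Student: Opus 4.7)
The plan is to deduce the formula as an immediate consequence of Theorem~\ref{thm:local_thm_pseudos}, by first identifying $\ind_{[\varphi]}(P)$ with the natural pairing between uniform de Rham homology and compactly supported de Rham cohomology, and then transporting that pairing across \Poincare duality.

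First I would unwind the Connes--Moscovici definition of $\ind_{[\varphi]}(P)$. Their recipe turns a compactly supported cohomology class $[\varphi] \in H_{c, \mathrm{dR}}^k(M)$ into a cyclic cocycle on the smoothing ideal and pairs it with the $K$-homology class $[P]$; compact support is precisely what allows this cocycle to be constructed without any amenability/trace input and guarantees absolute convergence of the pairing. By the very construction of the Chern character $\ch^\ast \colon K^u_{m-\ast}(M) \to \HCucont_{m-\ast}$ and of the map $\alpha_\ast$ into uniform de Rham homology developed in Section~\ref{sec:u_de_Rham_currents}, this pairing equals $\langle (\alpha_\ast \circ \ch^\ast)([P]),\ [\varphi]\rangle$.

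Second I would apply Theorem~\ref{thm:local_thm_pseudos}: $\ind(P) \in \HbdR^{m-k}(M)$ is the \Poincare dual of $(\alpha_\ast \circ \ch^\ast)([P]) \in \HudR_{m-k}(M)$. The \Poincare duality isomorphism of Theorem~\ref{thm:Poincare_de_Rham} is implemented on the level of forms/currents by wedge-and-integrate, so it converts the abstract pairing above into
\[
\langle (\alpha_\ast \circ \ch^\ast)([P]),\ [\varphi] \rangle \;=\; \int_M \ind(P) \wedge [\varphi],
\]
which is the claimed equality.

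For the continuity estimate I would take any smooth bounded representative $\omega$ of $\ind(P)$ and any smooth compactly supported representative $\varphi_0$ of $[\varphi]$; then pointwise $|\omega \wedge \varphi_0| \le |\omega| \cdot |\varphi_0|$ and integration yields $|\int_M \omega \wedge \varphi_0| \le \|\omega\|_\infty \cdot \int_M |\varphi_0|$. Taking infima over the choice of representatives produces the inequality with the sup-seminorm on $\HbdR^{m-k}(M)$ and the $L^1$-seminorm on $H_{c, \mathrm{dR}}^k(M)$, and in particular shows that both sides only depend on the respective classes (not on representatives), which is needed for the identity to even make sense.

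The main obstacle I anticipate is the first step, namely making precise that the Connes--Moscovici analytic index really matches the geometric pairing through $\alpha_\ast \circ \ch^\ast$ in the present uniform / bounded geometry context. Connes and Moscovici phrase things in terms of specific cyclic cocycles built from a representative of $[\varphi]$ and auxiliary smoothing operators, while here the class $(\alpha_\ast \circ \ch^\ast)([P])$ has been defined abstractly via continuous cyclic homology and currents; the compatibility is essentially tautological but requires carefully matching normalisations (factors of $(2\pi i)^{-k/2}$, signs, and the odd/even convention) between the two constructions. Once this bookkeeping is done, the corollary reduces at once to Theorem~\ref{thm:local_thm_pseudos}.
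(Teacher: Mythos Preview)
Your proposal is correct and matches the paper's own argument essentially verbatim: the paper also reduces the identity to Theorem~\ref{thm:local_thm_pseudos} via the identification $\ind_{[\varphi]}(P) = \langle (\alpha_\ast \circ \ch^\ast)(P), [\varphi]\rangle$ (which it records in a footnote, absorbing the normalisation constants exactly as you anticipate), and derives the continuity estimate directly from the definition of the seminorms using that $\ind(P)$ is represented by a bounded form. The only detail the paper adds that you omit is the remark that if $M$ is not orientable one first passes to the orientation cover before invoking Theorem~\ref{thm:local_thm_pseudos}.
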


Note that the corollary reads basically the same as the local index formula of Connes and Moscovici \cite{connes_moscovici}. The fundamentally new thing in it is the continuity statement for which we need the uniformity assumption for $P$.

As a second corollary to the above local index theorem we will, as already written, derive the generalization of Roe's index theorem for amenable manifolds.

\begin{corintro}[Corollary~\ref{cor:pairing_global}]
Let $M$ be a manifold of bounded geometry and without boundary, let $(M_i)_i$ be a \Folner sequence for $M$ and let $\tau \in (\ell^\infty)^\ast$ be a linear functional associated to a free ultrafilter on $\IN$. Denote the from the choice of \Folner sequence and functional $\tau$ resulting functional on $K_0(C_u^\ast(M))$ by $\theta$.

Then for both $p \in \{ 0,1 \}$, every class $[P] \in K_p^u(M)$ with $P$ being a $p$-graded, symmetric, elliptic uniform pseudodifferential operator over $M$, and every $u \in K_u^p(M)$ we have
\[\langle u, [P] \rangle_\theta = \langle \ch(u) \wedge \ind(P), [M] \rangle_{(M_i)_i, \tau}.\]
\end{corintro}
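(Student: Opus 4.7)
The plan is to derive this as a straightforward consequence of the pseudodifferential local index theorem (Theorem~\ref{thm:local_thm_pseudos}) once the trace $\theta$ is identified, via the Chern character, with the integration-against-$[M]$ pairing coming from the F\o{}lner sequence. Concretely, I would set up a commutative diagram
\[\xymatrix{
K_u^p(M) \otimes K_p^u(M) \ar[r]^-{\cap} \ar[d]_-{\ch \otimes (\alpha_\ast \circ \ch^\ast)} & K_p(C_u^\ast(M)) \ar[r]^-{\theta} \ar[d] & \IC \ar@{=}[d]\\
\HbdR^\ast(M) \otimes \HudR_{m-\ast}(M) \ar[r]_-{\cap} & \HudR_m(M) \ar[r]_-{\langle -, [M]\rangle} & \IC
}\]
and then chase $u \otimes [P]$ through both routes.

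The first route, going right then down-right, gives by definition $\langle u, [P]\rangle_\theta$. The second route, going down then right then right, factors as follows: I would first apply Theorem~\ref{thm:local_thm_pseudos} (respectively the twisted version via the cap-product compatibility in the diagram of Theorem~\ref{thm:local_thm_twisted}) to identify $(\alpha_\ast \circ \ch^\ast)(u \cap [P])$ with the \Poincare dual of $\ch(u) \wedge \ind(P) \in \HbdR^\ast(M)$. Pairing this dual class with $[M]$ and invoking \Poincare duality (Theorem~\ref{thm:Poincare_de_Rham}) turns it into $\langle \ch(u) \wedge \ind(P), [M]\rangle_{(M_i)_i, \tau}$, which is the right-hand side of the claim.

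The substance of the proof therefore lies in commutativity of the right-hand square, i.e.\ in showing that under the Chern-character / Connes map $K_0(C_u^\ast(M)) \to \HudR_\ast(M)$ the functional $\theta$ pulls back to the pairing with the fundamental class $[M]$ induced by $(M_i)_i$ and $\tau$. For a smoothing operator $T$ with uniformly bounded smooth kernel one has the explicit formula
\[\theta(T) = \tau\Bigl(\tfrac{1}{\vol M_i}\int_{M_i} \trace k_T(x,x)\,dx\Bigr),\]
which is exactly the F\o{}lner-integral definition of the pairing with $[M]$ after identifying cyclic cycles with uniform currents via $\alpha_\ast$; the extension from smoothing operators to idempotents representing arbitrary $K_0$-classes is the standard Chern-character computation. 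This is the step that essentially encodes the original Roe theorem recalled at the beginning of the introduction.

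The main obstacle is this last step, namely verifying that the ultrafilter-trace $\theta$ is represented, on the level of the smooth subalgebra of $C_u^\ast(M)$ dense enough to compute $K$-theory, by the cyclic $0$-cocycle corresponding to $[M] \in \HudR_m(M)$. Once this compatibility is in place, the corollary is a pure diagram chase: apply the local index theorem to turn the right square into an equality of numbers, and combine with the definition of $\langle -,-\rangle_\theta$ to conclude.
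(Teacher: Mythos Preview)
Your overall strategy---reduce to the local index theorem via a compatibility between the trace pairing and the de Rham pairing---is exactly the paper's strategy, and the diagram chase you describe is the right picture. However, the way you organise the middle of the argument introduces an unnecessary and nontrivial detour.

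The paper does not construct a Chern character $K_0(C_u^\ast(M)) \to \HudR_\ast(M)$, and it does not need one. What it proves instead are two short lemmas placed immediately before the corollary: first, the direct compatibility
\[
\langle [x],[y]\rangle_\theta \;=\; \big\langle \ch([x]),\,(\alpha_\ast\circ\ch^\ast)([y])\big\rangle_{(M_i)_i,\tau}
\]
for $[x]\in K_u^p(M)$ and $[y]\in K_p^u(M)$; second, the cup/cap compatibility
\[
\langle[\beta]\wedge[\gamma],[C]\rangle_{(M_i)_i,\tau} \;=\; \langle[\beta],[\gamma]\cap[C]\rangle_{(M_i)_i,\tau}.
\]
Given these, one applies Theorem~\ref{thm:local_thm_pseudos} to identify $(\alpha_\ast\circ\ch^\ast)([P])$ with $\ind(P)\cap[M]$ under \Poincare duality, and the corollary follows in one line. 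Your ``main obstacle'' (commutativity of the right-hand square) is precisely the first of these two lemmas; once you have it there is no reason to route the argument through $K_0(C_u^\ast(M))$ or to ask for a cyclic cocycle on a smooth subalgebra of the uniform Roe algebra. That step, as you phrase it, would require building a dense holomorphically closed subalgebra of $C_u^\ast(M)$ with a well-behaved Chern--Connes character, which the paper never does and which is not needed for this corollary.
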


Roe's theorem \cite{roe_index_1} is the special case where $P = D$ is a graded (i.e., $p = 0$) Dirac operator and $u = [\IC]$ is the class in $K_u^0(M)$ of the trivial, $1$-dimensional vector bundle over $M$.

To put the above index theorems into context, let us consider manifolds with cylindrical ends. These are the kind of non-compact manifolds which are studied to prove for example the Atiyah--Patodi--Singer index theorem. In the setting of this paper, the relevant algebra would be that of bounded functions with bounded derivatives, whereas in papers like \cite{melrose_eta} or \cite{moroianu_nistor} one imposes conditions at infinity like rapid decay of the integral kernels (see the definition of the suspended algebra in \cite[Section~1]{melrose_eta}).

Note that this global index theorem arising from a \Folner sequence is just a special case of a certain rough index theory, where one pairs classes from the so-called rough cohomology with classes in the $K$-theory of the uniform Roe algebra, and \Folner sequences give naturally classes in this rough cohomology. For details see the thesis \cite{mavra} of Mavra. It seems that it should be possible to combine the above local index theorem with this rough index theory, since it is possible in the special case of \Folner sequences. The author investigated this in \cite{engel_rough}.

Let us say a few words about the proofs of the above index theorems for elliptic uniform pseudodifferential operators. Roe used in \cite{roe_index_1} the heat kernel method to prove his index theorem for amenable manifolds and therefore, since the heat kernel method does only work for Dirac operators, it can not encompass uniform pseudodifferential operators. So what we will basically do in this paper is to set up all the necessary theory in order to be able to reduce the index problem from pseudodifferential operators to Dirac operators.

The main ingredient is a version of \Poincare duality between uniform $K$-homology and uniform $K$-theory, which was proved by the author in \cite[Section~4.4]{engel_indices_UPDO}. With this at our disposal we will then be able to reduce the index problem for elliptic uniform pseudodifferential operators to Dirac operators by proving a uniform version of the Thom isomorphism in order to conclude that symbol classes of elliptic uniform pseudodifferential operators may be represented by symbol classes of Dirac operators. So it remains to show the local index theorem for Dirac operators, but since up to this point we will already have set up all the needed machinery, this proof will be basically the same as the proof of the local index theorem of Connes and Moscovici in \cite{connes_moscovici}.

The last collection of results that we want to highlight in this introduction are all the various (duality) isomorphisms proved in this paper.

\begin{thmintro}[Theorems~\ref{thm23w2}, \ref{thm:computation_cyclic_cohomology} and \ref{thm:Poincare_de_Rham}]
Let $M$ be an $m$-dimensional manifold of bounded geometry and no boundary. Then the Chern characters induce linear, continuous isomorphisms
\[K^\ast_u(M) \barotimes \IC \cong \HbdR^\ast(M) \text{ and } K^u_\ast(M) \barotimes \IC \cong \HudR_\ast(M),\]
and we also have the isomorphism
\[\HPucont^{*}(\Winftyone(M)) \cong \HudR_{*}(M).\]
If $M$ is oriented we further have the isomorphism
\[\HbdR^\ast(M) \cong \HudR_{m-\ast}(M).\]
\end{thmintro}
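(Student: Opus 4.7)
The statement bundles four isomorphisms. The plan is to establish the continuous periodic cyclic cohomology computation $\HPucont^*(\Winftyone(M)) \cong \HudR_*(M)$ first, then the Poincar\'{e} duality $\HbdR^\ast(M) \cong \HudR_{m-\ast}(M)$ in the oriented case, and finally derive the two Chern character isomorphisms from these together with the usual Chern-Weil constructions.

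For the periodic cyclic computation I would model $\Winftyone(M)$ as a Banach algebra of smooth functions whose derivatives satisfy a uniform $W^{\infty,1}$ condition, chosen precisely so that jointly continuous multilinear cochains correspond under antisymmetrization to uniform currents on $M$. I would then run a continuous Hochschild-Kostant-Rosenberg argument: the map $\varphi \mapsto \bigl((f_0,\ldots,f_p) \mapsto \varphi(f_0\, df_1 \wedge \cdots \wedge df_p)\bigr)$ is a continuous chain map from Hochschild cochains to uniform currents, and a homotopy inverse is built from a bounded-geometry smoothing operator and shuffle formulas. Passing from the Hochschild identification to the periodic cyclic one then uses Connes' $SBI$-sequence, producing the required isomorphism with $\HudR_*(M)$.

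For Poincar\'{e} duality on an oriented $M$ I would define the map by integration, sending a bounded closed form $\omega$ to the current $\eta \mapsto \int_M \omega \wedge \eta$, which is well defined on compactly supported uniform test forms because $\omega$ has uniformly bounded pointwise norm. To see this induces an isomorphism I would run a Mayer-Vietoris induction along a uniformly locally finite good cover by metric balls of fixed radius (available under bounded geometry), with base case $\IR^m$ settled by the classical Poincar\'{e} lemma and inductive step via the five lemma once compatibility of the pairing with the connecting maps is verified.

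With these in hand, the two Chern character isomorphisms follow. On $K_u^*(M)$ I would define $\ch$ by the standard Chern-Weil formula applied to idempotent representatives in matrices over $C^\infty_u(M)$, using uniform bounds on the idempotent and its derivatives to keep the resulting form in $\HbdR^\ast(M)$. Dually, $\ch^*$ on $K_*^u(M)$ sends a finitely summable uniform Fredholm module to its Connes character in $\HPucont^*(\Winftyone(M))$, which was identified with $\HudR_*(M)$ above. That both maps become isomorphisms after $\barotimes\,\IC$ I would prove by another bounded-geometry Mayer-Vietoris reduction to the Euclidean model, where the assertion is the classical Chern character isomorphism. The main obstacle will be the first step: unlike the compact case, the HKR argument must respect a Banach-algebraic uniform structure, which forces one to construct resolutions and homotopies with explicit bounded-geometry estimates and to verify continuity of both the Connes $B$ operator and the periodicity map $S$ in the chosen topology on $\Winftyone(M)$.
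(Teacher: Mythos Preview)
Your outline follows the paper's overall architecture (HKR-type argument for the cyclic computation, Mayer--Vietoris for Poincar\'e duality and for the Chern isomorphisms), but there are two genuine gaps.

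\textbf{The completed tensor product $\barotimes\,\IC$.} You treat the Chern isomorphisms as a routine Mayer--Vietoris reduction to the Euclidean case after tensoring with $\IC$. This is exactly where the paper's real work lies. At the base of the induction one has a uniformly discrete set $Y$; there the uniform $K$-group is $\ell^\infty_{\IZ}(Y)$ while the de Rham side is $\ell^\infty(Y)$, and $\ell^\infty_{\IZ}(Y)\otimes\IC \not\cong \ell^\infty(Y)$. One only gets the isomorphism after completion, which is why the statement uses $\barotimes$ and not $\otimes$. Having introduced a completion, you then face the problem that completing a sequence of topological vector spaces does \emph{not} preserve algebraic exactness in general: one has $\overline{\image\varphi}\supset\image\overline{\varphi}$ but not equality. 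The paper resolves this by showing (its Lemma on exactness of $G\mapsto G\barotimes\IC$) that if the original sequence is \emph{topologically} exact, i.e., each $\varphi_i$ induces a homeomorphism $G_i/\kernel\varphi_i\cong\image\varphi_i$, then the completed sequence is again exact; and it then verifies that the Mayer--Vietoris sequences in question admit continuous splittings (coming from partitions of unity with uniformly bounded derivatives, for which one needs the special colored cover of Lemma~\ref{lem:suitable_coloring_cover_M}, not just an arbitrary good cover by balls). None of this appears in your sketch, and without it the five-lemma step of your Mayer--Vietoris induction does not go through.

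\textbf{The Hochschild step.} Your proposed homotopy inverse ``built from a bounded-geometry smoothing operator and shuffle formulas'' does not match the mechanism in Connes' proof, which the paper adapts: the contracting homotopy for the Koszul-type resolution uses contraction by a nowhere-vanishing vector field. On a noncompact manifold of bounded geometry, obtaining such a field of norm one with \emph{bounded derivatives} is nontrivial; by Weinberger's theorem it exists iff the bounded Euler class vanishes, and otherwise one passes to $M\times S^1$. You also need the identification $\Winftyone(M)\hatotimes\Winftyone(M)\cong\Winftyone(M\times M)$ (the paper's Lemma~\ref{lem:tensor_prod_sobolev}) and the fact that vector bundles of bounded geometry are $C_b^\infty$-complemented in order to know the relevant modules are topologically projective. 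These are the places where bounded geometry genuinely enters, and your sketch does not supply them.
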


If $M$ is \spinc then we have the \Poincare duality isomorphism $K_u^\ast(M) \cong K^\ast_{m-\ast}(M)$, which is proved in \cite[Theorem 4.29]{engel_indices_UPDO}.

\paragraph{Acknowledgements} This article contains mostly Section~5 of the preprint \cite{engel_indices_UPDO} which is being split up for easier publication. It arose out of the Ph.D.\ thesis \cite{engel_phd} of the author written at the University of Augsburg.

\section{Review of needed material}

In this section we review the needed material from the literature. We start with the notion of bounded geometry for Riemannian manifolds, define Sobolev spaces and discuss the Sobolev embedding theorem, and at the end of Section~\ref{sec_bounded_geom} we prove the technical Lemma \ref{lem:suitable_coloring_cover_M} about constructing covers with certain properties on manifolds of bounded geometry. In Section~\ref{sec_UPDOs} we discuss the calculus of uniform pseudodifferential operators that we will use in this paper, and in Section~\ref{sec_uniform_K} we recall the basic facts about uniform $K$-homology and uniform $K$-theory.

\subsection{Manifolds of bounded geometry}
\label{sec_bounded_geom}

We will recall in this section the notion of bounded geometry for manifolds and for vector bundles and discuss basic facts about uniform $C^r$-spaces and Sobolev spaces on them. Almost all material presented here is already known, and we tried to give proper credits wherever possible. As a genuine reference one might also use Eldering \cite[Chapter 2]{eldering_book}.

\begin{defn}
We say that a Riemannian manifold $M$ has \emph{bounded geometry}, if
\begin{itemize}
\item the curvature tensor and all its derivatives are bounded, i.e.,
\[\sup_{x \in M} \| \nabla^k \Rm (x) \| < \infty\]
for all $k \in \IN_0$, and
\item the injectivity radius is uniformly positive, i.e.,
\[\inf_{x \in M} \injrad_M(x) > 0.\]
\end{itemize}
If $E \to M$ is a vector bundle with a metric and compatible connection, then \emph{$E$ has bounded geometry}, if the curvature tensor of $E$ and all its derivatives are bounded.
\end{defn}

\begin{examples}
The most important examples of manifolds of bounded geometry are coverings of closed Riemannian manifolds equipped with the pull-back metric, homogeneous manifolds with an invariant metric, and leafs in a foliation of a compact Riemannian manifold (Greene \cite[lemma on page 91 and the paragraph thereafter]{greene}).

For vector bundles, the most important examples are of course again pull-back bundles of bundles over closed manifolds equipped with the pull-back metric and connection, and the tangent bundle of a manifold of bounded geometry.
\end{examples}

We now state an important characterization in local coordinates of bounded geometry since it allows one to show that certain local definitions are independent of the chosen normal coordinates.

\begin{lem}[{\cite[Appendix A1.1]{shubin}}]\label{lem:transition_functions_uniformly_bounded}
Let the injectivity radius of $M$ be positive.

Then the curvature tensor of $M$ and all its derivatives are bounded if and only if for any $0 < r < \injrad_M$ all the transition functions between overlapping normal coordinate charts of radius $r$ are uniformly bounded, as are all their derivatives (i.e., the bounds can be chosen to be the same for all transition functions).
\end{lem}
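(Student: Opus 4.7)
My plan is to reduce both directions to uniform $C^k$-bounds on the exponential map $\exp_x : B_r(0) \subset T_x M \to M$ (with $0 < r < \injrad_M$ fixed, and $T_x M$ identified with $\IR^m$ via an orthonormal frame), since the transition function between two overlapping normal charts centered at $x_1, x_2$ is precisely $\exp_{x_2}^{-1} \circ \exp_{x_1}$. The assertion of the lemma thus becomes: curvature and all its covariant derivatives are bounded uniformly in $x$ if and only if $\exp_x$ and $\exp_x^{-1}$ are uniformly bounded, along with all their partial derivatives, in $x$.

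For the forward direction I would use Jacobi field analysis. For $v \in B_r(0)$ and $w \in T_x M$, the differential $(d\exp_x)_v(w)$ equals $J(1)$, where $J$ is the Jacobi field along $\gamma(t)=\exp_x(tv)$ with $J(0)=0$ and $\tfrac{D}{dt}J(0)=w$. Since $J$ satisfies $\tfrac{D^2}{dt^2}J + R(\gamma',J)\gamma' = 0$, a uniform bound on $\Rm$ gives, via a Grönwall-type estimate on $[0,1]$, a uniform bound on $J$ and hence on $d\exp_x$. Differentiating the Jacobi equation further in $v$ produces ODEs whose coefficients involve $\nabla^j \Rm$ for $j$ up to some order depending on the number of derivatives taken; the uniform bounds on all $\nabla^j\Rm$ therefore yield, inductively, uniform $C^k$-bounds on $\exp_x$ for every $k$. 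Combined with a uniform lower bound on $|\det d\exp_x|$ on $B_r(0)$, obtained from the same ODE analysis together with the fact that $\exp_x$ is a diffeomorphism on $B_{\injrad_M}(0)$, this gives uniform $C^k$-bounds on $\exp_x^{-1}$ as well, hence on every transition function $\exp_{x_2}^{-1}\circ \exp_{x_1}$.

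For the converse I would fix an arbitrary $x \in M$ and $y \in B_r(0) \subset T_xM$, set $x' := \exp_x(y)$, and consider the transition function $\varphi := \exp_{x'}^{-1} \circ \exp_x$, which sends $y$ to $0$. At the origin of the chart at $x'$ the metric equals $\delta_{kl}$ since the frame is orthonormal, so the transformation law
\[
g^{(x)}_{ij}(y) = \sum_{k,l} \delta_{kl}\, \partial_i \varphi^k(y)\, \partial_j \varphi^l(y),
\]
together with its iterated partial derivatives, expresses the metric components at $y$ in the chart at $x$, and all their partial derivatives there, as universal polynomials in the derivatives of $\varphi$. By hypothesis these are uniformly bounded in $x$ and $y$, so the metric in every normal chart of radius $r$ is uniformly bounded in every $C^k$-norm. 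Since the curvature tensor and its covariant derivatives are universal polynomials in $g_{ij}$, the inverse $g^{ij}$, and their partial derivatives, uniform boundedness of the curvature and all its covariant derivatives follows.

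The main obstacle is controlling the inverse metric $g^{ij}$ uniformly in the converse direction, which amounts to bounding $|(d\varphi)^{-1}|$ uniformly; this I would obtain by observing that $\varphi^{-1} = \exp_x^{-1}\circ \exp_{x'}$ is itself a transition function between normal charts of radius $r$, so the hypothesis applied to $\varphi^{-1}$ provides the required upper bound on $|(d\varphi)^{-1}(y)|=|d\varphi^{-1}(0)|$. A secondary technical nuisance is the bookkeeping in the forward direction when relating covariant derivatives $\nabla^j\Rm$ to partial derivatives of $g_{ij}$ in normal coordinates: one must carry along Christoffel terms and argue by induction on the order of differentiation, but all bounds are polynomial in the curvature data and its derivatives, so the induction closes.
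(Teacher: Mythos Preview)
The paper does not supply its own proof of this lemma; it simply cites Shubin's appendix, so there is nothing in the paper to compare against directly. Your forward direction is the standard approach and is correct as sketched: Jacobi-field estimates control the intrinsic derivatives of $\exp_x$ once all $\nabla^j\Rm$ are bounded, and together with a lower bound on $|\det d\exp_x|$ on $B_r(0)$ this yields uniform $C^k$-bounds on $\exp_x^{-1}$ and hence on every transition function $\exp_{x_2}^{-1}\circ\exp_{x_1}$.

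The converse, however, has a genuine gap. Your displayed identity
\[
g^{(x)}_{ij}(y)=\sum_k\partial_i\varphi^k(y)\,\partial_j\varphi^k(y)
\]
is only a \emph{pointwise} statement: it holds at the single point $y$ because you chose $x'=\exp_x(y)$, so that $\varphi(y)=0$ and $g^{(x')}_{kl}(0)=\delta_{kl}$. The transformation law valid in a neighbourhood of $y$, with $x'$ held \emph{fixed}, reads
\[
g^{(x)}_{ij}(y')=\sum_{k,l}g^{(x')}_{kl}\bigl(\varphi(y')\bigr)\,\partial_i\varphi^k(y')\,\partial_j\varphi^l(y'),
\]
and differentiating this twice at $y'=y$ produces, besides the derivatives of $\varphi$ you recorded, the term $\sum\partial_a\partial_b g^{(x')}_{kl}(0)\,\partial_m\varphi^a\partial_n\varphi^b\partial_i\varphi^k\partial_j\varphi^l$. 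The quantities $\partial_a\partial_b g^{(x')}_{kl}(0)$ are, up to universal constants, the components of $\Rm(x')$; for higher derivatives the analogous extra terms involve $\nabla^j\Rm(x')$. Thus your formula for $\partial^{\ge 2} g^{(x)}(y)$ already presupposes the curvature bounds you are trying to establish, and the argument is circular from the second derivative onward. Put differently: the hypothesis controls the derivatives of each transition function $\varphi$ with respect to its \emph{argument}, but not the dependence of $\varphi_y=\exp_{\exp_x(y)}^{-1}\circ\exp_x$ on the \emph{centre} $\exp_x(y)$, and it is precisely that dependence you would need to differentiate your pointwise identity legitimately. Breaking this circularity requires an additional mechanism (for instance an ODE argument along radial lines exploiting the normal-coordinate constraints, as in Shubin), not just the raw transformation law.
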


Another fact which we will need about manifolds of bounded geometry is the existence of uniform covers by normal coordinate charts and corresponding partitions of unity. A proof may be found in, e.g., \cite[Appendix A1.1]{shubin} (Shubin addresses the first statement about the existence of such covers actually to the paper \cite{gromov_curvature_diameter_betti_numbers} of Gromov).

\begin{lem}\label{lem:nice_coverings_partitions_of_unity}
Let $M$ be a manifold of bounded geometry.

For every $0 < \varepsilon < \tfrac{\injrad_M}{3}$ exists a cover of $M$ by normal coordinate charts of radius $\varepsilon$ with the properties that the midpoints of the charts form a uniformly discrete set and that the coordinate charts with double radius $2\varepsilon$ form a uniformly locally finite cover of $M$.

Furthermore, there is a subordinate partition of unity $1 = \sum_i \varphi_i$ with $\supp \varphi_i \subset B_{2\varepsilon}(x_i)$, such that in normal coordinates the functions $\varphi_i$ and all their derivatives are uniformly bounded (i.e., the bounds do not depend on $i$).
\end{lem}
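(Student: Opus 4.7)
The plan is to produce the cover by a standard greedy/Zorn construction and then build the partition of unity by pulling back a fixed Euclidean bump function through the exponential map, using bounded geometry both to control the combinatorics of the cover and to control derivatives after renormalization.

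First I would fix $0 < \varepsilon < \injrad_M/3$ and pick a maximal $\varepsilon$-separated set $\{x_i\}_{i \in I} \subset M$ (existence by Zorn's lemma, or by a transfinite greedy argument). Maximality gives $M = \bigcup_i B_\varepsilon(x_i)$, and $\varepsilon$-separation gives uniform discreteness of the midpoints. For uniform local finiteness of the doubled cover $\{B_{2\varepsilon}(x_i)\}$, the key observation is that under bounded geometry one has two-sided uniform volume bounds on balls of radius $\le 3\varepsilon$ (via Bishop--Gromov comparison together with the uniform lower bound on the injectivity radius and the uniform bound on $\Rm$). Hence for any $y \in M$, the balls $B_{\varepsilon/2}(x_i)$ with $x_i \in B_{2\varepsilon}(y)$ are pairwise disjoint and contained in $B_{5\varepsilon/2}(y)$, so their cardinality is bounded by $\vol B_{5\varepsilon/2}(y) / \inf_i \vol B_{\varepsilon/2}(x_i)$, a constant independent of $y$.

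Next I would construct the partition of unity. Fix once and for all a radial bump function $\psi \in C^\infty(\IR^m)$ with $0 \le \psi \le 1$, $\psi \equiv 1$ on $\overline{B_\varepsilon(0)}$, $\supp \psi \subset B_{2\varepsilon}(0)$, and set $\psi_i := \psi \circ \exp_{x_i}^{-1}$ on $B_{2\varepsilon}(x_i)$, extended by $0$ elsewhere. Expressed in its own normal chart centered at $x_i$, each $\psi_i$ is just $\psi$, hence has derivatives bounded uniformly in $i$; to see the same in any other normal chart centered at $x_j$ with $B_{2\varepsilon}(x_i) \cap B_{2\varepsilon}(x_j) \ne \emptyset$ I would invoke Lemma~\ref{lem:transition_functions_uniformly_bounded}, which says the transition maps between overlapping normal charts and all their derivatives are uniformly bounded. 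Define
\[
\Psi := \sum_{i \in I} \psi_i, \qquad \varphi_i := \psi_i / \Psi.
\]
The sum is locally finite by the uniform local finiteness established above, and $\Psi \ge 1$ everywhere because each point lies in some $B_\varepsilon(x_j)$ on which $\psi_j \equiv 1$.

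The last and slightly delicate step is to show that the renormalized functions $\varphi_i$ still have uniformly bounded derivatives in every chart. In a normal chart at any $x_j$, only the summands with $x_i$ in a fixed-radius neighborhood of $x_j$ contribute; by the combinatorial bound above there are at most $N$ such summands with $N$ independent of $j$, and each of them has $C^k$-norm bounded by a constant depending only on $k$ (again by the transition-function lemma). Hence $\Psi$ has $C^k$-norm bounded uniformly in the chart; combined with the uniform lower bound $\Psi \ge 1$, the quotient rule gives uniform $C^k$-bounds for $\varphi_i = \psi_i/\Psi$.

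The main obstacle I anticipate is keeping the ``uniform in $i$'' bookkeeping clean across the change of charts: bounds for $\psi_i$ are immediate in its own chart, and the whole point of invoking the bounded-geometry characterization via transition functions is to propagate those bounds to any overlapping chart with constants independent of the pair $(i,j)$. Once this is granted, the volume-comparison-based finiteness count and the quotient-rule estimate are routine.
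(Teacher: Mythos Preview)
Your argument is correct and is the standard one: a maximal $\varepsilon$-separated set gives the cover, Bishop--Gromov volume comparison (using the curvature bound and the positive injectivity radius) gives the uniform multiplicity bound, and pulling back a fixed Euclidean bump through $\exp_{x_i}^{-1}$ together with Lemma~\ref{lem:transition_functions_uniformly_bounded} and the quotient rule handles the partition of unity with uniform $C^k$-bounds.

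The paper, however, does not actually prove this lemma; it only states it and refers the reader to \cite[Appendix~A1.1]{shubin} (attributing the covering statement to Gromov). So there is no in-paper proof to compare against. Your write-up is precisely the kind of argument one finds in Shubin's appendix, so there is nothing to add beyond noting that you have supplied what the paper outsourced to a reference.
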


If the manifold $M$ has bounded geometry, we have analogous equivalent local characterizations of bounded geometry for vector bundles as for manifolds. The equivalence of the first two bullet points in the next lemma is stated in, e.g., \cite[Proposition~2.5]{roe_index_1}. Concerning the third bullet point, the author could not find any citable reference in the literature (though both Shubin \cite{shubin} and Eldering \cite{eldering_book} use this as the definition).

\begin{lem}\label{lem:equiv_characterizations_bounded_geom_bundles}
Let $M$ be a manifold of bounded geometry and $E \to M$ a vector bundle. Then the following are equivalent:

\begin{itemize}
\item $E$ has bounded geometry,
\item the Christoffel symbols $\Gamma_{i \alpha}^\beta(y)$ of $E$ with respect to synchronous framings (considered as functions on the domain $B$ of normal coordinates at all points) are bounded, as are all their derivatives, and this bounds are independent of $x \in M$, $y \in \exp_x(B)$ and $i, \alpha, \beta$, and
\item the matrix transition functions between overlapping synchronous framings are uniformly bounded, as are all their derivatives (i.e., the bounds are the same for all transition functions).
\end{itemize}
\end{lem}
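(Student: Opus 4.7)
The plan is to prove the three-way equivalence by establishing the two equivalences $(1)\Leftrightarrow(2)$ and $(2)\Leftrightarrow(3)$ separately, since the relations between curvature, Christoffel symbols, and transition functions are all local and will be handled by the same gauge-theoretic identities.

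For the equivalence $(1)\Leftrightarrow(2)$, I would exploit the fact that a synchronous framing at $x \in M$, being parallel along every radial geodesic through $x$, is the direct bundle-analogue of the radial (Fock--Schwinger) gauge, characterized by the identity $y^i \Gamma_{i\alpha}^\beta(y) = 0$ on the domain of the normal chart. Combining this with the definition of the curvature $F = d\Gamma + \Gamma \wedge \Gamma$ gives the explicit integral representation
\[ \Gamma_{j\alpha}^\beta(y) = \int_0^1 s \, y^i R_{ij\alpha}^{\ \ \ \beta}(sy)\, ds, \]
so that the ordinary derivatives of $\Gamma$ at $y$ are expressed as finite linear combinations of ordinary derivatives of the curvature components evaluated on the segment from $0$ to $y$. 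Since in a synchronous framing the covariant derivatives of $R$ differ from ordinary derivatives only by polynomials in $\Gamma$ and its derivatives, the two boundedness conditions are equivalent. Conversely, plugging bounded $\Gamma$ (with all ordinary derivatives) into $F = d\Gamma + \Gamma \wedge \Gamma$ immediately bounds $R$ and its covariant derivatives, uniformly in $x \in M$.

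For the equivalence $(2)\Leftrightarrow(3)$, I would use the transformation law of connection coefficients under a change of framing: if $e^{(1)}$ and $e^{(2)}$ are synchronous framings at $x_1$ and $x_2$ with overlapping normal-coordinate domains, and $g$ is the transition matrix, then
\[ \Gamma^{(2)} = g^{-1}\Gamma^{(1)} g + g^{-1} dg. \]
For $(2) \Rightarrow (3)$, given any two such framings, $g$ satisfies a matrix ODE along the radial line from the midpoint of one chart whose coefficients and their derivatives are controlled by the bounds on $\Gamma^{(1)}$ and $\Gamma^{(2)}$; Gr\"onwall-type estimates together with repeated differentiation of this ODE then yield uniform bounds on $g$ and all its derivatives. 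For $(3) \Rightarrow (2)$, fix $x_1$ and an arbitrary point $y$ in its chart, take $e^{(2)}$ to be the synchronous framing centered at $y$ itself (so that $\Gamma^{(2)}(y) = 0$), and read off
\[ \Gamma^{(1)}(y) = -(dg) g^{-1}\big|_{y}. \]
Uniform bounds on $g$ and its derivatives therefore translate directly into uniform bounds on $\Gamma^{(1)}$ and its derivatives at $y$, uniformly in $x_1$ and $y$. To differentiate this relation to higher order one uses the bounded-geometry assumption on $M$ via Lemma~\ref{lem:transition_functions_uniformly_bounded} to control how the normal coordinates at different base points relate.

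The main obstacle is the bookkeeping in $(3) \Rightarrow (2)$: extracting uniform bounds on \emph{all} derivatives of $\Gamma^{(1)}$ requires varying the reference point $x_2 = y$ of the second synchronous framing, and one has to argue that the resulting family of transition functions, parametrized by the second base point, is uniformly $C^\infty$-bounded. This is where the hypothesis on $M$ feeds in: the uniform bounds on transition functions between overlapping normal charts of $M$ provided by Lemma~\ref{lem:transition_functions_uniformly_bounded} ensure that differentiating with respect to $y$ does not introduce additional uncontrolled terms. Apart from this, the proof reduces to the explicit gauge-theoretic identities above together with standard ODE estimates.
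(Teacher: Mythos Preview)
The paper does not actually prove this lemma: it simply records that the equivalence of the first two bullet points is stated in Roe~\cite[Proposition~2.5]{roe_index_1}, and remarks that the third characterization is used as a \emph{definition} by Shubin and Eldering, without supplying an argument for its equivalence with the other two. So there is no proof in the paper to compare against; one can only assess your outline on its own merits.

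Your treatment of $(1)\Leftrightarrow(2)$ via the radial-gauge integral formula, and of $(2)\Rightarrow(3)$ via the transformation law $dg = g\Gamma^{(2)} - \Gamma^{(1)}g$ together with the fact that $g$ is unitary, is correct and standard.

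The direction $(3)\Rightarrow(2)$, however, has a real gap, and it is precisely in the step you flag as the ``main obstacle''. You assert that the family of transition functions $y\mapsto g_{x_1,y}$ is uniformly $C^\infty$-bounded jointly in the base point $y$ and the evaluation point, and that this follows from hypothesis~(3) together with Lemma~\ref{lem:transition_functions_uniformly_bounded}. It does not. Hypothesis~(3) bounds only the $w$-derivatives of $g_{x_1,x_2}(w)$ for each fixed pair $(x_1,x_2)$; it says nothing about derivatives in the base-point parameter. A one-dimensional line-bundle example makes this explicit: with $\nabla=d+iA\,dx$ on $\mathbb R\times\mathbb C$ the transition functions $g_{y,y'}(w)=\exp\big(i\int_{y}^{y'}A\big)$ are constant in $w$, so (3) holds trivially regardless of $A$, yet $\partial_y g_{y,y'}=-iA(y)\,g$ is unbounded whenever $A$ is. Thus ``family smoothness'' is strictly stronger than~(3), and Lemma~\ref{lem:transition_functions_uniformly_bounded} (which concerns only the base manifold) cannot supply the missing control. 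If instead you try to avoid varying the base point and differentiate the relation $\Gamma^{(1)}=g\Gamma^{(2)}g^{-1}-(dg)g^{-1}$ in $w$ and evaluate at the centre $y$ of chart~2, you pick up the term $g(\partial\Gamma^{(2)})(y)g^{-1}=\tfrac12\,R(y)$, and since $R=d\Gamma+\Gamma\wedge\Gamma$ this reintroduces $\partial\Gamma^{(1)}$: the resulting linear system only bounds the symmetric part of $\partial\Gamma$, not the antisymmetric (curvature) part.

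A route that does close is to bypass~(2) and prove $(3)\Rightarrow(1)$ directly via holonomy: $g_{x_0,x_1}(w)\,g_{x_0,x_1}(x_0)^{-1}$ is the holonomy around the geodesic triangle with vertices $x_0$, $w$, $x_1$, so its first $w$-derivative is of size $|x_0-x_1|\cdot|R|$ near the triangle. Choosing $|x_0-x_1|$ comparable to the chart radius, the uniform bound on $\partial_w g$ from~(3) forces $|R|$ to be uniformly bounded; iterating with higher $w$-derivatives controls the covariant derivatives of~$R$.
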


We will now give the definition of uniform $C^\infty$-spaces together with a local characterization on manifolds of bounded geometry. The interested reader is refered to, e.g., the papers \cite[Section 2]{roe_index_1} or \cite[Appendix A1.1]{shubin} of Roe and Shubin for more information regarding these uniform $C^\infty$-spaces.

\begin{defn}[$C^r$-bounded functions]
Let $f \in C^\infty(M)$. We say that $f$ is a \emph{$C_b^r$-function}, or equivalently that it is \emph{$C^r$-bounded}, if $\| \nabla^i f \|_\infty < C_i$ for all $0 \le i \le r$.
\end{defn}

If $M$ has bounded geometry, being $C^r$-bounded is equivalent to the statement that in every normal coordinate chart $|\partial^\alpha f(y)| < C_\alpha$ for every multiindex $\alpha$ with $|\alpha| \le r$ (where the constants $C_\alpha$ are independent of the chart).

The definition of $C^r$-boundedness and its equivalent characterization in normal coordinate charts for manifolds of bounded geometry make also sense for sections of vector bundles of bounded geometry.

\begin{defn}[Uniform $C^\infty$-spaces]
\label{defn:uniform_frechet_spaces}
Let $E$ be a vector bundle of bounded geometry over $M$. We will denote the \emph{uniform $C^r$-space} of all $C^r$-bounded sections of $E$ by $C_b^r(E)$.

Furthermore, we define the \emph{uniform $C^\infty$-space $C_b^\infty(E)$}
\[C_b^\infty(E) := \bigcap_r C_b^r(E)\]
which is a \Frechet space.
\end{defn}

Now we get to Sobolev spaces on manifolds of bounded geometry. Much of the following material is from \cite[Appendix A1.1]{shubin} and \cite[Section 2]{roe_index_1}, where an interested reader can find more thorough discussions of this matters.

Let $s \in C^\infty_c(E)$ be a compactly supported, smooth section of some vector bundle $E \to M$ with metric and connection $\nabla$. For $k \in \IN_0$ and $p \in [1,\infty)$ we define the global $W^{k,p}$-Sobolev norm of $s$ by
\begin{equation}\label{eq:sobolev_norm}
\|s\|_{W^{k,p}}^p := \sum_{i=0}^k \int_M \|\nabla^i s(x)\|^p dx.
\end{equation}

\begin{defn}[Sobolev spaces $W^{k,p}(E)$]\label{defn:sobolev_spaces}
Let $E$ be a vector bundle which is equipped with a metric and a connection. The \emph{$W^{k,p}$-Sobolev space of $E$} is the completion of $C^\infty_c(E)$ in the norm $\|-\|_{W^{k,p}}$ and will be denoted by $W^{k,p}(E)$.
\end{defn}

If $E$ and $M^m$ both have bounded geometry than the Sobolev norm \eqref{eq:sobolev_norm} for $1 < p < \infty$ is equivalent to the local one given by
\begin{equation}\label{eq:sobolev_norm_local}
\|s\|_{W^{k,p}}^p \stackrel{\text{equiv}}= \sum_{i=1}^\infty \|\varphi_i s\|^p_{W^{k,p}(B_{2\varepsilon}(x_i))},
\end{equation}
where the balls $B_{2\varepsilon}(x_i)$ and the subordinate partition of unity $\varphi_i$ are as in Lemma \ref{lem:nice_coverings_partitions_of_unity}, we have chosen synchronous framings and $\|-\|_{W^{k,p}(B_{2\varepsilon}(x_i))}$ denotes the usual Sobolev norm on $B_{2\varepsilon}(x_i) \subset \IR^m$. This equivalence enables us to define the Sobolev norms for all $k \in \IR$, see Triebel \cite{triebel_2} and Gro{\ss}e--Schneider \cite{grosse_sobolev}. There are some issues in the case $p=1$, see the discussion by Triebel \cite[Section~2.2.3]{triebel_1}, \cite[Remark~4 on Page~13]{triebel_2}.

Assuming bounded geometry, the usual embedding theorems are true:

\begin{thm}[{\cite[Theorem 2.21]{aubin_nonlinear_problems}}]\label{thm:sobolev_embedding}
Let $E$ be a vector bundle of bounded geometry over a manifold $M^m$ of bounded geometry and without boundary.

Then we have for all values $(k-r)/m > 1/p$ continuous embeddings
\[W^{k,p}(E) \subset C^r_b(E).\]
\end{thm}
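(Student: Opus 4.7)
The plan is to reduce the embedding to the classical Euclidean Sobolev embedding theorem on balls of a fixed radius, using the uniform structure supplied by bounded geometry. The inputs are Lemma \ref{lem:nice_coverings_partitions_of_unity} (uniform cover with a uniformly $C^\infty$-bounded partition of unity), Lemma \ref{lem:equiv_characterizations_bounded_geom_bundles} (uniformly bounded Christoffel symbols and bundle transition functions in synchronous framings), and the local equivalence \eqref{eq:sobolev_norm_local} of the Sobolev norms.

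Concretely, I would fix $0 < \varepsilon < \injrad_M/3$ and take the cover $\{B_{2\varepsilon}(x_i)\}_i$ with subordinate partition of unity $\{\varphi_i\}$ from Lemma \ref{lem:nice_coverings_partitions_of_unity}. In normal coordinates together with synchronous framings, each ball $B_{2\varepsilon}(x_i)$ becomes the flat Euclidean ball $B_{2\varepsilon}(0) \subset \IR^m$ with a trivialized fibre. By Lemmas \ref{lem:transition_functions_uniformly_bounded} and \ref{lem:equiv_characterizations_bounded_geom_bundles}, the metric coefficients, the Christoffel symbols of $E$, and all their derivatives are bounded uniformly in $i$. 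Consequently, the intrinsic $W^{k,p}$-norm on $B_{2\varepsilon}(x_i)$ is equivalent, with constants independent of $i$, to the ordinary Euclidean $W^{k,p}$-norm of the coordinate representatives, and similarly for the $C^r$-norms.

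Next I would apply the classical Sobolev embedding theorem on the fixed bounded domain $B_{2\varepsilon}(0) \subset \IR^m$ (via, e.g., a Stein extension operator), which in the range $(k-r)/m > 1/p$ yields a continuous inclusion $W^{k,p}(B_{2\varepsilon}(0)) \hookrightarrow C^r(B_{2\varepsilon}(0))$ with some constant $C$. Since the $\varphi_i$ are uniformly $C^\infty$-bounded, the Leibniz rule gives a uniform constant $C'$ with $\|\varphi_i s\|_{W^{k,p}(B_{2\varepsilon}(x_i))} \le C' \|s\|_{W^{k,p}(B_{2\varepsilon}(x_i))}$. Combining these two steps yields
\[\|\varphi_i s\|_{C^r(B_{2\varepsilon}(x_i))} \le C C' \|s\|_{W^{k,p}(B_{2\varepsilon}(x_i))} \le C C' \|s\|_{W^{k,p}(E)},\]
uniformly in $i$.

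Finally, because the cover is uniformly locally finite, say with a bound $N$ on the number of charts meeting any point, the pointwise decomposition $s = \sum_i \varphi_i s$ gives for every $0 \le j \le r$ and every $x \in M$ the estimate $\|\nabla^j s(x)\| \le N \cdot C C' \|s\|_{W^{k,p}(E)}$, proving both that $s \in C_b^r(E)$ and that the embedding is continuous. The main obstacle is the careful bookkeeping showing that every constant appearing above is independent of $i$; but this is precisely the content that distinguishes the uniform embedding from the classical local one, and it is handled entirely by the bounded-geometry characterizations in Lemmas \ref{lem:transition_functions_uniformly_bounded} and \ref{lem:equiv_characterizations_bounded_geom_bundles} and by the norm equivalence \eqref{eq:sobolev_norm_local}. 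The remaining analytic content is just the standard Sobolev embedding on a fixed Euclidean ball.
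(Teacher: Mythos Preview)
Your proposal is correct and follows the standard reduction to the Euclidean Sobolev embedding via a uniform normal coordinate cover and synchronous framings; the bookkeeping on uniform constants is exactly the right point to emphasise, and bounded geometry supplies it through Lemmas~\ref{lem:transition_functions_uniformly_bounded}, \ref{lem:nice_coverings_partitions_of_unity} and~\ref{lem:equiv_characterizations_bounded_geom_bundles} as you say.

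There is nothing to compare, however: the paper does not give its own proof of this statement. It is recorded as a citation of \cite[Theorem~2.21]{aubin_nonlinear_problems} and used as a black box, so your write-up actually supplies more detail than the paper does. One small simplification you could make: once you invoke the local norm equivalence~\eqref{eq:sobolev_norm_local}, the term $\|\varphi_i s\|_{W^{k,p}(B_{2\varepsilon}(x_i))}$ is already dominated (up to a uniform constant) by the global norm $\|s\|_{W^{k,p}(E)}$, so the intermediate Leibniz step bounding it by $\|s\|_{W^{k,p}(B_{2\varepsilon}(x_i))}$ is not strictly needed.
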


We define the space
\begin{equation}
\label{eq:defn_W_infty}
W^{\infty,p}(E) := \bigcap_{k \in \IN_0} W^{k,p}(E)
\end{equation}
and equip it with the obvious \Frechet topology. The Sobolev Embedding Theorem tells us now that we have for all $p$ a continuous embedding
\[W^{\infty,p}(E) \hookrightarrow C^\infty_b(E).\]

Finally, we come to a technical statement (Lemma~\ref{lem:suitable_coloring_cover_M}) about the existence of open covers with special properties on manifolds of bounded geometry, similar to Lemma~\ref{lem:nice_coverings_partitions_of_unity}. As a preparation we first have to recall some facts about simplicial complexes of bounded geometry and corresponding triangulations of manifolds of bounded geometry.

\begin{defn}[Bounded geometry simplicial complexes]\label{defn:simplicial_complex_bounded_geometry}
A simplicial complex has \emph{bounded geometry} if there is a uniform bound on the number of simplices in the link of each vertex.

A subdivision of a simplicial complex of bounded geometry with the properties that
\begin{itemize}
\item each simplex is subdivided a uniformly bounded number of times on its $n$-skeleton, where the $n$-skeleton is the union of the $n$-dimensional sub-simplices of the simplex, and that
\item the distortion $\operatorname{length}(e) + \operatorname{length}(e)^{-1}$ of each edge $e$ of the subdivided complex is uniformly bounded in the metric given by barycentric coordinates of the original complex,
\end{itemize}
is called a \emph{uniform subdivision}.
\end{defn}

\begin{thm}[Attie {\cite[Theorem 1.14]{attie_classification}}]\label{thm:triangulation_bounded_geometry}
Let $M$ be a manifold of bounded geometry and without boundary.

Then $M$ has a triangulation as a simplicial complex of bounded geometry such that the metric given by barycentric coordinates is bi-Lipschitz equivalent\footnote{Two metric spaces $X$ and $Y$ are said to be \emph{bi-Lipschitz equivalent} if there is a homeomorphism $f\colon X \to Y$ with
\[\tfrac{1}{C} d_X(x,x^\prime) \le d_Y(f(x), f(x^\prime)) \le C d_X(x,x^\prime)\]
for all $x,x^\prime \in X$ and some constant $C > 0$.} to the one on $M$ induced by the Riemannian structure. This triangulation is unique up to uniform subdivision.

Conversely, if $M$ is a simplicial complex of bounded geometry which is a triangulation of a smooth manifold, then this smooth manifold admits a metric of bounded geometry with respect to which it is bi-Lipschitz equivalent to $M$.
\end{thm}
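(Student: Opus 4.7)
The plan is to leverage the uniform cover technology from Lemma~\ref{lem:nice_coverings_partitions_of_unity} and the local characterization of bounded geometry in Lemma~\ref{lem:transition_functions_uniformly_bounded} to construct a Delaunay-style triangulation whose combinatorics and metric geometry are controlled by the same constants that govern the bounded geometry of $M$. The forward direction is the main content; uniqueness and the converse are then relatively standard adaptations.

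First I would fix $\varepsilon < \injrad_M / 4$ and extract a maximal $\varepsilon$-separated net $V = \{x_i\}_{i \in I}$ in $M$, for example from the centers of the cover in Lemma~\ref{lem:nice_coverings_partitions_of_unity}. Since the sectional curvature is bounded, Bishop--Gromov comparison gives a uniform upper bound $N(R)$ on $\#(V \cap B_R(x))$ for every fixed $R$, and this is what will eventually supply the bound on vertex links. Next, I would declare a finite subset $\{x_{i_0}, \ldots, x_{i_k}\} \subset V$ to span a $k$-simplex whenever $\bigcap_j \overline{B_\varepsilon(x_{i_j})} \neq \emptyset$, and realize this simplex as the Euclidean convex hull of the $x_{i_j}$ inside a single normal coordinate chart of radius less than $\injrad_M$; such a chart exists because the mutual distances involved are at most $2\varepsilon < \injrad_M/2$. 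The combinatorial bound from above forces the link of every vertex to contain only uniformly many simplices, so the resulting complex has bounded geometry in the sense of Definition~\ref{defn:simplicial_complex_bounded_geometry}. That these simplices assemble into an honest triangulation of $M$ without degeneracies is the classical Whitney argument, now run in one exponential chart at a time. The bi-Lipschitz comparison between the barycentric and Riemannian metrics is then immediate locally, since in normal coordinates the Riemannian metric differs from the Euclidean one by a factor with uniform bounds by Lemma~\ref{lem:transition_functions_uniformly_bounded}; the uniform transition function bounds of the same lemma glue these local comparisons into a global bi-Lipschitz equivalence.

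For the uniqueness clause, two triangulations produced by this recipe admit a common simplicial refinement via a uniform simplicial approximation of the identity, and the combinatorial bounds on both sides force this refinement to subdivide each simplex boundedly many times with bounded edge distortion, which is precisely the defining property of a uniform subdivision. For the converse, given a simplicial complex of bounded geometry realized as a smooth manifold, one picks an inner product at each vertex and extends via a partition of unity subordinate to the open stars; bounded valence means only boundedly many cutoff functions overlap at any point, so the resulting metric has uniformly bounded curvature derivatives by the equivalence in Lemma~\ref{lem:equiv_characterizations_bounded_geom_bundles}. The real obstacle is bookkeeping rather than insight: a single finite set of uniform constants ($\varepsilon$, the bound $N(R)$, the bi-Lipschitz constants of the exponential map, and the Christoffel symbol bounds) must propagate consistently through every step so that the bound on links, the bi-Lipschitz constant, and the uniformity of subdivisions all emerge from those same constants, and this is exactly what the local characterizations in Lemmas~\ref{lem:transition_functions_uniformly_bounded} and \ref{lem:equiv_characterizations_bounded_geom_bundles} are designed to deliver.
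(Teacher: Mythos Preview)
The paper does not prove this theorem at all: it is stated as a citation of Attie's result \cite[Theorem~1.14]{attie_classification}, and the only commentary the paper adds is Remark~\ref{rem:attie_regularity} on the discrepancy between Attie's weaker curvature hypotheses and the stronger ones used here. So there is no ``paper's own proof'' to compare against; your proposal is an attempt to supply what the paper deliberately outsources.

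That said, your sketch has a genuine gap in the forward direction. What you describe --- declaring $\{x_{i_0},\ldots,x_{i_k}\}$ to be a simplex whenever $\bigcap_j \overline{B_\varepsilon(x_{i_j})}\neq\emptyset$ --- is the \emph{nerve} of the cover, not a Delaunay triangulation. The nerve is only homotopy equivalent to $M$, and its dimension is governed by the multiplicity of the cover, not by $\dim M$; already in the plane four $\varepsilon$-balls can have a common point, producing a $3$-simplex. Passing to ``Euclidean convex hulls in a chart'' does not repair this: those hulls will overlap and have the wrong dimension. An actual Delaunay construction on a manifold requires (i) perturbing the net into $\delta$-generic position so that no $m{+}2$ points lie on a common metric sphere, and (ii) proving a \emph{thickness} (fatness) lower bound for the resulting simplices so that the affine pieces glue to a homeomorphism; neither step is addressed, and ``the classical Whitney argument'' does not cover them in the Riemannian setting without these ingredients. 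Your converse is also too quick: averaging inner products over open stars with a partition of unity does not by itself yield a uniform positive injectivity radius; one needs an explicit model metric on each simplex (e.g.\ the standard Euclidean one pulled back by barycentric coordinates) and a uniform lower bound on the volume of simplices to control collapsing.
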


\begin{rem}\label{rem:attie_regularity}
Attie uses in \cite{attie_classification} a weaker notion of bounded geometry as we do: additionally to a uniformly positive injectivity radius he only requires the sectional curvatures to be bounded in absolute value (i.e., the curvature tensor is bounded in norm), but he assumes nothing about the derivatives (see \cite[Definition 1.4]{attie_classification}). But going into his proof of \cite[Theorem 1.14]{attie_classification}, we see that the Riemannian metric constructed for the second statement of the theorem is actually of bounded geometry in our strong sense (i.e., also with bounds on the derivatives of the curvature tensor).

As a corollary we get that for any manifold of bounded geometry in Attie's weak sense there is another Riemannian metric of bounded geometry in our strong sense that is bi-Lipschitz equivalent the original one (in fact, this bi-Lipschitz equivalence is just the identity map of the manifold, as can be seen from the proof).
\end{rem}

The last auxiliary lemma (before we come to the crucial Lemma~\ref{lem:suitable_coloring_cover_M}) is about coloring covers of manifolds with only finitely many colors:

\begin{lem}\label{lem:coloring_graph}
Let a covering $\{U_\alpha\}$ of $M$ with finite multiplicity be given. Then there exists a coloring of the subsets $U_\alpha$ with finitely many colors such that no two intersecting subsets have the same color.
\end{lem}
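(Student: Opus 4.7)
The lemma translates directly into a graph coloring problem. Form the intersection graph $G$ whose vertex set is $\{U_\alpha\}$, with an edge between $U_\alpha$ and $U_\beta$ whenever $U_\alpha \cap U_\beta \neq \emptyset$; a coloring as requested in the statement is precisely a proper vertex coloring of $G$. The plan is to bound the maximum degree of $G$ by a constant $C$ and then apply a greedy coloring procedure to finish with at most $C+1$ colors.

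For the degree bound, I would interpret \emph{finite multiplicity} as the existence of $N \in \IN$ with every point of $M$ belonging to at most $N$ of the sets $U_\alpha$. In the bounded-geometry setting of this section, the relevant covers consist of sets of uniformly bounded diameter and uniformly positive volume (for instance the balls of radius $2\varepsilon$ furnished by Lemma~\ref{lem:nice_coverings_partitions_of_unity}). Consequently any $U_\alpha$ meeting a fixed $U_\beta$ is contained in a single ball of some fixed radius $R$ centered at any point of $U_\beta$, and the volume of that ball is uniformly bounded from above by the volume-comparison results for bounded geometry. A straightforward volume-counting argument using the multiplicity bound $N$ and the uniform lower bound on $\vol(U_\alpha)$ then caps the number of such $U_\alpha$ by a constant $C$ independent of $\beta$.

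A greedy coloring now finishes the argument. Well-order the vertex set of $G$ and process it in that order, assigning to each $U_\alpha$ the smallest color in $\{1, \ldots, C+1\}$ not already used on one of its earlier-colored neighbors; since $U_\alpha$ has at most $C$ neighbors, such a color is always available. The resulting assignment is a proper coloring of $G$ using at most $C+1$ colors, which is the coloring claimed by the lemma.

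The principal (and essentially only) obstacle is the degree bound; the combinatorial greedy step is entirely routine once it is in place. If one wished to formulate the lemma outside of the bounded-geometry framework, one would need to add the bounded-degree condition on the intersection graph to the hypothesis, since finite multiplicity alone does not rule out intersection graphs of infinite chromatic number in general.
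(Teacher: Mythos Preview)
Your approach and the paper's agree on the skeleton: pass to the intersection graph, establish a uniform degree bound, and color greedily with one more color than the maximum degree. The differences lie in how each of the two steps is carried out.

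For the degree bound, the paper simply asserts that finite multiplicity of the cover forces a uniform bound on vertex degrees in the intersection graph. You instead observe---correctly---that this implication fails in general (for instance, on $\IR$ the cover $\{\IR\}\cup\{(n,n+1):n\in\IZ\}$ has multiplicity~$2$ while the set $\IR$ meets infinitely many of the other members), and you supply the extra geometric hypotheses (uniform diameter bound, uniform lower volume bound) that the covers actually arising in the paper satisfy. So on this point your argument is more careful than the paper's, though in the intended application both arrive at the same bound. For the coloring step, the paper invokes the de~Bruijn--Erd\"{o}s compactness theorem to reduce to finite subgraphs and runs the greedy algorithm there; you instead well-order the vertex set and run the greedy algorithm directly on the infinite graph. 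Both routes use some form of choice and yield the same bound; yours is a bit more direct in that it bypasses de~Bruijn--Erd\"{o}s entirely.
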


\begin{proof}
Construct a graph whose vertices are the subsets $U_\alpha$ and two vertices are connected by an edge if the corresponding subsets intersect. We have to find a coloring of this graph with only finitely many colors where connected vertices do have different colors.

To do this, we firstly use the theorem of de Bruijin--Erd\"{o}s stating that an infinite graph may be colored by $k$ colors if and only if every of its finite subgraphs may be colored by $k$ colors (one can use the Lemma of Zorn to prove this).

Secondly, since the covering has finite multiplicity it follows that the number of edges attached to each vertex in our graph is uniformly bounded from above, i.e., the maximum vertex degree of our graph is finite. But this also holds for every subgraph of our graph, with the maximum vertex degree possibly only decreasing by passing to a subgraph. Now a simple greedy algorithm shows that every finite graph may be colored with one more color than its maximum vertex degree: just start by coloring a vertex with some color, go to the next vertex and use an admissible color for it, and so on.
\end{proof}

\begin{lem}\label{lem:suitable_coloring_cover_M}
Let $M$ be a manifold of bounded geometry and without boundary.

Then there is an $\varepsilon > 0$ and a countable collection of uniformly discretely distributed points $\{x_i\}_{i \in I} \subset M$ such that $\{B_{\varepsilon}(x_i)\}_{i \in I}$ is a uniformly locally finite cover of $M$. We can additionally arrange such that it has the following two properties:
\begin{enumerate}
\item \label{cover_i} It is possible to partition $I$ into a finite amount of subsets $I_1, \ldots, I_N$ such that for each $1 \le j \le N$ the subset $U_j := \bigcup_{i \in I_j} B_{\varepsilon}(x_i)$ is a disjoint union of balls that are a uniform distance apart from each other, and such that for each $1 \le K \le N$ the connected components of $U_K := U_1 \cup \ldots \cup U_k$ are also a uniform distance apart from each other (see Figure~\ref{fig:not_allowed_cover}).
\item \label{cover_ii} Instead of choosing balls $B_\varepsilon(x_i)$ to get our cover of $M$ it is possible to choose other open subsets such that additionally to the property from Point~\ref{cover_i} for any distinct $1 \le m,n \le N$ the symmetric difference $U_m \Delta U_n$ consists of open subsets of $M$ which are a uniform distance apart from each other.\footnote{To see a non-example, in the lower part of Figure \ref{fig:not_allowed_cover} this is actually \emph{not} the case.}
\end{enumerate}
\end{lem}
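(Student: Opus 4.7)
The plan is to combine Lemma~\ref{lem:nice_coverings_partitions_of_unity} with Lemma~\ref{lem:coloring_graph}. For a parameter $L > 0$ to be fixed later, let $G_L$ denote the graph on a prospective set of centers $\{x_i\}_i$ whose edges connect pairs at distance at most $L$; by uniform discreteness and bounded geometry, $G_L$ has uniformly bounded vertex degree and hence admits a proper coloring by some finite number $N = N(L)$ of colors via Lemma~\ref{lem:coloring_graph}. I will therefore first fix $L$ large, next compute $N(L)$, then take $\varepsilon < \tfrac{1}{3}\injrad_M$ sufficiently small relative to $L$ and $N$, and finally apply Lemma~\ref{lem:nice_coverings_partitions_of_unity} at this scale to produce the uniformly discrete centers and the uniformly locally finite cover $\{B_\varepsilon(x_i)\}_i$. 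The coloring then yields the partition $I = I_1 \sqcup \cdots \sqcup I_N$, and any two centers in the same class $I_j$ lie at distance $> L$, so the balls in $U_j$ are pairwise separated by at least $L - 2\varepsilon > 0$, which establishes the first statement of (i).

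The second statement of (i), that for each $K$ the connected components of $U_K$ be uniformly separated, is the main obstacle, since such components need not be bounded in diameter. The plan is to prove by induction on $K$ a uniform lower bound $\sigma_K > 0$ for the minimum separation of components of $U_K$: the base case $\sigma_1 = L - 2\varepsilon$ is immediate. In the inductive step, every component of $U_{K+1}$ is obtained either as an isolated new ball of color $K+1$ (well-separated from $U_K$ by construction) or by attaching one or more color-$(K+1)$ balls to some component(s) of $U_K$. In the second case, the enlargement of a preceding component is contained in its $2\varepsilon$-neighborhood, so that $\sigma_{K+1} \ge \sigma_K - 4\varepsilon$, with the bridging of two previously separated components via a single color-$(K+1)$ ball simply collapsing them to one component without shrinking the separation from the others. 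Hence $\sigma_N \ge L - (4N+2)\varepsilon$, and choosing $\varepsilon$ small enough compared to $L$ and $N$ ensures $\sigma_N > 0$. Making this bookkeeping precise, in particular controlling the geometry of bridging when a third component lies nearby, is where I expect the bulk of the technical work.

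For property (ii), the plan is to replace each ball $B_\varepsilon(x_i)$ by the open super-level set $V_i := \{\varphi_i > \eta\}$ of the partition of unity $\{\varphi_i\}$ subordinate to the double-ball cover from Lemma~\ref{lem:nice_coverings_partitions_of_unity}, where $\eta > 0$ is chosen smaller than the reciprocal of the cover's multiplicity. Since $\sum_i \varphi_i \equiv 1$, at every point at least one $\varphi_i$ exceeds $\eta$, so the $V_i$'s still cover $M$, and they inherit the separation properties of (i) when the $U_j$'s are redefined as $\bigcup_{i\in I_j} V_i$. The passage from $B_\varepsilon(x_i)$ to $V_i$ eliminates the thin almost-touching crescents of two different-color overlapping balls responsible for the failure of (ii) in the pure ball picture: the boundary of each $V_i$ lies in a region where $\varphi_i$ is bounded away from zero, and the uniform bounds on the $\varphi_i$'s in normal coordinates together with bounded geometry then supply a uniform positive distance between the disjoint pieces of every $U_m \Delta U_n$.
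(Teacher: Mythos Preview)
Your plan for part~(i) has a circular dependency that cannot be closed. You propose to fix $L$, compute the number of colors $N = N(L)$ needed for the graph $G_L$, and only then choose $\varepsilon$ small and produce the centers via Lemma~\ref{lem:nice_coverings_partitions_of_unity} at scale $\varepsilon$. But $N$ is governed by the maximal vertex degree of $G_L$, which is the number of centers in a ball of radius $L$; for centers manufactured at scale $\varepsilon$ this is of order $(L/\varepsilon)^m$ by bounded geometry. Your separation estimate $\sigma_N \ge L - (4N+2)\varepsilon$ thus asks for $L \gtrsim N\varepsilon \sim L^m/\varepsilon^{m-1}$, which for $m \ge 2$ forces $\varepsilon \gtrsim L$ and contradicts $L - 2\varepsilon > 0$. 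No simultaneous choice of $L$ and $\varepsilon$ rescues the induction.

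There is also a gap in the inductive step itself. You claim that an isolated new ball of color $K+1$ is ``well-separated from $U_K$ by construction'', but the coloring of $G_L$ only guarantees that centers of the \emph{same} color lie at distance $> L$. A color-$(K+1)$ ball that happens not to meet $U_K$ can sit arbitrarily close to it, so $\sigma_{K+1}$ need not satisfy $\sigma_{K+1} \ge \sigma_K - 4\varepsilon$.

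What is really needed, and what your argument lacks, is the dichotomy that any two balls of the cover either intersect or are a uniform positive distance apart. Given this, both clauses of~(i) follow at once: color the intersection graph by Lemma~\ref{lem:coloring_graph}; same-color balls do not intersect, hence are uniformly apart, and any two components of $U_K$ consist of mutually non-intersecting balls, hence are again uniformly apart, with no inductive loss. The paper secures this dichotomy by invoking Attie's triangulation theorem (Theorem~\ref{thm:triangulation_bounded_geometry}): the $x_i$ are taken to be the vertices of a bounded-geometry triangulation of $M$ and $\varepsilon$ is set to $2/3$ of the bi-Lipschitz-controlled edge length, so that two balls meet precisely when their centers are adjacent, while non-adjacent vertices are a uniform distance apart by the bounded geometry of the complex. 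Part~(ii) is likewise handled combinatorially inside the triangulation, by reshaping the open sets simplex-by-simplex, rather than via super-level sets of a partition of unity; your level-set idea does not obviously produce the required uniform gap in $U_m \Delta U_n$ either, since two functions $\varphi_i$, $\varphi_j$ of different colors can cross the threshold $\eta$ at points arbitrarily close to one another.
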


\begin{figure}[ht]
\centering
\includegraphics[scale=0.5]{./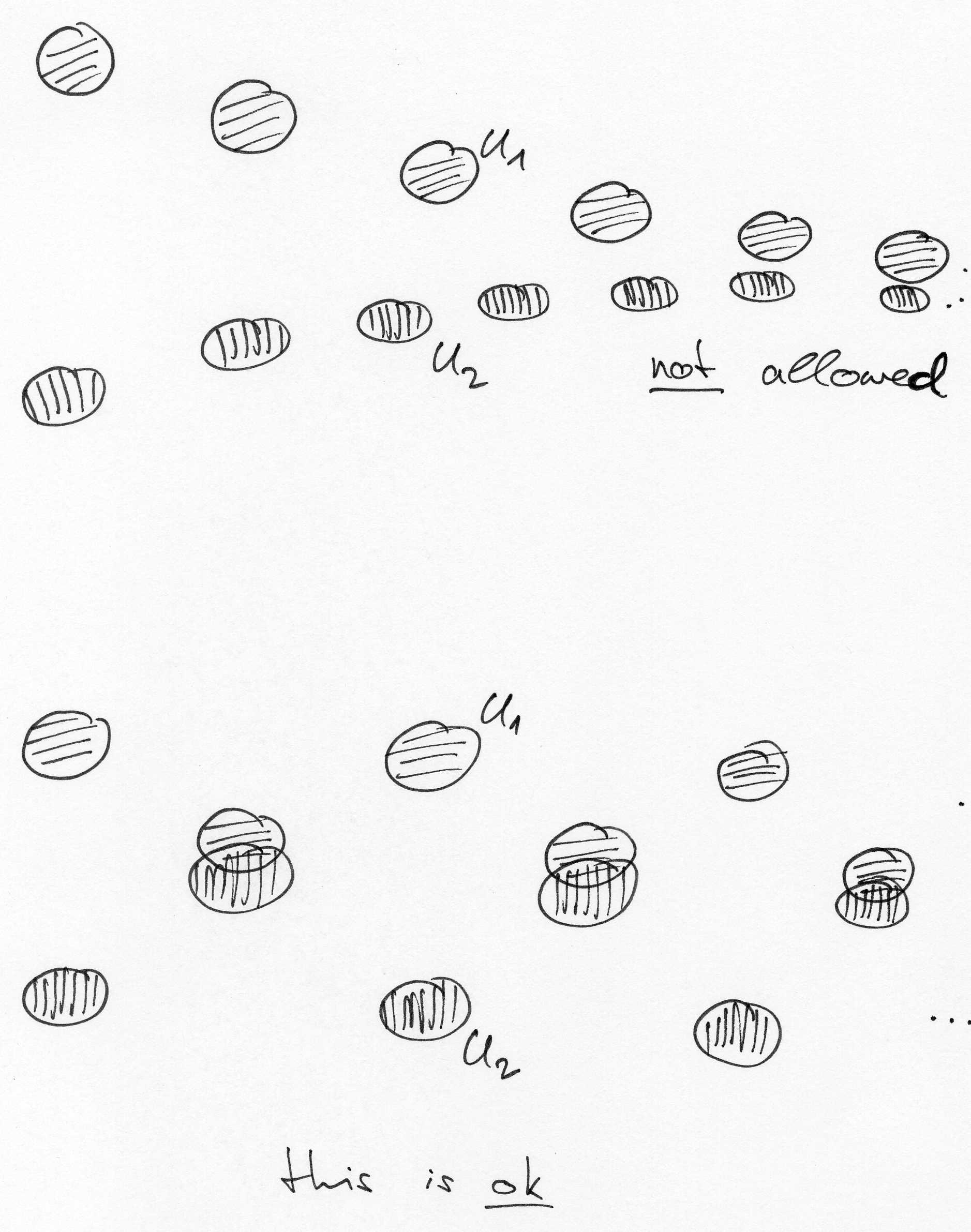}
\caption{Illustration for Lemma \ref{lem:suitable_coloring_cover_M}.\ref{cover_i}.}
\label{fig:not_allowed_cover}
\end{figure}

\begin{proof}
Let us first show how to get a cover of $M$ satisfying Point~\ref{cover_i} from the lemma.

We triangulate $M$ via the above Theorem \ref{thm:triangulation_bounded_geometry}. Then we may take the vertices of this triangulation as our collection of points $\{x_i\}_{i \in I}$ and set $\varepsilon$ to $2/3$ of the length of an edge multiplied with the constant $C$ which we get since the metric derived from barycentric coordinates is bi-Lipschitz equivalent to the metric derived from the Riemannian structure.

Two balls $B_\varepsilon(x_i)$ and $B_\varepsilon(x_j)$ for $x_i \not= x_j$ intersect if and only if $x_i$ and $x_j$ are adjacent vertices, and in the case that they are not adjacent, these balls are a uniform distance apart from each other. Hence it is possible to find a coloring of all these balls $\{B_\varepsilon(x_i)\}_{i \in I}$ with only finitely many colors having the claimed Property~\ref{cover_i}: apply Lemma \ref{lem:coloring_graph} to the covering $\{B_\varepsilon(x_i)\}_{i \in I}$ which has finite multiplicity due to bounded geometry.

To prove Point~\ref{cover_ii}, we replace in our cover of $M$ the balls $B_\varepsilon(x_i)$ with slightly differently chosen open subsets, as shown in the $2$-dimensional case in Figure~\ref{fig_fancy_coloring} (we are working in a triangulation of $M$ as above in the proof of Point~\ref{cover_i}).
\end{proof}

\begin{figure}[ht]
\centering
\includegraphics[scale=0.3]{./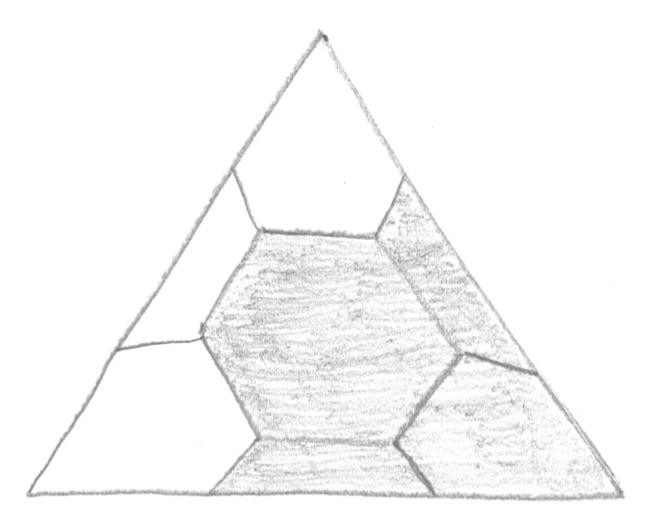}
\caption{Illustration for Lemma \ref{lem:suitable_coloring_cover_M}.\ref{cover_ii}.}
\label{fig_fancy_coloring}
\end{figure}

\subsection{Uniform pseudodifferential operators}
\label{sec_UPDOs}

In this section we will recall the definition of uniform pseudodifferential operators and some basic properties of them. This class of pseudodifferential operators was introduced by the author in his Ph.D.~thesis \cite{engel_phd}, but similar classes were also considered by Shubin \cite{shubin} and Kordyukov \cite{kordyukov}.

Let $M^m$ be an $m$-dimensional manifold of bounded geometry and let $E$ and $F$ be two vector bundles of bounded geometry over $M$.

\begin{defn}\label{defn:pseudodiff_operator}
An operator $P\colon C_c^\infty(E) \to C^\infty(F)$ is a \emph{uniform pseudodifferential operator of order $k \in \IZ$}, if with respect to a uniformly locally finite covering $\{B_{2\varepsilon}(x_i)\}$ of $M$ with normal coordinate balls and corresponding subordinate partition of unity $\{\varphi_i\}$ as in Lemma \ref{lem:nice_coverings_partitions_of_unity} we can write
\begin{equation}
\label{eq:defn_pseudodiff_operator_sum}
P = P_{-\infty} + \sum_i P_i
\end{equation}
satisfying the following conditions:
\begin{itemize}
\item $P_{-\infty}$ is a quasilocal smoothing operator,\footnote{That is to say, for all $k,l \in \IN_0$ we have that $P_{-\infty}\colon H^{-k}(E) \to H^l(F)$ has the following propety: there is a function $\mu\colon \IR_{> 0} \to \IR_{\ge 0}$ with $\mu(R) \to 0$ for $R \to \infty$ and such that for all $L \subset M$ and all $u \in H^{-k}(E)$ with $\supp u \subset L$ we have $\|A u\|_{H^l, M - B_R(L)} \le \mu(R) \cdot \|u\|_{H^{-k}}$.}
\item for all $i$ the operator $P_i$ is with respect to synchronous framings of $E$ and $F$ in the ball $B_{2\varepsilon}(x_i)$ a matrix of pseudodifferential operators on $\IR^m$ of order $k$ with support\footnote{An operator $P$ is \emph{supported in a subset $K$}, if $\supp Pu \subset K$ for all $u$ in the domain of $P$ and if $Pu = 0$ whenever we have $\supp u \cap K = \emptyset$.} in $B_{2\varepsilon}(0) \subset \IR^m$, and
\item the constants $C_i^{\alpha \beta}$ appearing in the bounds
\[\|D_x^\alpha D_\xi^\beta p_i(x,\xi)\| \le C^{\alpha \beta}_i (1 + |\xi|)^{k - |\beta|}\]
of the symbols of the operators $P_i$ can be chosen to not depend on $i$, i.e., there are $C^{\alpha \beta} < \infty$ such that
\begin{equation}
\label{eq:uniformity_defn_PDOs}
C^{\alpha \beta}_i \le C^{\alpha \beta}
\end{equation}
for all multi-indices $\alpha, \beta$ and all $i$.\qedhere
\end{itemize}
\end{defn}

To define ellipticity we have to recall the definition of symbols. We let $\pi^\ast E$ and $\pi^\ast F$ denote the pull-back bundles of $E$ and $F$ to the cotangent bundle $\pi\colon T^\ast M \to M$ of the $m$-dimensional manifold $M$.

\begin{defn}\label{defnnkdf893}
Let $p$ be a section of the bundle $\Hom(\pi^\ast E, \pi^\ast F)$ over $T^\ast M$.
\begin{itemize}
\item We call $p$ a \emph{symbol of order $k \in \IZ$}, if the following holds: choosing a uniformly locally finite covering $\{ B_{2 \varepsilon}(x_i) \}$ of $M$ through normal coordinate balls and corresponding subordinate partition of unity $\{ \varphi_i \}$ as in Lemma \ref{lem:nice_coverings_partitions_of_unity}, and choosing synchronous framings of $E$ and $F$ in these balls $B_{2\varepsilon}(x_i)$, we can write $p$ as a uniformly locally finite sum $p = \sum_i p_i$, where $p_i(x,\xi) := p(x,\xi) \varphi(x)$ for $x \in M$ and $\xi \in T^\ast_x M$, and interpret each $p_i$ as a matrix-valued function on $B_{2 \varepsilon}(x_i) \times \IC^m$. Then for all multi-indices $\alpha$ and $\beta$ there must exist a constant $C^{\alpha \beta} < \infty$ such that for all $i$ and all $x, \xi$ we have
\begin{equation}\label{eq:symbol_uniformity}
\|D^\alpha_x D^\beta_\xi p_i(x,\xi) \| \le C^{\alpha \beta}(1 + |\xi|)^{k - |\beta|}.
\end{equation}
\item We will call $p$ \emph{elliptic}, if there is an $R > 0$ such that $p|_{| \xi | > R}$\footnote{We restrict $p$ to the bundle $\Hom(\pi^\ast E, \pi^\ast F)$ over the space $\{(x,\xi) \in T^\ast M \ | \ |\xi| > R\} \subset T^\ast M$.} is invertible and this inverse $p^{-1}$ satisfies the Inequality \eqref{eq:symbol_uniformity} for $\alpha, \beta = 0$ and order $-k$ (and of course only for $|\xi| > R$ since only there the inverse is defined). Note that as in the compact case it follows that $p^{-1}$ satisfies the Inequality \eqref{eq:symbol_uniformity} for all multi-indices $\alpha$, $\beta$.\qedhere
\end{itemize}
\end{defn}

\begin{defn}\label{defn:elliptic_operator}
Let $P$ be a uniform pseudodifferential operator. We will call $P$ \emph{elliptic}, if its principal symbol $\sigma(P)$ is elliptic.
\end{defn}

The main fact about elliptic operators that we will need later is the following one. Of course ellipticity is also crucially used to show that we can define a uniform $K$-homology class for such operators (see Example~\ref{ex_K_hom_classes}).

\begin{cor}[{\cite[Corollary 2.47]{engel_indices_UPDO}}]\label{cor:schwartz_function_of_PDO_quasilocal_smoothing}
Let $P$ be a symmetric and elliptic uniform pseudodifferential operator of positive order.

If $f$ is a Schwartz function, then $f(P)$ is a quasi-local smoothing operator.
\end{cor}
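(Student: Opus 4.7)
The plan is to split the statement into two assertions and treat each with spectral calculus: \textbf{(a)} $f(P)\colon H^{-k}(E) \to H^l(E)$ is bounded for every $k, l \in \mathbb{N}_0$ (smoothing), and \textbf{(b)} the operator norm of the restriction of $f(P)$ to sections supported in $L$, measured on $M \setminus B_R(L)$, tends to zero as $R \to \infty$ uniformly in $L \subset M$ (quasi-locality). Throughout I would use that $P$ is essentially self-adjoint on $C_c^\infty(E)$, which follows from symmetry, positive-order ellipticity, and bounded geometry by standard arguments in the uniform calculus; this justifies defining $f(P)$ via the spectral theorem.

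For (a), note that for every $N \in \mathbb{N}$ the function $g_N(x) := (1+x^2)^N f(x)$ is bounded, so $g_N(P)$ is bounded on $L^2$. Writing $f(P) = (1+P^2)^{-N} g_N(P)$ and invoking uniform elliptic regularity, which for $P$ of order $d > 0$ on a manifold of bounded geometry yields an estimate of the form $\|u\|_{H^{Nd}} \le C \|(1+P^2)^{N/2} u\|_{L^2}$ with $C$ independent of the point on $M$, shows that $f(P)\colon L^2 \to H^{2Nd}$ is bounded for every $N$. The adjoint $f(P)^\ast = \bar f(P)$ is of the same form, so duality and composition give $f(P)\colon H^{-k} \to H^l$ for all $k, l \in \mathbb{N}_0$.

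For (b), the main obstacle, I would use the Helffer--Sj\"ostrand formula. Pick an almost-analytic extension $\tilde f$ of $f$ with $|\bar\partial \tilde f(z)| = O_N((1+|z|)^{-N} |\operatorname{Im} z|^N)$ for every $N$ and write
\begin{equation*}
f(P) = -\frac{1}{\pi}\int_{\mathbb{C}} \bar\partial \tilde f(z)\,(z - P)^{-1}\,dA(z).
\end{equation*}
The quasi-locality of $f(P)$ then reduces to quasi-local estimates on the resolvent $(z-P)^{-1}$ with an explicit polynomial blow-up in $1/|\operatorname{Im} z|$ that is integrable against $\bar\partial \tilde f$. I would obtain these by constructing a uniform parametrix: starting from the elliptic symbol $(z - \sigma(P))^{-1}$ and iterating yields a uniform pseudodifferential operator $Q_N(z)$ of order $-d$ with $(z - P)Q_N(z) = I - R_N(z)$, where $R_N(z)$ is a uniform pseudodifferential operator of arbitrarily negative order. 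Because the symbol bounds \eqref{eq:symbol_uniformity} and \eqref{eq:uniformity_defn_PDOs} are uniform in $x \in M$, the Schwartz kernel of $R_N(z)$ decays off the diagonal at a rate depending only on the geometric constants of $M$ and the symbol bounds of $P$; a Neumann-series argument then transfers this decay to $(z - P)^{-1}$ itself.

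The delicate point is propagating the uniform estimates through the parametrix/Neumann-series construction so that the resulting kernel decay witnesses quasi-locality in the sense of the footnote, uniformly in $L \subset M$ and with a decay function $\mu(R)$ that depends only on the symbol constants of $P$ and the geometry of $M$. This is exactly where the uniformity built into Definition~\ref{defn:pseudodiff_operator} is essential. An alternative route, closer in spirit to Roe's original wave equation methods, is to use $f(P) = (2\pi)^{-1}\int \hat f(t) e^{itP}\,dt$ with a truncation at $|t| \le T$; this is cleanest when $P$ has order one (finite propagation speed), but for higher order it leads to analogous propagation-of-singularities issues that ultimately require the same kind of uniform parametrix analysis.
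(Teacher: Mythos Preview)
The paper does not actually prove this statement; it is quoted verbatim from \cite[Corollary~2.47]{engel_indices_UPDO} as background. There it is a \emph{corollary} of a general functional calculus theorem for symmetric, elliptic uniform pseudodifferential operators: one shows that for $g$ in a symbol class $S^m(\IR)$ the operator $g(P)$ is again a uniform pseudodifferential operator (of order $m\cdot\operatorname{ord}(P)$), and then observes that Schwartz functions lie in $S^{-\infty}(\IR)$ while uniform pseudodifferential operators of order $-\infty$ are quasilocal smoothing essentially by definition. So the quasilocality is not proved directly for $f(P)$ but inherited from the calculus.

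Your route is a legitimate alternative and part~(a) is fine. In part~(b), however, the Neumann-series step is not justified as written: from $(z-P)Q_N(z)=I-R_N(z)$ you would need $\|R_N(z)\|<1$ to sum $\sum_k R_N(z)^k$, and there is no reason this holds uniformly for $z$ in the support of $\bar\partial\tilde f$, in particular as $\operatorname{Im}z\to 0$. What one actually has is the resolvent identity $(z-P)^{-1}=Q_N(z)+(z-P)^{-1}R_N(z)$, with no series. The first summand is a uniform pseudodifferential operator and hence quasilocal, but for the second you are composing the unknown $(z-P)^{-1}$ with $R_N(z)$, so the argument is circular unless you supply an independent off-diagonal bound for the resolvent. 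The standard cure is a commutator (Combes--Thomas type) estimate: iterate $[\chi,(z-P)^{-1}]=(z-P)^{-1}[P,\chi](z-P)^{-1}$ with suitable cut-offs $\chi$ to produce polynomial decay in $R$ with a controlled power of $|\operatorname{Im}z|^{-1}$, which is then absorbed by the almost-analytic extension. If you go this way you bypass the parametrix entirely; if you keep the parametrix you should use it only to show that the resolvent lies in the uniform calculus, which brings you back to the approach of the cited reference.
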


\subsection{Uniform \texorpdfstring{$K$}{K}-homology and uniform \texorpdfstring{$K$}{K}-theory}
\label{sec_uniform_K}

Let us start with uniform $K$-homology. For this we first have to recall briefly the notion of multigraded Hilbert spaces. They arise as $L^2$-spaces of vector bundles on which Clifford algebras act.

\begin{itemize}
\item A \emph{graded Hilbert space} is a Hilbert space $H$ with a decomposition $H = H^+ \oplus H^-$ into closed, orthogonal subspaces. This is equivalent to the existence of a \emph{grading operator} $\epsilon$ (a selfadjoint unitary) such that its $\pm 1$-eigenspaces are $H^\pm$.
\item If $H$ is a graded space, then its \emph{opposite} is the graded space $H^\op$ with underlying vector space $H$ but with the reversed grading, i.e., $(H^\op)^+ = H^-$ and $(H^\op)^- = H^+$. This is equivalent to $\epsilon_{H^\op} = -\epsilon_H$.
\item An operator on a graded space $H$ is called \emph{even} if it maps $H^\pm$ again to $H^\pm$, and it is called \emph{odd} if it maps $H^\pm$ to $H^\mp$. Equivalently, an operator is even if it commutes with the grading operator $\epsilon$ of $H$, and it is odd if it anti-commutes with it.
\end{itemize}

\begin{defn}
Let $p \in \IN_0$.
\begin{itemize}
\item A \emph{$p$-multigraded Hilbert space} is a graded Hilbert space equipped with $p$ odd unitary operators $\epsilon_1, \ldots, \epsilon_p$ such that $\epsilon_i \epsilon_j + \epsilon_j \epsilon_i = 0$ for $i \not= j$, and $\epsilon_j^2 = -1$ for all $j$.
\item Note that a $0$-multigraded Hilbert space is just a graded Hilbert space, and by convention a $(-1)$-multigraded Hilbert space is an ungraded one.
\item Let $H$ be a $p$-multigraded Hilbert space. Then an operator on $H$ will be called \emph{multigraded}, if it commutes with the multigrading operators $\epsilon_1, \ldots, \epsilon_p$ of $H$.\qedhere
\end{itemize}
\end{defn}

To define uniform Fredholm modules we will need the following notions. Let us define
\begin{equation*}
\LLip_R(X) := \{ f \in C_c(X) \ | \ f \text{ is }L\text{-Lipschitz}, \diam(\supp f) \le R \text{ and } \|f\|_\infty \le 1\}.
\end{equation*}

\begin{defn}[{\cite[Definition 2.3]{spakula_uniform_k_homology}}]\label{defn:uniform_operators}
Let $T \in \IB(H)$ be an operator on a Hilbert space $H$ and $\rho\colon C_0(X) \to \IB(H)$ a representation.

We say that $T$ is \emph{uniformly locally compact}, if for every $R, L > 0$ the collection
\[\{\rho(f)T, T\rho(f) \ | \ f \in \LLip_R(X)\}\]
is uniformly approximable.\footnote{A collection of operators $\mathcal{A} \subset \IK(H)$ is said to be \emph{uniformly approximable}, if for every $\varepsilon > 0$ there is an $N > 0$ such that for every $T \in \mathcal{A}$ there is a rank-$N$ operator $k$ with $\|T - k\| < \varepsilon$.}

We say that $T$ is \emph{uniformly pseudolocal}, if for every $R, L > 0$ the collection
\[\{[T, \rho(f)] \ | \ f \in \LLip_R(X)\}\]
is uniformly approximable.
\end{defn}

\begin{defn}[Multigraded uniform Fredholm modules, cf.~{\cite[Definition 2.6]{spakula_uniform_k_homology}}]
Let $p \in \IZ_{\ge -1}$. A triple $(H,\rho,T)$ consisting of
\begin{itemize}
\item a separable $p$-multigraded Hilbert space $H$,
\item a representation $\rho\colon C_0(X) \to \IB(H)$ by even, multigraded operators, and
\item an odd multigraded operator $T \in \IB(H)$ such that
\begin{itemize}
\item the operators $T^2 - 1$ and $T - T^\ast$ are uniformly locally compact and
\item the operator $T$ itself is uniformly pseudolocal
\end{itemize}
\end{itemize}
is called a \emph{$p$-multigraded uniform Fredholm module over $X$}.
\end{defn}

\begin{defn}[Uniform $K$-homology, {\cite[Definition 2.13]{spakula_uniform_k_homology}}]
We define the \emph{uniform $K$-homology group $K_{p}^u(X)$} of any locally compact, separable metric space $X$ to be the abelian group generated by unitary equivalence classes of $p$-multigraded uniform Fredholm modules with the relations:
\begin{itemize}
\item if $x$ and $y$ are operator homotopic\footnote{A collection $(H, \rho, T_t)$ of uniform Fredholm modules is called an \emph{operator homotopy} if $t \mapsto T_t \in \IB(H)$ is norm continuous.}, then $[x] = [y]$, and
\item $[x] + [y] = [x \oplus y]$,
\end{itemize}
where $x$ and $y$ are $p$-multigraded uniform Fredholm modules.
\end{defn}

\begin{example}\label{ex_K_hom_classes}
\Spakula \cite[Theorem 3.1]{spakula_uniform_k_homology} showed that the usual Fredholm module arising from a generalized Dirac operator is uniform if we assume bounded geometry: if $D$ is a generalized Dirac operator acting on a Dirac bundle $S$ of bounded geometry over a manifold $M$ of bounded geometry, then the triple $(L^2(S), \rho, \chi(D))$, where $\rho$ is the representation of $C_0(M)$ on $L^2(S)$ by multiplication operators and $\chi$ is a normalizing function, is a uniform Fredholm module. It is multigraded if the Dirac bundle $S$ has an action of a Clifford algebra.

The author \cite[Theorem 3.39 and Proposition 3.40]{engel_indices_UPDO} generalized this to symmetric and elliptic uniform pseudodifferential operators over manifolds of bounded geometry, and also showed that this uniform $K$-homology class only depends on the principal symbol of the operator.
\end{example}

Let us now recall uniform $K$-theory, which was introduced by the author in his Ph.D.\ thesis \cite{engel_phd}.

\begin{defn}[Uniform $K$-theory]
Let $X$ be a metric space. The \emph{uniform $K$-theory groups of $X$} are defined as
\[K^p_u(X) := K_{-p}(C_u(X)),\]
where $C_u(X)$ is the $C^\ast$-algebra of bounded, uniformly continuous functions on $X$.
\end{defn}

On manifolds of bounded geometry we have an interpretation of uniform $K$-theory via isomorphism classes of vector bundles of bounded geometry. In order to state this properly, we first have to recall the needed notion of isomorphy.

Let $M$ be a manifold of bounded geometry and $E$ and $F$ two complex vector bundles equipped with Hermitian metrics and compatible connections.

\begin{defn}[$C^\infty$-boundedness / $C_b^\infty$-isomorphy of vector bundle homomorphisms]\label{defn:C_infty_bounded}
We will call a vector bundle homomorphism $\varphi\colon E \to F$ \emph{$C^\infty$-bounded}, if with respect to synchronous framings of $E$ and $F$ the matrix entries of $\varphi$ are bounded, as are all their derivatives, and these bounds do not depend on the chosen base points for the framings or the synchronous framings themself.

$E$ and $F$ will be called \emph{$C_b^\infty$-isomorphic}, if there is an isomorphism $\varphi\colon E \to F$ such that both $\varphi$ and $\varphi^{-1}$ are $C^\infty$-bounded.
\end{defn}

An important property of vector bundles over compact spaces is that they are always complemented, i.e., for every bundle $E$ there is a bundle $F$ such that $E \oplus F$ is isomorphic to the trivial bundle. Note that this fails in general for non-compact spaces. The following proposition shows that we have the analogous property for vector bundles of bounded geometry. We state it here since we will need the proposition later in this paper.

\begin{defn}[$C_b^\infty$-complemented vector bundles]
A vector bundle $E$ will be called \emph{$C_b^\infty$-complemented}, if there is some vector bundle $E^\perp$ such that $E \oplus E^\perp$ is $C_b^\infty$-isomorphic to a trivial bundle with the flat connection.
\end{defn}

\begin{prop}[{\cite[Proposition 4.13]{engel_indices_UPDO}}]\label{prop:every_bundle_complemented}
Let $M$ be a manifold of bounded geometry and let $E \to M$ be a vector bundle of bounded geometry.

Then $E$ is $C_b^\infty$-complemented.
\end{prop}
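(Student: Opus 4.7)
The plan is to construct an isometric, $C_b^\infty$-bounded embedding $\Phi\colon E \hookrightarrow M \times \IC^N$ into a trivial bundle of finite rank; the orthogonal complement $E^\perp := \Phi(E)^\perp$ then automatically gives the desired decomposition $E \oplus E^\perp \cong M \times \IC^N$ onto a trivial bundle with the flat connection.

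First I would apply Lemma~\ref{lem:suitable_coloring_cover_M} to obtain a uniformly locally finite cover $\{B_\varepsilon(x_i)\}_{i \in I}$ of $M$ together with a partition $I = I_1 \sqcup \cdots \sqcup I_k$ into finitely many ``colors'' such that, for each $j$, the balls $\{B_\varepsilon(x_i)\}_{i \in I_j}$ are pairwise separated by a uniform positive distance. Writing $U_j := \bigcup_{i \in I_j} B_\varepsilon(x_i)$, each $U_j$ is a disjoint union of uniformly separated small balls, on each of which $E$ admits a synchronous framing. By Lemma~\ref{lem:equiv_characterizations_bounded_geom_bundles} these framings are $C^\infty$-bounded with bounds uniform in the base ball, and because the components of $U_j$ are disjoint they assemble into a single $C_b^\infty$-trivialization $\tau_j \colon E|_{U_j} \to U_j \times \IC^r$ where $r = \operatorname{rank}(E)$.

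Next I would fix a smooth partition of unity $\{\psi_j\}_{j=1}^k$ with $\supp(\psi_j)$ compactly contained in $U_j$, satisfying $\sum_j \psi_j^2 \equiv 1$ and carrying uniform $C^\infty$-bounds; this can be produced from a bounded-geometry partition of unity subordinate to a slightly shrunken subcover of $\{B_\varepsilon(x_i)\}$ by grouping contributions according to color (a locally finite sum, hence $C_b^\infty$) and then rescaling via $\varphi_j \mapsto \varphi_j / \bigl(\sum_l \varphi_l^2\bigr)^{1/2}$; the denominator is bounded below by a positive constant because $\sum_l \varphi_l \equiv 1$ with only finitely many colors. Define
$$\Phi(e_x) := \bigl(\psi_1(x) \tau_1(e_x), \ldots, \psi_k(x) \tau_k(e_x)\bigr) \in \IC^{kr},$$
where the $j$-th entry is interpreted as $0$ when $x \notin U_j$; the compact containment of $\supp(\psi_j)$ in $U_j$ makes this extension smooth, and the uniform $C^\infty$-bounds on the $\psi_j$ and $\tau_j$ render $\Phi$ a $C_b^\infty$-bounded bundle homomorphism. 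Since the $\tau_j$ are fiberwise unitary and $\sum_j \psi_j^2 \equiv 1$, one computes $\|\Phi(e_x)\|^2 = \|e_x\|^2$, so $\Phi$ is fiberwise isometric with $\Phi^\ast \Phi = \id_E$.

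Finally, $E^\perp := (\Phi(E))^\perp \subset M \times \IC^{kr}$ is the image of the $C_b^\infty$-bounded fiberwise projection $\id - \Phi \Phi^\ast$, so it is itself a $C_b^\infty$-subbundle, and the fiberwise map $(e, v) \mapsto \Phi(e) + v$ provides a $C_b^\infty$-isomorphism $E \oplus E^\perp \to M \times \IC^{kr}$. The main obstacle throughout is ensuring that globally uniform $C^\infty$-bounds survive every step; this is precisely what Lemma~\ref{lem:suitable_coloring_cover_M} is designed to enable, by reducing the non-compact problem on $M$ to a finite collection of problems on the sets $U_j$ whose uniform separation of components allows local trivializations over disjoint small balls to be patched together without any loss of uniformity.
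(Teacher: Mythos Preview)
The paper does not actually give its own proof of this proposition: it is quoted verbatim as \cite[Proposition~4.13]{engel_indices_UPDO} and used as a black box. So there is nothing in the present paper to compare your argument against line by line.

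That said, your argument is correct and is precisely the standard one used in the cited reference. The essential idea --- pass from the infinite cover by normal coordinate balls to a finite cover $U_1,\ldots,U_k$ via a coloring, trivialize $E$ over each $U_j$ by patching synchronous framings over uniformly separated balls, and then use a finite partition of unity to embed $E$ isometrically into $\IC^{kr}$ --- is exactly how one adapts the Swan-type argument to the bounded geometry setting. A couple of minor remarks: your phrase ``compactly contained'' is not quite right since $\supp(\psi_j)$ is non-compact; what you need (and what your construction gives) is only that $\supp(\psi_j)\subset U_j$ as a closed subset, which is ensured by taking the $\chi_i$ supported in slightly smaller balls than those on which the synchronous framings live. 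Also, you do not need the full strength of Lemma~\ref{lem:suitable_coloring_cover_M}; the combination of Lemma~\ref{lem:nice_coverings_partitions_of_unity} and Lemma~\ref{lem:coloring_graph} (applied to the doubled balls) already yields a finite coloring with same-color balls a uniform distance apart, which is all your construction uses.
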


We can now state the interpretation of uniform $K$-theory on manifolds of bounded geometry via vector bundles.

\begin{thm}[Interpretation of $K^0_u(M)$, {\cite[Theorem 4.18]{engel_indices_UPDO}}]\label{thm:interpretation_K0u}
Let $M$ be a Riemannian manifold of bounded geometry and without boundary.

Then every element of $K^0_u(M)$ is of the form $[E] - [F]$, where both $[E]$ and $[F]$ are $C_b^\infty$-isomorphism classes of complex vector bundles of bounded geometry over $M$.

Moreover, every complex vector bundle of bounded geometry over $M$ defines naturally a class in $K^0_u(M)$.
\end{thm}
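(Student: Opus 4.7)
The plan is to unpack the definition $K^0_u(M)=K_0(C_u(M))$ using the standard picture of $K_0$ of a unital $C^*$-algebra via projections in matrix algebras, and then to pass between such projections and vector bundles of bounded geometry by a smoothing argument combined with Proposition~\ref{prop:every_bundle_complemented}.

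First I would recall that every class in $K_0(C_u(M))$ can be written as $[p]-[q]$ with $p,q$ projections in $M_n(C_u(M))$ for some $n$. Given such a projection $p$, its fibrewise image defines a continuous subbundle of the trivial rank-$n$ bundle $M\times\IC^n$, but a priori only with $C^0_b$ regularity. So the key technical step is to smooth $p$ to a projection $\tilde p\in M_n(C_b^\infty(M))$ whose $K_0$-class equals that of $p$. I would use a convolution-type mollification adapted to bounded geometry: pick a cover by normal coordinate balls as in Lemma~\ref{lem:nice_coverings_partitions_of_unity} together with the subordinate partition of unity $\{\varphi_i\}$, and on each chart convolve the local coordinate representation of $p$ against a smooth bump of radius $\ll\varepsilon$. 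Because $p$ is uniformly continuous and the charts come with uniform bounds (Lemma~\ref{lem:transition_functions_uniformly_bounded}), the mollified section $p'$ will lie in $M_n(C_b^\infty(M))$ with derivatives bounded uniformly in the chart index, and $\|p'-p\|_\infty$ can be made arbitrarily small. Since the idempotents form an open subset in any Banach algebra and holomorphic functional calculus applied to a suitable contour around $\{0,1\}$ returns an honest projection $\tilde p$ close to $p'$, the class $[\tilde p]=[p]$ in $K_0(C_u(M))$ is unchanged, and $\tilde p\in M_n(C_b^\infty(M))$.

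Next I would check that the image bundle $E:=\tilde p(M\times\IC^n)$ is a vector bundle of bounded geometry. With respect to the trivial connection on $M\times\IC^n$, the induced connection on $E$ is $\tilde p\circ d\circ \tilde p$, whose Christoffel symbols in synchronous framings are polynomial expressions in $\tilde p$ and its derivatives; these are uniformly bounded because $\tilde p\in M_n(C_b^\infty(M))$. By Lemma~\ref{lem:equiv_characterizations_bounded_geom_bundles} this is equivalent to $E$ having bounded geometry in the sense of the definition, with $C_b^\infty$-bounded transition functions between synchronous framings. Hence each class in $K^0_u(M)$ is represented by $[E]-[F]$ with $E,F$ of bounded geometry. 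Uniqueness up to $C_b^\infty$-isomorphism follows because two projections yielding the same subbundle up to $C_b^\infty$-isomorphism are conjugate by a $C_b^\infty$-unitary in a matrix algebra, hence Murray--von Neumann equivalent in $M_\infty(C_u(M))$.

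For the second, converse statement, I would start from a vector bundle $E\to M$ of bounded geometry and apply Proposition~\ref{prop:every_bundle_complemented} to obtain a complement $E^\perp$ with $E\oplus E^\perp\cong_{C_b^\infty}M\times\IC^n$. The orthogonal projection onto $E$ is then a projection in $M_n(C_b^\infty(M))\subset M_n(C_u(M))$, and its $K_0$-class provides the natural map $[E]\mapsto[E]\in K^0_u(M)$; well-definedness on $C_b^\infty$-isomorphism classes is immediate from the fact that $C_b^\infty$-isomorphic bundles yield unitarily equivalent projections (after possibly stabilizing with complements). The principal obstacle I anticipate is the smoothing step: one must keep the estimates uniform across the cover, ensure the functional-calculus correction does not destroy the bounded-geometry bounds, and match the two descriptions of $K^0$-classes via projections on $C_b^\infty$ and on $C_u$ level, for which a density plus functional calculus argument identifies $K_0(C_b^\infty(M))\to K_0(C_u(M))$ on the relevant representatives.
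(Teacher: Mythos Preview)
The paper does not actually give a proof of this theorem: it is quoted from \cite[Theorem~4.18]{engel_indices_UPDO} in the review Section~\ref{sec_uniform_K}, so there is no in-paper argument to compare your proposal against.

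That said, your outline is correct and is essentially the standard route to such a Serre--Swan type statement. One organizational remark: rather than carrying out the mollification-plus-functional-calculus step by hand, it is cleaner to package it as the assertion that $C_b^\infty(M)\subset C_u(M)$ is a dense subalgebra closed under holomorphic functional calculus (a ``local $C^\ast$-algebra''), so that the inclusion induces an isomorphism $K_0(C_b^\infty(M))\cong K_0(C_u(M))$. The paper in fact invokes exactly this observation later, in Section~\ref{sec:chern_isos}, when constructing the Chern character on $K_u^\ast(O\subset M)$. With that isomorphism in hand, the remaining steps---identifying projections in $M_n(C_b^\infty(M))$ with subbundles of bounded geometry of trivial bundles, and using Proposition~\ref{prop:every_bundle_complemented} for the converse direction---are precisely as you describe.
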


Note that the last statement in the above theorem is not trivial since it relies on the fact that every vector bundle of bounded geometry is suitably complemented.

\begin{thm}[Interpretation of $K^1_u(M)$, {\cite[Theorem 4.21]{engel_indices_UPDO}}]\label{thm:interpretation_K1u}
Let $M$ be a Riemannian manifold of bounded geometry and without boundary.

Then every elements of $K^1_u(M)$ is of the form $[E] - [F]$, where both $[E]$ and $[F]$ are $C_b^\infty$-isomorphism classes of complex vector bundles of bounded geometry over $S^1 \times M$ with the following property: there is some neighbourhood $U \subset S^1$ of $1$ such that $[E|_{U \times M}]$ and $[F|_{U \times M}]$ are $C_b^\infty$-isomorphic to a trivial vector bundle with the flat connection (the dimension of the trivial bundle is the same for both $[E|_{U \times M}]$ and $[F|_{U \times M}]$).

Moreover, every pair of complex vector bundles $E$ and $F$ of bounded geometry and with the above properties define a class $[E] - [F]$ in $K_u^1(M)$.
\end{thm}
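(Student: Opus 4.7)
The plan is to identify $K_u^1(M) = K_1(C_u(M))$ with the relative group $\ker(K_u^0(S^1 \times M) \to K_u^0(M))$, where the map is restriction to $\{1\} \times M$, and then massage a representing pair of bounded-geometry bundles on $S^1 \times M$ via stabilization and parallel transport until triviality holds on a uniform neighbourhood of $\{1\} \times M$. For the identification I would first observe that $C(S^1) \otimes C_u(M) \cong C_u(S^1 \times M)$: the inclusion is clear, and since $S^1$ is compact, any bounded uniformly continuous function on $S^1 \times M$ lies in the $C^\ast$-tensor product (e.g.\ via uniform approximation by functions separating the $S^1$ and $M$ variables, or equivalently by noting that $\beta_u(S^1 \times M) = S^1 \times \beta_u M$). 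Then the split short exact sequence
\[0 \to C_0(S^1 \setminus \{1\}) \otimes C_u(M) \to C(S^1) \otimes C_u(M) \xrightarrow{\ev_1} C_u(M) \to 0,\]
together with the suspension isomorphism $K_1(C_u(M)) \cong K_0(C_0(\IR) \otimes C_u(M))$ and the six-term sequence, yields $K_u^1(M) \cong \ker(\ev_1^\ast \colon K_u^0(S^1 \times M) \to K_u^0(M))$.

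Given a class in this kernel, Theorem~\ref{thm:interpretation_K0u} represents it as $[E] - [F]$ with $E,F$ vector bundles of bounded geometry on $S^1 \times M$ satisfying $[E|_{\{1\} \times M}] = [F|_{\{1\} \times M}]$ in $K_u^0(M)$. Unwinding the relations in $K_u^0(M)$ and invoking Proposition~\ref{prop:every_bundle_complemented}, I would find $n \in \IN$ and a $C_b^\infty$-isomorphism $E|_{\{1\} \times M} \oplus \IC^n \cong_{C_b^\infty} F|_{\{1\} \times M} \oplus \IC^n$. Replacing $E, F$ by $E \oplus p^\ast \IC^n$ and $F \oplus p^\ast \IC^n$ (where $p\colon S^1 \times M \to M$ is the projection) leaves the class unchanged, so I may assume $E|_{\{1\} \times M} \cong_{C_b^\infty} F|_{\{1\} \times M}$. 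Choosing next a complement $F|_{\{1\} \times M} \oplus G \cong_{C_b^\infty} \IC^N$ (again by Proposition~\ref{prop:every_bundle_complemented}) and replacing $E, F$ by $E \oplus p^\ast G$, $F \oplus p^\ast G$, I arrange that both $E|_{\{1\} \times M}$ and $F|_{\{1\} \times M}$ are $C_b^\infty$-isomorphic to the trivial bundle $\IC^N$ with the flat connection.

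The remaining and main step is to promote triviality at $\{1\} \times M$ to triviality on a uniform neighbourhood $U \times M$. Given a $C_b^\infty$-trivialization $\psi\colon E|_{\{1\} \times M} \to \IC^N$, I would extend it to $\tilde\psi$ on $U \times M$ by parallel transporting frames along the $S^1$-direction curves $s \mapsto (s, m)$ using the connection on $E$. Because $E$ has bounded geometry, the curvature is bounded uniformly in $m$, so parallel transport is a $C_b^\infty$-isomorphism on a uniform interval $U$ around $1$ independent of $m \in M$; the inverse trivialization is likewise $C_b^\infty$ for the same reason. Doing this for both $E$ and $F$ and intersecting the resulting intervals produces the desired neighbourhood $U$. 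The converse direction is essentially tautological: a pair $(E, F)$ as in the statement automatically satisfies $[E|_{\{1\} \times M}] = [F|_{\{1\} \times M}]$ (both trivial of the same dimension), so $[E] - [F] \in \ker(\ev_1^\ast)$ defines a class in $K_u^1(M)$. The main technical obstacle is precisely the uniform extension step; it is not hard, but one must verify that the constants controlling the parallel transport and its inverse depend only on the bounded geometry data of $E$ and $F$, not on the base point in $M$.
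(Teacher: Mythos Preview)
The paper does not prove this statement; it is only quoted (with a citation to \cite[Theorem~4.21]{engel_indices_UPDO}), so there is no proof here to compare against.

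On its own merits your outline is the natural one and looks correct: the identification of $K_u^1(M)$ with $\ker\big(\ev_1^\ast\colon K_u^0(S^1\times M)\to K_u^0(M)\big)$ via $C_u(S^1\times M)\cong C(S^1)\otimes C_u(M)$ and the split suspension sequence is standard; the stabilization and complementation steps using Proposition~\ref{prop:every_bundle_complemented} are exactly what one expects; and propagating the trivialization from $\{1\}\times M$ to a collar $U\times M$ by parallel transport in the $S^1$-direction is the right geometric move, with bounded geometry of $E$ supplying the uniform control on the frame and all its derivatives. Two small points worth tightening: (i) the implication ``$[E|_{\{1\}\times M}]=[F|_{\{1\}\times M}]$ in $K_u^0(M)$ $\Rightarrow$ $E|_{\{1\}\times M}\oplus\IC^n\cong_{C_b^\infty}F|_{\{1\}\times M}\oplus\IC^n$'' uses more than the literal wording of Theorem~\ref{thm:interpretation_K0u} (which only says every class has a bundle representative); you are implicitly using that $K_u^0(M)$ is the Grothendieck group of $C_b^\infty$-isomorphism classes, which is what the cited reference establishes but is not restated here. (ii) The phrase ``$C_b^\infty$-isomorphic to a trivial bundle with the flat connection'' refers to the bounded-geometry structure on the target, not to a connection-preserving isomorphism, so your parallel-transport trivialization (which is not flat) is indeed admissible---but it is worth saying this explicitly.
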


We have a cap product\footnote{We need some assumptions on the space $X$ to construct the cap product. But because every space occuring in this paper will satisfy them, we have refrained from stating these assumptions explicitly.}
\[\cap \colon K_u^p(X) \otimes K_q^u(X) \to K_{q-p}^u(X).\]
Let us collect in the next proposition some properties of it.

\begin{prop}[{\cite[Proposition 4.28]{engel_indices_UPDO}}]\label{prop:properties_general_cap_product}
\mbox{}
\begin{itemize}
\item We have the formula
\begin{equation}\label{eq:general_cap_compatibility_module}
(P \otimes Q) \cap T = P \cap (Q \cap T)
\end{equation}
for all elements $P, Q \in K_u^\ast(X)$ and $T \in K_\ast^u(X)$, where $\otimes$ is the internal product\footnote{If the classes are represented by vector bundles, then the internal product is just given by the tensor product bundle.} on uniform $K$-theory.
\item We have the following compatibility with the external products:
\begin{equation}\label{eq:compatibility_cap_external}
(P \times Q) \cap (S \times T) = (-1)^{qs} (P \cap S) \times (Q \cap T),
\end{equation}
where $P \in K_u^p(X)$, $Q \in K_u^q(X)$ and $S \in K^u_s(X)$, $T \in K^u_t(X)$.
\item If $E \to M$ is a vector bundle of bounded geometry over a manifold $M$ of bounded geometry and $D$ an operator of Dirac type over $M$, then we have
\begin{equation}\label{eq:cap_twisted_Dirac}
[E] \cap [D] = [D_E] \in K_\ast^u(M),
\end{equation}
where $D_E$ is the twisted operator.
\end{itemize}
\end{prop}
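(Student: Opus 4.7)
My plan is to establish the three items by fixing concrete representatives of cap products and then constructing explicit operator homotopies, with uniformity conditions tracked throughout. Given a class in $K^0_u(M)$ represented by a $C_b^\infty$-complemented bundle $E$ together with a complement $E^\perp$ satisfying $E \oplus E^\perp \cong \IC^N$ (using Proposition~\ref{prop:every_bundle_complemented}), and a uniform Fredholm module $(H, \rho, T)$ representing a class in $K^u_q(M)$, the cap product should be represented by compressing $H^{\oplus N}$ by the projection $p_E \in M_N(C_b^\infty(M))$, i.e.\ by the triple $(p_E(H^{\oplus N}),\, p_E \rho^{\oplus N} p_E,\, p_E T^{\oplus N} p_E)$. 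The first verification is that this is again a uniform Fredholm module: uniform pseudolocality of the compressed operator follows from pseudolocality of $T$ together with uniform pseudolocality of multiplication by the $C_b^\infty$-bounded projection $p_E$, and an analogous argument controls uniform local compactness of $(p_E T^{\oplus N} p_E)^2 - p_E$ and of $p_E T^{\oplus N} p_E - (p_E T^{\oplus N} p_E)^*$. The $K^1_u$ case is handled in the same way using the interpretation via bundles on $S^1 \times M$ (Theorem~\ref{thm:interpretation_K1u}).

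For the module property in (1), I would compare both sides on representatives. Choosing bundles $E, F$ with trivializing complements of ranks $N, M$, both the iterated cap product $P \cap (Q \cap T)$ and the single cap product $(P \otimes Q) \cap T$ are represented by compressing $H^{\oplus NM}$ by $p_E \otimes p_F = p_{E \otimes F}$, giving the same Fredholm module on the nose. For the external product compatibility in (2), I would expand both sides as external products of Fredholm modules on $X \times Y$; the Koszul sign $(-1)^{qs}$ emerges from rearranging the Clifford structure of $Q$ past the odd Fredholm operator representing $S$, which is a purely algebraic bookkeeping of graded tensor products.

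For the twisted Dirac statement in (3), I would construct an explicit operator homotopy between the two sides. Under the trivialization $E \oplus E^\perp \cong M \times \IC^N$, we identify $p_E(L^2(S)^{\oplus N}) \cong L^2(S \otimes E)$. The compressed operator $p_E(D^{\oplus N})p_E$ and the twisted Dirac operator $D_E$ differ by a zeroth-order $C_b^\infty$-bounded operator given by Clifford contraction with the second fundamental form of $E$ inside $\IC^N$ (equivalently, with the difference between $\nabla^E$ and the projected flat connection). Since both are symmetric elliptic uniform pseudodifferential operators of the same positive order, linearly interpolating and applying a common normalizing function yields a norm-continuous path of uniform Fredholm modules connecting $[E] \cap [D]$ to $[D_E]$; uniformity along the path is ensured by Corollary~\ref{cor:schwartz_function_of_PDO_quasilocal_smoothing}.

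The main obstacle throughout will be propagating the uniformity conditions. In the compact setting every step is essentially automatic, whereas here each commutator of $p_E$ with $\rho(f)$, each functional-calculus application, and each norm-continuous interpolation must produce operators whose local-compactness and pseudolocality estimates are uniform in the bounded-geometry charts of $M$. The key enablers for this bookkeeping are Proposition~\ref{prop:every_bundle_complemented} (which turns bundles into projections in a trivial bundle), Lemma~\ref{lem:equiv_characterizations_bounded_geom_bundles} (which translates bounded geometry of $E$ and of its connection into uniform bounds on transition functions and synchronous framings), and Corollary~\ref{cor:schwartz_function_of_PDO_quasilocal_smoothing} (which controls the smoothing step of the normalizing function).
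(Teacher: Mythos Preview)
The paper does not actually prove this proposition: it is stated with a citation to \cite[Proposition~4.28]{engel_indices_UPDO} and no argument is given in the present article. So there is no ``paper's own proof'' to compare against; your proposal is a standalone sketch.

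Your outline for items (1) and (2) is standard and essentially correct. For item (3), however, there is a genuine gap. The cap product $[E]\cap[D]$ is defined at the level of bounded uniform Fredholm modules, so its representative operator is the compression $p_E\,\chi(D)^{\oplus N}\,p_E$ of the \emph{normalized} operator, not $\chi\big(p_E\,D^{\oplus N}\,p_E\big)$. These two bounded operators do not coincide in general: functional calculus does not commute with compression by a non-spectral projection. Your interpolation argument compares $p_E\,D^{\oplus N}\,p_E$ with $D_E$ at the unbounded level and then applies $\chi$, which yields a homotopy from $\chi(p_E\,D^{\oplus N}\,p_E)$ to $\chi(D_E)$, but says nothing about $p_E\,\chi(D)^{\oplus N}\,p_E$.

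To close the gap you need one more homotopy. A clean way is to decompose $D^{\oplus N}$ on $L^2(S\otimes\IC^N)=L^2(S\otimes E)\oplus L^2(S\otimes E^\perp)$ as a $2\times 2$ matrix with diagonal entries $D'_E,D'_{E^\perp}$ (twisted by the projected flat connections) and bounded off-diagonal second fundamental form $B$, and then run the path $D_t$ obtained by scaling $B$ by $t\in[0,1]$. Each $D_t$ is a symmetric elliptic uniform operator of Dirac type, $t\mapsto\chi(D_t)$ is norm-continuous, and $p_E\,\chi(D_t)\,p_E$ is a uniform Fredholm module operator for every $t$; at $t=1$ you recover the cap-product representative and at $t=0$ the diagonal form gives $\chi(D'_E)$, after which your original zeroth-order interpolation from $D'_E$ to $D_E$ finishes the argument.
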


The main reason why we have recalled the cap product is the following duality result:

\begin{thm}[Uniform $K$-\Poincare duality, {\cite[Theorem 4.29]{engel_indices_UPDO}}]
\label{thm:Poincare_duality_K}
Let $M$ be an $m$-dimensional spin$^c$ manifold of bounded geometry and without boundary.

Then the cap product $- \cap [M] \colon K_u^\ast(M) \to K^u_{m-\ast}(M)$ with its uniform $K$-fundamental class $[M] \in K_m^u(M)$ is an isomorphism.
\end{thm}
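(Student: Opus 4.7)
The plan is to prove the duality by a Mayer--Vietoris induction over the controlled cover produced by Lemma~\ref{lem:suitable_coloring_cover_M}. First I would develop uniform Mayer--Vietoris sequences compatible with the cap product. On the uniform $K$-theory side, if $M = U \cup V$ is a decomposition by open subsets whose complements are a uniform distance apart, then one obtains a pullback square of $C^\ast$-algebras
\[\xymatrix{C_u(M) \ar[r] \ar[d] & C_u(U) \ar[d] \\ C_u(V) \ar[r] & C_u(U \cap V)}\]
and hence a six-term exact sequence in uniform $K$-theory. On the uniform $K$-homology side, \Spakula's formalism provides an analogous six-term sequence via a uniform excision property of uniform Fredholm modules. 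I would then check that $- \cap [M]$, built from the \spinc Dirac operator of $M$, intertwines these two sequences up to the usual signs, using that the Dirac operator restricts compatibly to open subsets.

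Second, I would establish the base case on a disjoint uniformly separated union $U = \bigsqcup_i B_\varepsilon(x_i)$ of small normal coordinate balls. Each individual ball is precompact and essentially a subset of $\IR^m$, so on it the duality reduces to the classical $K$-theoretic \Poincare duality for \spinc manifolds. The uniform separation of the balls together with the bounded-geometry assumption ensures that both $K^\ast_u(U)$ and $K^u_{m-\ast}(U)$ split as the appropriate uniform direct product (of $\ell^\infty$-type) indexed by $i$, and that the cap product with the restricted fundamental class respects this splitting.

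Third, I would induct on $K = 1, \ldots, N$ using the partition $I = I_1 \sqcup \cdots \sqcup I_N$ from Lemma~\ref{lem:suitable_coloring_cover_M}: set $V_K := U_1 \cup \cdots \cup U_K$. The decomposition $V_K = V_{K-1} \cup U_K$ satisfies the hypotheses of Step~1 because the connected components of $V_{K-1}$ and $U_K$ are uniformly apart from each other by Property~\ref{cover_i}. Property~\ref{cover_ii} of Lemma~\ref{lem:suitable_coloring_cover_M} is what ensures that the intersection $V_{K-1} \cap U_K$ is again of the form handled in Step~2, namely a uniformly separated disjoint union of small pieces. Applying the five-lemma to the morphism of six-term sequences induced by $- \cap [V_K]$ propagates the isomorphism from $V_{K-1}$ (inductive hypothesis) and from $U_K$, $V_{K-1} \cap U_K$ (base case) to $V_K$. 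After $N$ steps one obtains the duality on $M = V_N$.

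The main obstacle will be the technical setup of the uniform Mayer--Vietoris machinery and the verification that its connecting maps commute with $- \cap [M]$ up to sign. In the compact setting these are standard, but in the uniform setting all the bounded-geometry estimates must be tracked under restriction and excision; in particular one has to show that the uniform separation of the symmetric differences $U_m \Delta U_n$ guaranteed by Property~\ref{cover_ii} is genuinely strong enough to identify $C_u(U \cap V)$ and its uniform $K$-homology analogue with direct-product-type objects whose duality is already known. Once this technical foundation is in place, the inductive step is essentially a diagram chase.
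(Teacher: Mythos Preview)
Your approach is correct and is essentially the one taken in the cited reference \cite[Section~4.4]{engel_indices_UPDO}, as the present paper itself signals (see the proof of Theorem~\ref{thm:Poincare_de_Rham} and the discussion of the groups $K^\ast_u(O\subset M)$ in Section~\ref{sec:chern_isos}): a Mayer--Vietoris induction over the cover of Lemma~\ref{lem:suitable_coloring_cover_M}, with the base case handled by classical \spinc duality on a uniformly separated union of balls and the inductive step by the five-lemma. One wording slip to fix in Step~3: $V_{K-1}$ and $U_K$ are not ``uniformly apart'' --- they overlap --- rather it is their complements in $V_K$ that are uniformly separated, which is precisely what the pullback square of $C^\ast$-algebras needs.
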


\section{Uniform homology theories and Chern characters}

In Section~\ref{sec_cyclic_cocycles} we will recall the definition of (periodic) cyclic cohomology and construct the Chern--Connes characters $\ch\colon K_\ast^u(M) \dashrightarrow \HPucont^\ast(\Winftyone(M))$. In Section~\ref{sec:u_de_Rham_currents} we will then map further into uniform de Rham homology $\HudR_\ast(M)$ and prove various additional results, e.g., that we have an isomorphism $\HPucont^{\ast}(\Winftyone(M)) \cong \HudR_{\ast}(M)$ and \Poincare duality $\HbdR^\ast(M) \cong \HudR_{m-\ast}(M)$. At the end of Section~\ref{sec:u_de_Rham_currents} we discuss the Chern character $\ch \colon K^\ast_u(M) \to \HudRco^\ast(M)$ and the whole Section~\ref{sec:chern_isos} is devoted to the proof of the Chern character isomorphism theorem.

\subsection{Cyclic cocycles of uniformly finitely summable modules}
\label{sec_cyclic_cocycles}

The goal of this section is to construct the homological Chern character maps from uniform $K$-homology $K_\ast^u(M)$ of $M$ to continuous periodic cyclic cohomology $\HPucont^\ast(\Winftyone(M))$ of the Sobolev space $\Winftyone(M)$.

First we will recall the definition of Hochschild, cyclic and periodic cyclic cohomology of a (possibly non-unital) complete locally convex algebra $A$\footnote{We consider here only algebras over the field $\IC$. Furthermore, we assume that multiplication in $A$ is jointly continuous.}. The classical reference for this is, of course, Connes' seminal paper \cite{connes_noncomm_diff_geo}. The author also found Khalkhali's book \cite{khalkhali_basic} a useful introduction to these matters.

\begin{defn}
The \emph{continuous Hochschild cohomology} $\HHucont^\ast(A)$ of $A$ is the homology of the complex
\[\Cucont^0(A) \stackrel{b}\longrightarrow \Cucont^1(A) \stackrel{b}\longrightarrow \ldots,\]
where $\Cucont^n(A) = \Hom(A^{\widehat{\otimes}(n+1)}, \IC)$ and the boundary map $b$ is given by
\begin{align*}
(b\varphi)(a_0, \ldots, a_{n+1}) = & \sum_{i=0}^n (-1)^i \varphi(a_0, \ldots, a_i a_{i+1}, \ldots, a_{n+1}) +\\
& + (-1)^{n+1} \varphi(a_{n+1} a_0, a_1, \ldots, a_n).
\end{align*}
We use the completed projective tensor product $\widehat{\otimes}$ and the linear functionals are assumed to be continuous. But we still factor out only the image of the boundary operator to define the homology, and \emph{not} the closure of the image of $b$.
\end{defn}

\begin{defn}
The \emph{continuous cyclic cohomology} $\HCucont^\ast(A)$ of $A$ is the homology of the following subcomplex of the Hochschild cochain complex:
\[\Clucont^0(A) \stackrel{b}\longrightarrow \Clucont^1(A) \stackrel{b}\longrightarrow \ldots,\]
where $\Clucont^n(A) = \{\varphi \in \Cucont^n(A)\colon \varphi(a_n, a_0, \ldots, a_{n-1}) = (-1)^n \varphi(a_0, a_1, \ldots, a_n)\}$.
\end{defn}

There is a certain \emph{periodicity operator} $S\colon \HCucont^n(A) \to \HCucont^{n+2}(A)$. For the tedious definition of this operator on the level of cyclic cochains we refer the reader to Connes' original paper \cite[Lemma 11 on p.~322]{connes_noncomm_diff_geo} or to his book \cite[Lemma 14 on p.~198]{connes_book}.

\begin{defn}
The \emph{continuous periodic cyclic cohomology} $\HPucont^\ast(A)$ of $A$ is defined as the direct limit
\[\HPucont^\ast(A) = \underrightarrow{\lim} \ \HCucont^{\ast+2n}(A)\]
with respect to the maps $S$.
\end{defn}

Let $(H, \rho, T)$ be a graded uniform Fredholm module over $M$ and denote by $\epsilon$ the grading automorphism of the graded Hilbert space $H$. Moreover, assume that $(H, \rho, T)$ is involutive\footnote{Recall that a Fredholm module $(H,\rho,T)$ is called involutive if $T=T^*$, $\|T\|\le 1$ and $T^2=1$.} and uniformly $p$-summable, where the latter means $\sup_{f \in \LLip_R(M)} \|[T, \rho(f)]\|_p < \infty$ for the Schatten $p$-norm $\|-\|_p$.

Having such an involutive, uniformly $p$-summable Fredholm module at hand we define for all $m$ with $2m+1 \ge p$ a cyclic $2m$-cocycle on $\Winftyone(M)$, i.e., on the Sobolev space of infinite order and $L^1$-integrability, by
\[\ch^{0,2m}(H, \rho, T)(f_0, \ldots, f_{2m}) := \tfrac{1}{2} (2\pi i)^m m! \trace\big( \epsilon T [T, f_0] \cdots [T, f_{2m}] \big).\]
We have the compatibility $S \circ \ch^{0,2m} = \ch^{0, 2m+2}$ and therefore we get a map
\[\ch^0\colon K^u_0(M) \dashrightarrow \HPucont^0(\Winftyone(M)).\]

The dashed arrow indicates that we do not know that every uniform, even $K$-homology class is represented by a uniformly finitely summable module, and we also do not know if the map is well-defined, i.e., if two such modules representing the same $K$-homology class will be mapped to the same cyclic cocycle class. For \spinc manifolds the first mentioned problem is solved by \Poincare duality which states that every uniform $K$-homology class may be represented by the difference of two twisted Dirac operators (which are uniformly finitely summable). But the second mentioned problem about the well-definedness is much more serious and will only be solved by the local index theorem. We will state the resolution of this problem in Corollary \ref{cor:ch_well_defined}.

Given an ungraded, involutive, uniformly $p$-summable Fredholm module $(H, \rho, T)$, we define for all $m$ with $2m \ge p$ a cyclic $(2m-1)$-cocycle on $\Winftyone(M)$ by
\begin{align*}
\ch^{1, 2m-1}(H, \rho, T & )(f_0, \ldots, f_{2m-1}) =\\
& = (2\pi i)^m \tfrac{1}{2} (2m-1) (2m-3) \cdots 3 \cdot 1 \trace\big( T [T, f_0] \cdots [T, f_{2m-1}]\big).
\end{align*}
Again, this definition is compatible with the periodicity operator $S$ and so defines a map
\[\ch^1\colon K^u_1(M) \dashrightarrow \HPucont^1(\Winftyone(M)).\]

\subsection{Uniform de Rham (co-)homology}
\label{sec:u_de_Rham_currents}

In the previous section we constructed the characters $\ch\colon K_\ast^u(M) \dashrightarrow \HPucont^\ast(\Winftyone(M))$. The first goal of this section is to map further to uniform de Rham homology $\HudR_{\ast}(M)$. In the second part of this section we will then prove \Poincare duality of the latter with bounded de Rham cohomology: $\HbdR^\ast(M) \cong \HudR_{m-\ast}(M)$. And at the end of this section we will introduce uniform de Rham cohomology and construct the uniform Chern character from uniform $K$-theory to it.

\begin{defn}\label{defn:de_rham_hom}
We define the space of \emph{uniform de Rham $p$-currents} $\Omega_p^u(M)$ to be the topological dual space of the \Frechet space $W^{\infty, 1}(\Omega^p(M))$, i.e.,
\[\Omega_p^u(M) := \Hom(W^{\infty, 1}(\Omega^p(M)), \IC).\]
Recall from Definition \ref{defn:sobolev_spaces} and Equation \eqref{eq:defn_W_infty} that $W^{\infty, 1}(\Omega^p(M))$ denotes the Sobolev space of $p$-forms whose derivatives are all $L^1$-integrable.

Since the exterior derivative $d\colon W^{\infty, 1}(\Omega^p(M)) \to W^{\infty, 1}(\Omega^{p+1}(M))$ is continuous we get a corresponding dual differential (also denoted by $d$)
\begin{equation}
\label{eqjnker4}
d\colon \Omega_p^u(M) \to \Omega_{p-1}^u(M).
\end{equation}
We define the \emph{uniform de Rham homology} $\HudR_\ast(M)$ with coefficients in $\IC$ as the homology of the complex
\[\ldots \stackrel{d}\longrightarrow \Omega_p^u(M) \stackrel{d}\longrightarrow \Omega^u_{p-1}(M) \stackrel{d}\longrightarrow \ldots \stackrel{d}\longrightarrow \Omega_0(M) \to 0,\]
where $d$ is the dual differential \eqref{eqjnker4}.
\end{defn}

\begin{defn}
We define a map $\alpha\colon \Cucont^p(\Winftyone(M)) \to \Omega_p^u(M)$ by
\[\alpha(\varphi)(f_0 d f_1 \wedge \ldots \wedge d f_p) := \frac{1}{p!} \sum_{\sigma \in \frakS_p} (-1)^\sigma \varphi(f_0, f_{\sigma(1)}, \ldots, f_{\sigma(p)}),\]
where $\frakS_p$ denotes the symmetric group on $1, \ldots, p$.
\end{defn}

The antisymmetrization that we have done in the above definition of $\alpha$ maps Hochschild cocycles to Hochschild cocycles and vanishes on Hochschild coboundaries. This means that $\alpha$ descends to a map
\[\alpha\colon \HHucont^\ast(\Winftyone(M)) \to \Omega_\ast^u(M)\]
on Hochschild cohomology.

Before we can prove that $\alpha$ is an isomorphism we need a technical lemma:

\begin{lem}\label{lem:tensor_prod_sobolev}
Let $M$ and $N$ be manifolds of bounded geometry and without boundary. Then we have
\[\Winftyone(M) \hatotimes \Winftyone(N) \cong \Winftyone(M \times N),\]
where $\hatotimes$ denotes the projective tensor product.
\end{lem}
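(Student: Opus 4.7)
My plan is to reduce the statement to a local Euclidean version via the uniform partitions of unity provided by bounded geometry, and then to apply Grothendieck's classical identity $L^1(X) \hatotimes L^1(Y) \cong L^1(X \times Y)$ together with the realisation of $W^{k,1}$ as an isometric closed subspace of a finite direct sum of $L^1$-spaces.

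The first step is globalisation. The product $M \times N$ is itself a manifold of bounded geometry, and Lemma \ref{lem:nice_coverings_partitions_of_unity} applied to $M$ and $N$ separately produces, by taking products, a uniform cover of $M \times N$ together with the subordinate partition of unity $\{\varphi_i(x)\psi_j(y)\}_{i,j}$. Inserting this product partition into the local Sobolev-norm characterisation \eqref{eq:sobolev_norm_local} realises $\Winftyone(M)$, $\Winftyone(N)$ and $\Winftyone(M \times N)$ as $\ell^1$-type Fr\'{e}chet spaces of sequences of compactly supported functions on Euclidean balls (respectively, on products of such balls). Since projective tensor products commute with $\ell^1$-sums in the relevant sense, the global statement reduces to the Euclidean analogue on a pair of fixed normal-coordinate balls $U \subset \IR^m$ and $V \subset \IR^n$.

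For the Euclidean piece I would establish, for each fixed pair of orders $k$ and $l$, the Banach-space identity
\[W^{k,1}_c(U) \hatotimes W^{l,1}_c(V) \cong W^{(k,l),1}_c(U \times V),\]
where the right-hand side denotes the space of compactly supported functions all of whose mixed partials $\partial_x^\alpha \partial_y^\beta$ with $|\alpha|\le k$ and $|\beta|\le l$ lie in $L^1$. One inclusion is automatic because the projective tensor seminorm majorises each seminorm $\|\partial_x^\alpha\partial_y^\beta(\cdot)\|_{L^1}$; the other follows by combining the isometric embedding $f \mapsto (\partial^\alpha f)_{|\alpha|\le k}$ of $W^{k,1}_c$ into a finite direct sum of $L^1$-spaces with Grothendieck's isometric identity $L^1(U) \hatotimes L^1(V) \cong L^1(U \times V)$. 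Passing to the Fr\'{e}chet projective limit in $k$ and $l$, and noting that any total-order-$k$ derivative on $\IR^{m+n}$ splits as $\partial_x^\alpha \partial_y^\beta$ with $|\alpha|,|\beta|\le k$ so that $\bigcap_{k,l} W^{(k,l),1}_c = W^{\infty,1}_c$, yields the local version, which reassembled via the partition of unity gives the lemma.

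The main obstacle is the interaction of the projective tensor product with closed subspaces: the embedding $W^{k,1} \hookrightarrow \bigoplus_{|\alpha|\le k} L^1$ is not a priori complemented, so the naturally induced map on tensor products need not be injective. I would circumvent this by computing the projective tensor norm on the dense subspace $C_c^\infty(U) \otimesalg C_c^\infty(V) \subset W^{(k,l),1}_c(U \times V)$, where each derivative component $\partial_x^\alpha \partial_y^\beta$ is controlled directly by Grothendieck's theorem, and then completing in the $W^{(k,l),1}_c$ topology.
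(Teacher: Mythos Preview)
Your overall strategy---localise via the bounded-geometry partition of unity and then invoke Grothendieck's identity $L^1(U)\hatotimes L^1(V)\cong L^1(U\times V)$---is genuinely different from the paper's, which works globally without any partition-of-unity reduction. Both routes, however, hinge on the same difficulty, which you correctly isolate: the isometric embedding $W^{k,1}\hookrightarrow\bigoplus_{|\alpha|\le k}L^1$ is not known to be complemented, so one cannot simply pull the projective-tensor identity for $L^1$ back through it.

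The gap is in your proposed circumvention. Saying that on $C_c^\infty(U)\otimesalg C_c^\infty(V)$ ``each derivative component $\partial_x^\alpha\partial_y^\beta$ is controlled directly by Grothendieck's theorem'' only gives you, for each fixed pair $(\alpha,\beta)$, \emph{some} representation $f=\sum_i x_i^{(\alpha,\beta)}\otimes y_i^{(\alpha,\beta)}$ with $\sum_i\|\partial_x^\alpha x_i^{(\alpha,\beta)}\|_{L^1}\,\|\partial_y^\beta y_i^{(\alpha,\beta)}\|_{L^1}$ small. But the projective norm $\|f\|_{W^{k,1}\hatotimes W^{l,1}}$ is an infimum over \emph{single} representations $f=\sum_i x_i\otimes y_i$ for which \emph{all} the derivatives of the $x_i$ and $y_i$ are simultaneously controlled. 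There is no mechanism in your sketch for merging the $(\alpha,\beta)$-dependent representations into one, and working on a dense subspace does not help: the projective norm on the algebraic tensor product is already defined by the same infimum, so density buys nothing here.

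The paper supplies precisely the missing idea. For a \emph{compactly supported} vector field $X$, differentiation $\nabla_X$ and integration along $X$ set up a bijection between the set of representations of $v=\sum_i s_i\otimes t_i$ and the set of representations of $\sum_i(\nabla_X s_i)\otimes t_i$. Consequently the infimum computing $\inf\{\sum\|\nabla_X x_i\|_{L^1}\|y_i\|_{L^1}\}$ over representations of $v$ coincides with the $L^1\hatotimes L^1$-norm of $\nabla_X v$, which Grothendieck bounds by $C\|v\|_{W^{1,1}(M\times N)}$. Adding this to the undifferentiated term and iterating in the order of derivatives yields the reverse continuity $\Winftyone(M\times N)\to\Winftyone(M)\hatotimes\Winftyone(N)$. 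This bijection-of-representations device (or an equivalent argument producing a single good representation) is what your proof needs; without it the estimate $W^{(k,l),1}_c\to W^{k,1}_c\hatotimes W^{l,1}_c$ remains unproved.
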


\begin{proof}
This proof is an elaboration of P.~Michor's answer \cite{MO_michor} on MathOverflow. The reference that he gives is \cite{michor_book}: combining the Theorem on p.~78 in it with Point (c) on top of the same page we get the isomorphism $L^1(M) \hatotimes L^1(N) \cong L^1(M \times N)$. This result was first proven by Chevet \cite{chevet}.

Now let us generalize this to incorporate derivatives. In \cite[End of Section 6]{kriegl_michor_rainer} it is proven\footnote{To be concrete, they proved it only for Euclidean space, but the argument is the same for manifolds of bounded geometry.} that we have a continuous inclusion $\Winftyone(M) \hatotimes \Winftyone(N) \to \Winftyone(M \times N)$. Note that we have to use \cite[Th\'{e}or\`{e}me 1 on p.~124]{chevet} to conclude that the family of seminorms used in \cite{kriegl_michor_rainer} for $\Winftyone(M) \hatotimes \Winftyone(N)$ generates indeed the projective tensor product topology.

It remains to show that $\Winftyone(M \times N) \to \Winftyone(M) \hatotimes \Winftyone(N)$ is continuous. For this we will use the fact that we may represent the projective tensor product norm on the algebraic tensor product $E \otimes_{\mathrm{alg}} F$ of two Banach spaces by
\begin{equation*}
\|u\|_{E \hatotimes F} = \inf \Big\{ \sum \|x_i\|_E \|y_i\|_F \Big\},
\end{equation*}
where the infimum ranges over all representations $u=\sum_i x_i \otimes y_i$. In our case now note that we have for $w := \sum_i (\nabla_X p_i) \otimes q_i$, where $X$ is a vector field on $M$ with $\|X\|_\infty \le 1$, the chain of inequalities
\begin{align}
\|w\|_{L^1(M) \hatotimes L^1(N)} & = \left\| \sum (\nabla_X p_i) \otimes q_i \right\|_{L^1(M) \hatotimes L^1(N)}\notag\\
& \le C \left\| \sum (\nabla_X p_i) \cdot q_i \right\|_{L^1(M\times N)}\notag\\
& \le C \|\sum p_i \cdot q_i\|_{W^{1,1}(M\times N)},\label{eq:proj_tensor_prod}
\end{align}
where the first inequality comes from the fact $L^1(M) \hatotimes L^1(N) \cong L^1(M \times N)$ which we already know. Now for $v := \sum_i s_i \otimes t_i$ we have
\begin{align}
\|v\|_{W^{1,1}(M) \hatotimes L^1(N)} & = \Big\| \sum s_i \otimes t_i \Big\|_{W^{1,1}(M) \hatotimes L^1(N)}\label{eq:proj_tensor_2}\\
& = \inf \Big\{ \sum \big( \|x_i\|_{L^1(M)} + \|\nabla x_i\|_{L^1(M)} \big) \|y_i\|_{L^1(N)} \Big\}\notag\\
& = \underbrace{\inf \Big\{ \sum \|x_i\|_{L^1(M)} \|y_i\|_{L^1(N)} \Big\}}_{= \|v\|_{L^1(M) \hatotimes L^1(N)} \le C\|v\|_{L^1(M \times N)}} + \inf \Big\{ \sum \|\nabla x_i\|_{L^1(M)} \|y_i\|_{L^1(N)} \Big\},\notag
\end{align}
where the infima run over all representations $\sum_i x_i \otimes y_i$ of $v$. Furthermore, for a fixed compactly supported vector field $X$ with $\|X\|_\infty \le 1$ we have
\begin{equation}
\label{eq:infima_equal}
\inf_{\mathcal{A}} \Big\{ \sum \|\nabla_X x_i\|_{L^1(M)} \|y_i\|_{L^1(N)} \Big\} = \inf_{\mathcal{B}} \Big\{ \sum \|e_i\|_{L^1(M)} \|f_i\|_{L^1(N)} \Big\},
\end{equation}
where $\mathcal{A}$ is the set of all representations $\sum_i x_i \otimes y_i$ of $v = \sum_i s_i \otimes t_i$ and $\mathcal{B}$ the set of all representations $\sum_i e_i \otimes f_i$ of $\sum_i (\nabla_X s_i) \otimes t_i$. This equality holds because every element of $\mathcal{A}$ gives rise to an element of $\mathcal{B}$ by deriving the first component and also vice versa by integrating it. By Inequality \eqref{eq:proj_tensor_prod} we now get that the infima in Equation \eqref{eq:infima_equal} are less than or equal to $C\|v\|_{W^{1,1}(M \times N)}$. Since this holds for any vector field $X$ with $\|X\|_\infty \le 1$ we can combine it now with Estimate \eqref{eq:proj_tensor_2} to get
\[\|v\|_{W^{1,1}(M) \hatotimes L^1(N)} \le 2C\|v\|_{W^{1,1}(M \times N)}.\]

We iterate the argument to get estimates for all higher derivatives and also for the second component. This proves the claim that the map $\Winftyone(M \times N) \to \Winftyone(M) \hatotimes \Winftyone(N)$ is continuous and therefore completes the whole proof.
\end{proof}

\begin{thm}
For any Riemannian manifold $M$ of bounded geometry and without boundary the map $\alpha\colon \HHucont^p(\Winftyone(M)) \to \Omega_p^u(M)$ is an isomorphism for all $p$.
\end{thm}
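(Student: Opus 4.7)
The plan is to adapt Connes' Hochschild-Kostant-Rosenberg computation for $C^\infty(M)$ to the Sobolev setting, using the tensor product lemma just proved to identify the Hochschild cochains as uniform currents on Cartesian powers.

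First I would iterate Lemma~\ref{lem:tensor_prod_sobolev} to obtain
\[\Winftyone(M)^{\hatotimes(p+1)} \cong \Winftyone(M^{p+1}),\]
so that $\Cucont^p(\Winftyone(M)) = \Hom(\Winftyone(M^{p+1}), \IC)$ is identified with the space of uniform distributions (currents of degree $0$) on $M^{p+1}$. Under this identification the Hochschild boundary $b$ becomes the standard simplicial boundary with diagonal face maps, and the antisymmetrization map $\alpha$ becomes the restriction-to-diagonal combined with antisymmetrization in the last $p$ arguments.

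Second I would construct an explicit one-sided inverse at the cochain level. Define $\beta\colon \Omega_p^u(M)\to \Cucont^p(\Winftyone(M))$ by
\[\beta(T)(f_0,f_1,\ldots,f_p) := T\bigl(f_0\, df_1\wedge\cdots\wedge df_p\bigr).\]
Continuity follows because $(f_0,\ldots,f_p)\mapsto f_0\,df_1\wedge\cdots\wedge df_p$ is jointly continuous from $\Winftyone(M)^{\hatotimes(p+1)}$ to $\Winftyone(\Omega^p(M))$ (bounded geometry bounds the $C_b^\infty$-norms of the products and exterior derivatives needed in the Leibniz-expanded Sobolev norm). A direct application of the Leibniz rule and $d^2=0$ shows $b\beta(T)=0$, and applying $\alpha$ to $\beta(T)$ on an element $f_0\,df_1\wedge\cdots\wedge df_p$ produces a sum whose $p!$ antisymmetric terms all coincide with $T(f_0\,df_1\wedge\cdots\wedge df_p)$, giving $\alpha\circ\beta=\id$ on $\Omega^u_p(M)$. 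In particular $\alpha$ is surjective.

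Third, and this is the main work, I would show that $\beta\circ\alpha$ is chain-homotopic to the identity on $\Cucont^\ast(\Winftyone(M))$, by the continuous uniform analogue of Connes' HKR homotopy. The strategy follows \cite[III.2.$\alpha$]{connes_book}: write a general cochain $\varphi$ as a uniform distribution on $M^{p+1}$, localize it via a tensor-product partition of unity $\{\varphi_{i_0}(x_0)\cdots \varphi_{i_p}(x_p)\}$ coming from Lemma~\ref{lem:nice_coverings_partitions_of_unity}, and on each chart use the Koszul homotopy for the exterior algebra of $\IR^m$ to push the cochain onto the antisymmetric-on-the-diagonal part (which is the image of $\beta$). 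The homotopy operator $h\colon \Cucont^p\to \Cucont^{p-1}$ is constructed by integration along the standard simplex of a Taylor-type contraction of the non-diagonal directions; $bh+hb=\id-\beta\circ\alpha$ holds by the classical HKR computation carried out pointwise on each chart.

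The main obstacle is uniform continuity: the local homotopy operator $h$ and the corresponding chart-by-chart decompositions must assemble to a globally continuous operator on $\Cucont^\ast(\Winftyone(M))$. This is exactly where the bounded geometry hypothesis is essential. One checks that the transition functions between normal coordinate charts are uniformly $C_b^\infty$-bounded (Lemmas~\ref{lem:transition_functions_uniformly_bounded} and~\ref{lem:equiv_characterizations_bounded_geom_bundles}), that the partition of unity from Lemma~\ref{lem:nice_coverings_partitions_of_unity} has uniformly bounded derivatives, and that the tensor-product partition on $M^{p+1}$ therefore induces a uniformly bounded decomposition of $\Winftyone(M^{p+1})$ compatible with the iterated isomorphism from Lemma~\ref{lem:tensor_prod_sobolev}. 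With these uniform estimates in hand, the Koszul homotopy on each $\IR^m$-chart patches into a continuous endomorphism of $\Cucont^\ast(\Winftyone(M))$, and the identity $bh+hb=\id-\beta\circ\alpha$ yields $\beta\circ\alpha=\id$ on $\HHucont^\ast$, completing the proof that $\alpha$ is an isomorphism.
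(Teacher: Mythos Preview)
Your route diverges substantially from the paper's. The paper does \emph{not} build a chart-local chain homotopy; it adapts Connes' proof of Lemma~45a in \cite{connes_noncomm_diff_geo}, which proceeds via an explicit topological projective resolution of $\Winftyone(M)$ as a $\Winftyone(M\times M)$-module by the modules $\mathcal{M}_k=\Winftyone(M\times M,E_k)$ with $E_k=\Lambda^k T^*_\IC M$. The contracting homotopy of that Koszul-type resolution is built from a single \emph{global} nowhere vanishing unit vector field with bounded derivatives. That such a field exists is a genuine obstruction: the paper invokes Weinberger's theorem (existence iff the Euler class vanishes in $\HbdR^m(M)$) and, when it does not vanish, the $S^1$-product trick. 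Topological projectivity of the $\mathcal{M}_k$ uses that vector bundles of bounded geometry are $C_b^\infty$-complemented (Proposition~\ref{prop:every_bundle_complemented}). Lemma~\ref{lem:tensor_prod_sobolev} enters only to identify $\Winftyone(M)\hatotimes\Winftyone(M)$ with $\Winftyone(M\times M)$; the iterated identification with $\Winftyone(M^{p+1})$ is not used.

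Your third step, as written, has a real gap. Decomposing a cochain by the tensor-product partition $\varphi_{i_0}(x_0)\cdots\varphi_{i_p}(x_p)$ produces pieces indexed by tuples $(i_0,\ldots,i_p)$, and only the ``near-diagonal'' pieces (all $x_j$ lying in a single common chart) admit a local Taylor/Koszul contraction on $\IR^m$. You give no argument for the far-off-diagonal pieces; showing that these are Hochschild coboundaries in a way that respects the $\Winftyone$-topology is a separate and nontrivial step. Even on the near-diagonal pieces, the local contracting homotopies depend on the chosen chart (each uses, in effect, the radial Euler field of that copy of $\IR^m$) and do not agree on overlaps, so summing them with the partition of unity does not produce a globally defined operator with $bh+hb=\id-\beta\alpha$; the cross-terms from $b$ acting on the cutoff functions do not cancel. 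The paper's use of one global bounded vector field is exactly what makes the contracting homotopy globally well defined and continuous---at the price of having to confront the Euler-class obstruction and the $S^1$ trick, both of which your sketch omits.
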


\begin{proof}
The proof is analogous to the one given in \cite[Lemma 45a on page 128]{connes_noncomm_diff_geo} for the case of compact manifolds. We describe here only the places where we have to adjust it for non-compact manifolds.

The proof in \cite{connes_noncomm_diff_geo} relies heavily on Lemma 44 there. First note that direct sums, tensor products and duals of vector bundles of bounded geometry are again of bounded geometry. Since the tangent and cotangent bundle of a manifold of bounded geometry have, of course, bounded geometry, the bundles $E_k$ occuring in Lemma 44 of \cite{connes_noncomm_diff_geo} have bounded geometry.

Furthermore, \cite[Lemma 44]{connes_noncomm_diff_geo} needs a nowhere vanishing vector field on $M$, and since we are working here in the bounded geometry setting we need for our proof a nowhere vanishing vector field of norm one at every point and with bounded derivatives. Since we can without loss of generality assume that our manifold is non-compact (otherwise we are in the usual setting where the result that we want to prove is already known), we can always contruct a nowhere vanishing vector field on $M$: we just pick a generic vector field with isolated zeros and then move the vanishing points to infinity. But if we normalize this vector field to norm one at every point, then it will usually have unbounded derivatives (since we moved the vanishing points infinitely far, i.e., we disturbed the derivatives arbitrarily large). Fortunately, Weinberger proved in \cite[Theorem 1]{weinberger_euler} that on a manifold $M$ of bounded geometry a nowhere vanishing vector field of norm one and with bounded derivatives exists if and only if the Euler class $e(M) \in \HbdR^m(M)$ vanishes (the latter group denotes the top-dimensional bounded de Rham cohomology of $M$; see Definition \ref{defn:bounded_de_rham_coho}). So if the Euler class of $M$ vanishes, we are ok and can move on with our proof. If the Euler class does not vanish, then we have to use the same trick that already Connes used to prove Lemma 45a in \cite{{connes_noncomm_diff_geo}}: we take the product with $S^1$.

Moreover, we need the isomorphism $\Winftyone(M) \hatotimes \Winftyone(M) \cong \Winftyone(M \times M)$. This is exactly the content of the above Lemma \ref{lem:tensor_prod_sobolev}.

The fact that the modules $\mathcal{M}_k =\Winftyone(M \times M, E_k)$ are topologically projective, i.e., are direct summands of topological modules of the form $\mathcal{M}^\prime_k = \Winftyone(M \times M) \hatotimes \mathcal{E}_k$, where $\mathcal{E}_k$ are complete locally convex vector spaces, follows from the fact that every vector bundle $F$ of bounded geometry is $C_b^\infty$-complemented, i.e., there is a vector bundle $G$ of bounded geometry such that $F \oplus G$ is $C_b^\infty$-isomorphic to a trivial bundle with the flat connection. This is stated in Proposition \ref{prop:every_bundle_complemented}.

With the above notes in mind, the proof of \cite[Lemma 45a on page 128]{connes_noncomm_diff_geo} for the case of compact manifolds works also for non-compact manifolds in our setting here. If there are constructions to be done in the proof we have to do them uniformly (e.g., controlling derivatives uniformly in the points of the manifold) by using the bounded geometry of $M$.
\end{proof}

The inverse map $\beta\colon \Omega^u_p(M) \to \HHucont^p(\Winftyone(M))$ of $\alpha$ is given by
\[\beta(C)(f_0, f_1, \ldots, f_p) = C(f_0 d f_1 \wedge \ldots \wedge d f_p).\]

Now the proofs of Lemma 45b and Theorem 46 in \cite{connes_noncomm_diff_geo} translate without change to our setting here so that we finally get:

\begin{thm}\label{thm:computation_cyclic_cohomology}
Let $M$ be a Riemannian manifold with bounded geometry and no boundary.

For each $n \in \IN_0$ the continuous cyclic cohomology $\HCucont^n(\Winftyone(M))$ is canonically isomorphic to
\[Z_n^u(M) \oplus \HudR_{n-2}(M) \oplus \HudR_{n-4}(M) \oplus \ldots,\]
where $Z_n^u(M) \subset \Omega_n^u(M)$ is the subspace of closed currents.

The periodicity operator $S\colon \HCucont^n(\Winftyone(M)) \to \HCucont^{n+2}(\Winftyone(M))$ is given under the above isomorphism as the map that sends cycles of $Z_n^u(M)$ to their homology classes.

And last, since periodic cyclic cohomology is the direct limit of cyclic cohomology, we finally get
\[\alpha_\ast\colon \HPucont^{\mathrm{ev/odd}}(\Winftyone(M)) \stackrel{\cong}\longrightarrow \HudR_{\mathrm{ev/odd}}(M).\]
We denote this isomorphism by $\alpha_\ast$ since it is induced from the map $\alpha$ defined above.
\end{thm}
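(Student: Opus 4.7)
The plan is to mimic Connes' computation of the cyclic cohomology of $C^\infty(M)$ for compact $M$ (\cite[Lemma~45b and Theorem~46]{connes_noncomm_diff_geo}), using as input the Hochschild identification $\alpha\colon \HHucont^p(\Winftyone(M)) \xrightarrow{\cong} \Omega_p^u(M)$ established just above. The argument is purely homological once two standard facts are verified for the non-unital \Frechet algebra $A = \Winftyone(M)$: first, that the Connes SBI long exact sequence
\[\cdots \to \HCucont^{n-1}(A) \xrightarrow{S} \HCucont^{n+1}(A) \xrightarrow{I} \HHucont^{n+1}(A) \xrightarrow{B} \HCucont^{n}(A) \xrightarrow{S} \HCucont^{n+2}(A) \to \cdots\]
exists in the present continuous setting; and second, that under $\alpha$ the Connes boundary $B$ corresponds, up to a universal normalization, to the dual de~Rham differential $d\colon \Omega^u_{p+1}(M) \to \Omega^u_{p}(M)$ on uniform currents.

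Both inputs carry over from the compact case with only bookkeeping. The SBI sequence is a general fact for complete locally convex algebras with jointly continuous multiplication, which $\Winftyone(M)$ is via Lemma~\ref{lem:tensor_prod_sobolev}; our convention of not passing to the closure of $\image(b)$ is compatible with the continuous version of the cyclic/Hochschild bicomplex used by Connes. The identification $B \leftrightarrow d$ is the short explicit computation combining the cochain formula for $B$ with the antisymmetrization defining $\alpha$, exactly as in \cite[Lemma~45b]{connes_noncomm_diff_geo}; no step of that calculation uses compactness of $M$.

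Granting these, the decomposition
\[\HCucont^n(\Winftyone(M)) \cong Z^u_n(M) \oplus \HudR_{n-2}(M) \oplus \HudR_{n-4}(M) \oplus \cdots\]
follows by induction on $n$. The top summand $Z^u_n(M)$ is precisely the image of $I$, because under $\alpha$ it consists of those currents annihilated by the subsequent $B$, i.e.\ the closed uniform $n$-currents. The remaining summands come from $S(\HCucont^{n-2}(A))$, which by the inductive hypothesis is $\HudR_{n-2}(M) \oplus \HudR_{n-4}(M) \oplus \cdots$. The stated description of $S$ as "send a closed current to its uniform de~Rham homology class" is then read off directly from the connecting portion of the SBI sequence once $B$ is identified with $d$. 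Passing to the direct limit, the unstable summand $Z^u_n(M)$ collapses onto $\HudR_n(M)$ at each application of $S$, which then appears as the new top $\HudR$-summand of $\HCucont^{n+2}(\Winftyone(M))$; since $M$ has finite dimension $m$, only finitely many degrees contribute and the limit stabilizes to give $\HPucont^{\mathrm{ev/odd}}(\Winftyone(M)) \cong \HudR_{\mathrm{ev/odd}}(M)$.

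The main obstacle is the first bullet above: setting up the SBI sequence cleanly in our specific functional-analytic framework (continuous cochains on a non-unital \Frechet algebra with completed projective tensor products) and checking that the standard chain homotopies witnessing exactness descend to our non-closed quotients. Once this foundational piece is in place, the identification $B = d$ is a short direct calculation and the rest is diagrammatic, with the bounded-geometry structure of $M$ entering only through the Hochschild identification already established.
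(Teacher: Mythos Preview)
Your proposal is correct and follows essentially the same route as the paper. The paper's own argument is a single sentence deferring to \cite[Lemma~45b and Theorem~46]{connes_noncomm_diff_geo} and asserting that those proofs ``translate without change'' to the present setting; you have unpacked exactly that translation---the SBI long exact sequence, the identification of $B$ with the dual de~Rham differential under $\alpha$, and the induction on $n$---which is precisely the content of the cited lemma and theorem.
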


Let us now get to the dual cohomology theory to uniform de Rham homology.

\begin{defn}[Bounded de Rham cohomology]
\label{defn:bounded_de_rham_coho}
Let $\Omega_b^p(M)$ denote the vector space of $p$-forms on $M$, which are bounded in the norm
\[\|\gamma\| := \sup_{x \in M} \{ \| \gamma(x) \| + \| d \gamma(x) \| \}.\]
The \emph{bounded de Rham cohomology} $\HbdR^\ast(M)$ is defined as the homology of the corresponding complex.
\end{defn}

For an oriented manifold the \Poincare duality map between bounded de Rham cohomology and uniform de Rham homology is defined as the map induced by the following map on forms:
\begin{equation}
\label{eq_PD_coho}
\Omega^p_b(M) \to \Omega^u_{m-p}(M), \ \gamma \mapsto \big( \omega \mapsto \int_M \omega \wedge \gamma \big).
\end{equation}

\begin{thm}\label{thm:Poincare_de_Rham}
Let $M^m$ be an oriented Riemannian manifold of bounded geometry and without boundary.

Then the \Poincare duality map \eqref{eq_PD_coho} induces an isomorphism
\[\HbdR^\ast(M) \stackrel{\cong}\longrightarrow \HudR_{m-\ast}(M)\]
between bounded de Rham cohomology of $M$ and uniform de Rham homology of $M$.
\end{thm}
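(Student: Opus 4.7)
The plan is to prove this by a Mayer--Vietoris induction along a good cover obtained from Lemma~\ref{lem:suitable_coloring_cover_M}. Writing $M = U_1 \cup \ldots \cup U_N$ according to Property~\ref{cover_ii}, each color $U_j$ is a disjoint union of uniformly separated chart-like pieces, and the symmetric differences of the $U_j$ inherit the same uniform separation structure; consequently every multi-intersection arising during a Mayer--Vietoris induction is again a uniformly separated disjoint union of bounded chart-like pieces. The base case is a single color: for $U = \bigsqcup_i V_i$ a disjoint union of uniformly bi-Lipschitz equivalent bounded convex charts with uniformly positive pairwise distances, a bounded \Poincare lemma on each $V_i$ (with constants uniform by bi-Lipschitz control) yields $\HbdR^0(U) \cong \ell^\infty(I)$ and $\HbdR^p(U) = 0$ for $p > 0$. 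The same uniform separation causes $\Winftyone(\Omega^\ast(U))$ to split as an $\ell^1$-type direct sum of the $\Winftyone(\Omega^\ast(V_i))$, so $\HudR_p(U)$ becomes an $\ell^\infty$-product of the pointwise homologies. Classical \Poincare duality on each bounded contractible $V_i$ then yields an isomorphism, and one checks that the map~\eqref{eq_PD_coho} realizes it.

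For the inductive step, assume \Poincare duality holds on $U' := U_1 \cup \ldots \cup U_{k-1}$, on $U_k$, and on $U' \cap U_k$ (whose pieces still satisfy Property~\ref{cover_ii}). Using bounded geometry one constructs a smooth partition of unity $\{\rho_{U'}, \rho_{U_k}\}$ subordinate to $\{U', U_k\}$ with uniformly bounded derivatives. From this one obtains short exact sequences
\[0 \to \Omega_b^\ast(U' \cup U_k) \to \Omega_b^\ast(U') \oplus \Omega_b^\ast(U_k) \to \Omega_b^\ast(U' \cap U_k) \to 0\]
together with its Sobolev counterpart (with $\Omega_b^\ast$ replaced by $\Winftyone(\Omega^\ast)$) via the standard patching argument, giving Mayer--Vietoris long exact sequences for $\HbdR^\ast$ and, after dualizing the Sobolev one, for $\HudR_\ast$. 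Naturality of wedge-and-integrate shows that~\eqref{eq_PD_coho} induces a map between these long exact sequences, and the Five Lemma propagates the duality isomorphism from $U'$, $U_k$, $U' \cap U_k$ to $U' \cup U_k$.

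The main technical difficulty is establishing the Mayer--Vietoris sequence for $\HudR_\ast$, that is, verifying that the Sobolev short exact sequence is strict as a sequence of \Frechet spaces so that it remains exact upon taking continuous duals. This requires the cut-off operations $\omega \mapsto \rho_{U'}\omega$ and $\omega \mapsto \rho_{U_k}\omega$ to be continuous on the relevant $\Winftyone$-spaces with norm bounds independent of the inductive step, and extension of $\rho_{U_k}\omega$ by zero across $\partial(U' \cap U_k)$ within $U'$ to actually yield a form of class $\Winftyone$. Property~\ref{cover_ii} is essential here: the uniform separation of the pieces of $U' \Delta U_k$ allows the partition-of-unity functions to be chosen to be identically $0$ or $1$ on uniformly large neighborhoods of those pieces, ruling out uncontrolled derivatives at the patching loci and giving uniform bounds on all cut-off operators. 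A secondary technical point is the uniform bounded \Poincare lemma invoked in the base case, but this reduces to applying the classical Cartan homotopy operator on each model ball and transporting the resulting estimates through the uniform bi-Lipschitz equivalence.
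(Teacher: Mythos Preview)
Your proposal is correct and follows essentially the same Mayer--Vietoris strategy as the paper's proof: both use the cover of Lemma~\ref{lem:suitable_coloring_cover_M} with Property~\ref{cover_ii}, construct partitions of unity with uniformly bounded derivatives to obtain the short exact sequences, verify naturality of the duality map~\eqref{eq_PD_coho} between the resulting long exact sequences, and reduce the base case to uniformly separated collections of chart-balls where the compact theory applies. You are in fact more explicit than the paper about the topological exactness (strictness) needed to dualize the $\Winftyone$-sequence for $\HudR_\ast$---the paper dismisses this as ``similar'' to the $\HbdR^\ast$ case---and your identification of Property~\ref{cover_ii} as the mechanism guaranteeing uniform control of the cut-off operators is exactly the point.
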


\begin{proof}
We will do a Mayer--Vietoris induction, similar as in the proof of \Poincare duality between uniform $K$-theory and uniform $K$-homology in \cite[Section 4.4]{engel_indices_UPDO}.

We invoke Lemma~\ref{lem:suitable_coloring_cover_M} to get a cover of $M$ by open subsets having Properties~\ref{cover_i} and~\ref{cover_ii} from that lemma.\footnote{With the additional property that the boundaries of the open subsets are smooth, but it is clear that we can arrange this.} We use the notation $U_j$ and $U_K$ from it, and the induction will be over the index $j$ (and hence the proof will only consist of finitely many induction steps).

We have to show that we have the Mayer--Vietoris sequences. The arguments are the same as in the case of compact manifolds, and we will only mention where we have to be cautios because we are working in the setting of uniform theories. We will only discuss the case of bounded de Rham cohomology, since the additional arguments (because of the uniform situation) in the case of uniform de Rham homology are similar.

For bounded de Rham cohomology we have to show that the following sequence is exact in order to get a Mayer--Vietoris sequence:
\begin{equation}
\label{eq_MV_de_Rham}
0 \to \Omega_{b}^*(U_K \cup U_{k+1}) \to \Omega_{b}^*(U_K) \oplus \Omega_{b}^*(U_{k+1}) \to \Omega_{b}^*(U_K \cap U_{k+1}) \to 0.
\end{equation}
The crucial step is to show that the map $\Omega_{b}^*(U_K) \oplus \Omega_{b}^*(U_{k+1}) \to \Omega_{b}^*(U_K \cap U_{k+1})$ is surjectice. The usual argument in the case of compact manifolds uses a partition of unity, and here we have to make sure now that the partition of unity has uniformly bounded derivatives of all orders. The reason that we can construct such a partition of unity here is because of Property~\ref{cover_ii} of Lemma~\ref{lem:suitable_coloring_cover_M}.

That the above defined \Poincare duality map \eqref{eq_PD_coho} is a natural transformation from one Mayer--Vietoris sequence to the other may be proved analogously as in the case of compact manifolds; see, e.g., \cite[Exercise 16-6]{lee_smooth}.

And finally, let us discuss the first step of the induction. We have collections $U_1$, $U_2$ and $U_1 \cap U_2$ which are each a uniformly disjoint union of open subsets of $M$ which have a uniform bound on their diameters. So all three sets are boundedly homotopy equivalent\footnote{\label{footnote:boundedly_homotopic}Let $f, g\colon M \to N$ be two maps of bounded dilatation. We say that they are \emph{boundedly homotopic}, if there is a homotopy $H\colon M \times [0,1] \to N$ from $f$ to $g$, which itself is of bounded dilatation. Recall that a map $h$ has \emph{bounded dilatation}, if $\|h_\ast V\| \le C \|V\|$ for all tangent vectors $V$. Bounded homotopy invariance of bounded de Rham cohomology was shown by the author in \cite[Corollary 5.26]{engel_phd}.} to an infinite collection of open balls, for which we already know from the case of compact manifolds that the \Poincare duality map is an isomorphism.
\end{proof}

Bounded de Rham cohomology does not perfectly fit the setting in this paper since the condition that the exterior derivative of a form is bounded does not imply that in local coordinates the coefficient functions have a uniformly bounded first derivative, and it also does not say anything about the higher derivatives. Hence the following definition and proposition.

\begin{defn}
The \emph{uniform de Rham cohomology $\HudRco^\ast(M)$} of a Riemannian manifold $M$ of bounded geometry is defined by using the complex of uniform $C^\infty$-spaces\footnote{see Definition \ref{defn:uniform_frechet_spaces}} $C^\infty_b(\Omega^\ast(M))$, i.e., differential forms on $M$ which have in normal coordinates bounded coefficient functions and all derivatives of them are also bounded.
\end{defn}

\begin{prop}\label{prop:uniform_bounded_dRco}
Let $M$ be a manifold of bounded geometry and without boundary. Then we have
\[\HudRco^\ast(M) \cong \HbdR^\ast(M).\]
\end{prop}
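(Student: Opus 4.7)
The plan is to mimic the Mayer--Vietoris induction carried out in the proof of Theorem~\ref{thm:Poincare_de_Rham}. Any form in $C_b^\infty(\Omega^\ast(M))$ is in particular bounded with bounded exterior derivative, so the identity on forms defines a chain inclusion $C_b^\infty(\Omega^\ast(M)) \hookrightarrow \Omega_b^\ast(M)$ and hence a natural comparison map $\Phi\colon \HudRco^\ast(M) \to \HbdR^\ast(M)$. The claim is that $\Phi$ is an isomorphism.

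I would first fix a cover $\{U_1, \ldots, U_N\}$ of $M$ with smooth boundaries provided by Lemma~\ref{lem:suitable_coloring_cover_M} satisfying Properties~\ref{cover_i} and~\ref{cover_ii}, and then induct on $k = 1, \ldots, N-1$ by comparing Mayer--Vietoris sequences for the decomposition $U_K \cup U_{k+1}$, where $U_K := U_1 \cup \cdots \cup U_k$. The Mayer--Vietoris sequence for $\HbdR^\ast$ was already established in the proof of Theorem~\ref{thm:Poincare_de_Rham}; the same argument produces one for $\HudRco^\ast$ once one exhibits a smooth partition of unity subordinate to $\{U_K, U_{k+1}\}$ with derivatives bounded uniformly in every order. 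This is exactly what Property~\ref{cover_ii} of Lemma~\ref{lem:suitable_coloring_cover_M} is designed to supply: the components of the symmetric difference $U_K \Delta U_{k+1}$ lie a uniform distance apart, so the transition between $0$ and $1$ can be performed across a gap of uniformly positive width with uniform $C^\infty$-control on all derivatives. Since $\Phi$ manifestly commutes with restriction to open subsets and with connecting homomorphisms, it defines a natural transformation of long exact sequences, and the Five Lemma reduces the problem to the base case $k=1$.

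The base case concerns a uniformly disjoint union $U_1 = \bigsqcup_i V_i$ of open subsets of $M$ of uniformly bounded diameter, each boundedly diffeomorphic to a Euclidean ball. In degree zero the claim is immediate, since a bounded locally constant function automatically belongs to $C_b^\infty$. For $p > 0$, the essential step is a uniform \Poincare lemma on each component: using the standard radial cone--homotopy operator on a Euclidean ball and pulling back via the uniform bi-Lipschitz equivalence $V_i \cong B$, one obtains primitives $\eta_i$ on each $V_i$ with $C^r$-bounds controlled by the $C^{r+1}$-bounds on $\omega|_{V_i}$, the constants being independent of $i$ because the ball diameters and all derivatives of the bi-Lipschitz equivalences are uniformly bounded by virtue of bounded geometry. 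Gluing the $\eta_i$ produces a $C_b^\infty$-primitive on $U_1$, so that both $\HudRco^p$ and $\HbdR^p$ vanish on the base case for $p > 0$ and $\Phi$ is an isomorphism there.

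The main obstacle I anticipate is the uniform \Poincare lemma in the base case --- more precisely, checking that differentiating the explicit cone--homotopy formula under the integral sign propagates $C_b^\infty$-bounds across the bi-Lipschitz identification with a standard ball. This is a careful but routine estimation in normal coordinates, leaning on the bounded geometry of $M$.
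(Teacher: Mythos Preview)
Your proposal is correct and follows essentially the same route as the paper's proof, which simply says the argument is analogous to Theorem~\ref{thm:Poincare_de_Rham} via the Mayer--Vietoris sequences constructed there. You have spelled out more detail than the paper does---in particular the explicit cone--homotopy operator for the base case, where the paper instead appeals to bounded homotopy equivalence with a collection of balls---but the underlying mechanism is the same.
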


\begin{proof}
The proof is analogous to the one of Theorem~\ref{thm:Poincare_de_Rham} --- the important thing is that we have Mayer--Vietoris sequences and the argument given in the proof of Theorem~\ref{thm:Poincare_de_Rham} for bounded de Rham cohomology also applies to uniform de Rham cohomology.
\end{proof}

\begin{thm}[Existence of the uniform Chern character]\label{thm_Chern_character}
Let $M$ be a Riemannian manifold of bounded geometry and without boundary.

Then we have a ring homomorphism $\ch \colon K^\ast_u(M) \to \HudRco^\ast(M)$ with
\[\ch(K^0_u(M)) \subset \HudRco^\ev(M) \text{ and } \ch(K^1_u(M)) \subset \HudRco^\odd(M).\]
\end{thm}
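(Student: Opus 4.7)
The plan is to construct $\ch$ via the classical Chern--Weil recipe, applied inside the uniform category, using Theorems~\ref{thm:interpretation_K0u} and~\ref{thm:interpretation_K1u} as the geometric model for uniform $K$-theory.

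First I would handle the even case. Given a vector bundle $E\to M$ of bounded geometry, equip it with a Hermitian metric and compatible connection $\nabla$ from its bounded-geometry structure; by Lemma~\ref{lem:equiv_characterizations_bounded_geom_bundles} the Christoffel symbols and all their derivatives are uniformly bounded in synchronous framings, so the curvature $R^\nabla\in\Omega^2(M;\End E)$ and all its covariant derivatives lie in $C^\infty_b$. Define
\[
\ch(E,\nabla):=\trace\exp\!\bigl(-R^\nabla/2\pi i\bigr)\in C^\infty_b(\Omega^{\ev}(M)),
\]
which is closed by the Bianchi identity performed fibrewise in normal coordinates.

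Next I would show the $\HudRco^{\ev}(M)$-class is independent of the $C^\infty_b$-connection. Given two such connections $\nabla_0,\nabla_1$, the straight-line homotopy $\widetilde{\nabla}=(1-t)\nabla_0+t\nabla_1$ on the pullback bundle over $[0,1]\times M$ is again $C^\infty_b$; integrating the Chern--Weil form of $\widetilde{\nabla}$ along $[0,1]$ produces an explicit transgression form in $C^\infty_b(\Omega^{\odd-1}(M))$ whose differential equals $\ch(E,\nabla_1)-\ch(E,\nabla_0)$. The same transgression trick shows invariance under $C^\infty_b$-isomorphisms. Setting $\ch([E]-[F]):=\ch(E)-\ch(F)$ and appealing to Theorem~\ref{thm:interpretation_K0u} then produces a well-defined additive map on $K^0_u(M)$; multiplicativity under the internal tensor product follows from the standard identity $R^{\nabla_E\otimes\id+\id\otimes\nabla_F}=R^{\nabla_E}\otimes\id+\id\otimes R^{\nabla_F}$ applied uniformly, with $E\otimes F$ again of bounded geometry.

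For the odd case I would use Theorem~\ref{thm:interpretation_K1u} to represent a class $x\in K^1_u(M)$ by a difference $[E]-[F]$ of bundles of bounded geometry over $S^1\times M$ that are $C_b^\infty$-isomorphic to trivial bundles on some neighbourhood $U\times M$ of $\{1\}\times M$. Since $S^1\times M$ is itself of bounded geometry, the even construction produces a closed form representing $\ch(E)-\ch(F)\in\HudRco^{\ev}(S^1\times M)$ which, after subtracting the (exact) transgression obtained from the trivialisation over $U\times M$, can be arranged to vanish on a smaller neighbourhood of $\{1\}\times M$. Integration along the $S^1$-fibre then delivers a closed form in $C^\infty_b(\Omega^{\odd}(M))$, the required $\ch(x)$; fibre integration preserves $C^\infty_b$-bounds because the fibre is compact.

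The hard part will be the uniform well-definedness. All of the pointwise identities (Bianchi, transgression, multiplicativity) go through exactly as in the compact case, so the genuine work is bookkeeping: making sure each choice---connection, synchronous framing, subordinate partition of unity from Lemma~\ref{lem:nice_coverings_partitions_of_unity}, complementary bundle from Proposition~\ref{prop:every_bundle_complemented}, trivialisation near $\{1\}\times M$---can be made \emph{uniformly} over $M$, so that every intermediate form stays in $C^\infty_b$. This is where the bounded-geometry hypothesis is really used; the compact-manifold proof never has to check such estimates.
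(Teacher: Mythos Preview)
Your proposal is correct and follows essentially the same route as the paper: Chern--Weil theory applied to bounded-geometry connections for $K^0_u$, and the $S^1\times M$ description of $K^1_u$ combined with fibre integration (which is exactly the suspension isomorphism for uniform de Rham cohomology the paper invokes) for the odd case. The paper's own proof is terser---it delegates the uniformity of the Chern--Weil forms to \cite[Theorem~3.8]{roe_index_1} and the odd-case details to the author's thesis---whereas you spell out the transgression, multiplicativity, and fibre-integration steps explicitly; but the underlying argument is the same.
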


\begin{proof}
The Chern character is defined via Chern--Weil theory. That we get uniform forms if we use vector bundles of bounded geometry is proven in \cite[Theorem 3.8]{roe_index_1} and so we get a map $\ch \colon K^0_u(M) \to \HudRco^\ev(M)$. That we also have a map $\ch \colon K^1_u(M) \to \HudRco^\odd(M)$ uses the description of $K^1_u(M)$ from Theorem \ref{thm:interpretation_K1u}, i.e., that it consists of suitable vector bundles over $S^1 \times M$, and a corresponding suspension isomorphism for the uniform de Rham cohomology. Details (for bounded cohomology, but for uniform cohomology it is analogous) may be found in the author's Ph.D.\ thesis \cite[Sections 5.4 \& 5.5]{engel_phd}.
\end{proof}

\subsection{Uniform Chern character isomorphism theorems}\label{sec:chern_isos}

The results of the last two sections tell us that we have constructed Chern characters $K^\ast_u(M) \to \HudRco^\ast(M)$ and $K_\ast^u(M) \to \HudR_\ast(M)$. Here we already use the Corollary~\ref{cor:ch_well_defined} further below which states that the uniform homological Chern character is well-defined. In the compact case the Chern characters are isomorphisms modulo torsion and it is natural to ask the same question here in the uniform setting. It is the goal of this section to answer this question positively.

The proofs use the same Mayer--Vietoris induction as the proof of \Poincare duality in \cite[Section~4.4]{engel_indices_UPDO} and Theorem~\ref{thm:Poincare_de_Rham}. Therefore we will discuss in this section only the parts of the proofs which need additional arguments.

The most crucial detail to discuss here is the statement of the theorem itself since we cannot just take the tensor product of the $K$-groups with the complex numbers to get isomorphisms. In turns out that we additionally have to form a certain completion of the algebraic tensor product of the $K$-groups with $\IC$. We will discuss this completion directly after the statement of the theorem.

\begin{thm}\label{thm23w2}
Let $M$ be a manifold of bounded geometry and without boundary. Then the Chern characters induce linear, continuous isomorphisms\footnote{The inverse maps are in general not continuous, because $\HudRco^\ast(M)$, respectively $\HudR_\ast(M)$, are in general (e.g., if $M$ is not compact) not Hausdorff, whereas $K_u^\ast(M) \barotimes \IC$, respectively $K^u_\ast(M) \barotimes \IC$, are. The topology on the latter spaces is defined by equipping the $K$-groups with the discrete topology and then forming the completed tensor product with $\IC$ which will be discussed after the statement of the theorem.}
\[K_u^\ast(M) \barotimes \IC \cong \HudRco^\ast(M) \text{ and } K^u_\ast(M) \barotimes \IC \cong \HudR_\ast(M).\]
\end{thm}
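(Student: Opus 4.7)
The plan is to imitate the Mayer--Vietoris induction used for the \Poincare duality Theorem~\ref{thm:Poincare_de_Rham} and for uniform $K$-\Poincare duality in \cite[Section~4.4]{engel_indices_UPDO}. Before starting the induction one must fix the meaning of $K^\ast_u(M) \barotimes \IC$: give the countable abelian groups $K^\ast_u(M)$ and $K_\ast^u(M)$ the discrete topology and form the completed projective tensor product with $\IC$. In effect this allows bounded $\ell^\infty$-linear combinations of $K$-classes, which is exactly what is needed because on a uniformly disjoint collection $\{V_i\}_{i \in I}$ of bounded pieces the natural $K$-theoretic invariant has the form $\ell^\infty(I, \IZ)$, whereas the cohomological Chern character lands in $\ell^\infty(I, \IC)$-valued forms; the latter is exactly $\ell^\infty(I, \IZ) \barotimes \IC$, but strictly larger than $\ell^\infty(I, \IZ) \otimes_\IZ \IC$.

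First I would invoke Lemma~\ref{lem:suitable_coloring_cover_M} to obtain a coloring of $M$ by subsets $U_1, \ldots, U_N$ satisfying Properties~\ref{cover_i} and~\ref{cover_ii}, and set $U_K := U_1 \cup \cdots \cup U_K$. The induction runs over $K$. For the inductive step I need the four Mayer--Vietoris sequences of $K^\ast_u$, $K_\ast^u$, $\HudRco^\ast$ and $\HudR_\ast$ applied to the decomposition $U_K \cup U_{K+1}$, together with the Chern characters as natural transformations between them. Those for the uniform $K$-theories are established in \cite[Section~4.4]{engel_indices_UPDO}, while for bounded/uniform de Rham cohomology the argument is the one given in the proof of Theorem~\ref{thm:Poincare_de_Rham} (here Property~\ref{cover_ii} is essential since it supplies partitions of unity with uniformly bounded derivatives of all orders), and for uniform de Rham homology one dualizes. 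Since $\barotimes\,\IC$ is exact on the Mayer--Vietoris sequences of the discrete $K$-groups (tensoring by the field $\IC$ is flat, and the relevant $\ell^\infty$-completion respects the sequence), we obtain a commutative ladder of six-term exact sequences, and the 5-lemma reduces everything to the base case $K=1$.

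The base case concerns $U_1$, which by construction is a uniformly disjoint union $\bigsqcup_{i \in I} V_i$ of open subsets of uniformly bounded diameter. By bounded homotopy invariance of all four theories (for bounded de Rham cohomology this is cited in the proof of Theorem~\ref{thm:Poincare_de_Rham}; for the uniform $K$-theories it is implicit in the interpretation Theorems~\ref{thm:interpretation_K0u} and~\ref{thm:interpretation_K1u} and in \cite[Section~4.4]{engel_indices_UPDO}) we may replace each $V_i$ by a point. One then reads off
\[K_u^0(U_1) \cong \ell^\infty(I, \IZ), \quad K_u^1(U_1) = 0, \quad \HudRco^0(U_1) \cong \ell^\infty(I, \IC), \quad \HudRco^{>0}(U_1) = 0,\]
and dually for the homological side; the cohomological Chern character becomes, after $\barotimes\,\IC$, the identification $\ell^\infty(I, \IZ) \barotimes \IC \cong \ell^\infty(I, \IC)$, and similarly for the homological Chern character.

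The main obstacle I expect is verifying that $\barotimes\,\IC$ genuinely preserves exactness of the Mayer--Vietoris sequences: the algebraic step is harmless, but the completion step requires checking that the connecting maps are uniformly bounded with respect to the implicit $\ell^\infty$-structures that the $K$-groups inherit from the uniform disjoint union decompositions of each $U_K$. Property~\ref{cover_ii} of Lemma~\ref{lem:suitable_coloring_cover_M} is built in precisely for this reason, and careful bookkeeping of these norm bounds throughout the induction should yield not only the isomorphisms but also their continuity, as asserted.
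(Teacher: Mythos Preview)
Your outline follows the paper's approach: a Mayer--Vietoris induction over the coloring of Lemma~\ref{lem:suitable_coloring_cover_M}, with the base case reducing to $\ell^\infty_\IZ(I)\barotimes\IC\cong\ell^\infty(I)$. Two inaccuracies are worth flagging. First, the uniform $K$-groups are \emph{not} countable in general (already $K_u^0$ of a countably infinite discrete set is $\ell^\infty_\IZ$, which is uncountable), so drop that adjective. Second, and more substantively, your last paragraph misdiagnoses the exactness issue: you start correctly by giving the $K$-groups the discrete topology, but then revert to talking about ``implicit $\ell^\infty$-structures'' and ``uniform boundedness of connecting maps''. The paper's point is that with the \emph{discrete} topology every algebraically exact sequence of $K$-groups is automatically topologically exact (every group homomorphism between discrete groups is continuous and open onto its image), and then a general lemma (Lemma~\ref{lem:functor_exact}) shows that $-\barotimes\IC$ sends topologically exact sequences to topologically exact sequences. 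No $\ell^\infty$-norm bookkeeping on the $K$-side is needed; Property~\ref{cover_ii} enters only to make the de~Rham Mayer--Vietoris maps continuous, so that the Chern-character squares extend to the completion.

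You also skip two points the paper does address: one must construct the Chern character on the intermediate subsets $K_u^\ast(O\subset M)\to\HudRco^\ast(O)$ (via the dense local $C^\ast$-subalgebra $C_b^\infty(O)\subset C_u(O,d)$), and since the de~Rham targets are not Hausdorff, the extensions from $-\otimes\IC$ to $-\barotimes\IC$ are not unique and one must check that compatible choices can be made across the induction.
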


Let us discuss why we have to take a completion at all. Consider the beginning of the Mayer--Vietoris induction where we have to show that the Chern characters induce isomorphisms on a countably infinite collection of uniformly discretely distributed points. Let these points be indexed by a set $Y$. Then the $K$-groups of $Y$ are given by $\ell^\infty_\IZ(Y)$, the group of all bounded, integer-valued sequences indexed by $Y$, and the de Rham groups are given by $\ell^\infty(Y)$, the group of all bounded, complex valued sequences on $Y$. But since $Y$ is countably infinite we have $\ell^\infty_\IZ(Y) \otimes \IC \not\cong \ell^\infty(Y)$. Instead we have $\overline{\ell^\infty_\IZ(Y) \otimes \IC} \cong \ell^\infty(Y)$.

To define the \emph{completed topological tensor product of an abelian group with $\IC$} we will need the notion of the \emph{free (abelian) topological group}: if $X$ is any completely regular\footnote{That is to say, every closed set $K$ can be separated with a continuous function from every point $x \notin K$. Note that this does not necessarily imply that $X$ is Hausdorff.} topological space, then the free topological group $F(X)$ on $X$ is a topological group such that we have
\begin{itemize}
\item a topological embedding $X \hookrightarrow F(X)$ of $X$ as a closed subset, so that $X$ generates $F(X)$ algebraically as a free group (i.e., the algebraic group underlying the free topological group on $X$ is the free group on $X$), and we have
\item the following universal property: for every continuous map $\phi\colon X \to G$, where $G$ is any topological group, we have a unique extension $\Phi\colon F(X) \to G$ of $\phi$ to a continuous group homomorphism on $F(X)$:
\[\xymatrix{X \ar@{^{(}->}[r] \ar[d]_{\phi} & F(X) \ar@{-->}[dl]^{\exists ! \Phi}\\ G}\]
\end{itemize}
The free abelian topological group $A(X)$ has the corresponding analogous properties. Furthermore, the commutator subgroup $[F(X), F(X)]$ of $F(X)$ is closed and the quotient $F(X) / [F(X), F(X)]$ is both algebraically and topologically $A(X)$.

As an easy example consider $X$ equipped with the discrete topology. Then $F(X)$ and $A(X)$ also have the discrete topology.

It seems that free (abelian) topological groups were apparently introduced by Markov in \cite{markoff_original}. But unfortunately, the author could not obtain any (neither russian nor english) copy of this article. A complete proof of the existence of such groups was given by Markov in \cite{markoff}. Since his proof was long and complicated, several other authors gave other proofs, e.g., Nakayama in \cite{nakayama}, Kakutani in \cite{kakutani} and Graev in \cite{graev}.

Now let us construct for any abelian topological group $G$ the complete topological vector space $G \barotimes \IC$. We form the topological tensor product $G \otimes \IC$ of abelian topological groups in the usual way: we start with the free abelian topological group $A(G \times \IC)$ over the topological space $G \times \IC$ equipped with the product topology\footnote{Note that every topological group is automatically completely regular and therefore the product $G \times \IC$ is also completely regular.} and then take the quotient $A(G \times \IC) / \mathcal{N}$ of it,\footnote{Since $A(X)$ is both algebraically and topologically the quotient of $F(X)$ by its commutator subgroup, we could also have started with $F(G \times \IC)$ and additionally put the commutator relations into $\mathcal{N}$.} where $\mathcal{N}$ is the closure of the normal subgroup generated by the usual relations for the tensor product.\footnote{That is to say, $\mathcal{N}$ contains $(g_1 + g_2) \times r - g_1 \times r - g_2 \times r$, $g \times (r_1 + r_2) - g \times r_1 - g \times r_2$ and $zg \times r - z(g \times r)$, $g \times zr - z(g \times r)$, where $g, g_1, g_2 \in G$, $r, r_1, r_2 \in \IC$ and $z \in \IZ$.} Now we may put on $G \otimes \IC$ the structure of a topological vector space by defining the scalar multiplication to be $\lambda (g \otimes r) := g \otimes \lambda r$.

What we now got is a topological vector space $G \otimes \IC$ together with a continuous map $G \times \IC \to G \otimes \IC$ with the following universal property: for every continuous map $\phi\colon G \times \IC \to V$ into any topological vector space $V$ and such that $\phi$ is bilinear\footnote{That is to say, $\phi(\largecdot, r)$ is a group homomorphism for all $r \in \IC$ and $\phi(g, \largecdot)$ is a linear map for all $g \in G$. Note that we then also have $\phi(zg,r) = z\phi(g,r) = \phi(g,zr)$ for all $z \in \IZ$, $g \in G$ and $r \in \IC$.\label{footnote:univ_prop_TVS}}, there exists a unique, continuous linear map $\Phi\colon G \otimes \IC \to V$ such that the following diagram commutes:
\[\xymatrix{G \times \IC \ar[r] \ar[d]_{\phi} & G \otimes \IC \ar@{-->}[dl]^{\exists ! \Phi} \\ V}\]

Since every topological vector space may be completed we do this with $G \otimes \IC$ to finally arrive at $G \barotimes \IC$. Since every continuous linear map of topological vector spaces is automatically uniformly continuous, i.e., may be extended to the completion of the topological vector space, $G \barotimes \IC$ enjoys the following universal property which we will raise to a definition:

\begin{defn}[Completed topological tensor product with $\IC$]
Let $G$ be an abelian topological group. Then $G \barotimes \IC$ is a complete topological vector space over $\IC$ together with a continuous map $G \times \IC \to G \barotimes \IC$ that enjoy the following universal property: for every continuous map $\phi\colon G \times \IC \to V$ into any complete topological vector space $V$ and such that $\phi$ is bilinear\footnote{see Footnote \ref{footnote:univ_prop_TVS}}, there exists a unique, continuous linear map $\Phi\colon G \barotimes \IC \to V$ such that
\[\xymatrix{G \times \IC \ar[r] \ar[d]_{\phi} & G \barotimes \IC \ar@{-->}[dl]^{\exists ! \Phi} \\ V}\]
is a commutative diagram.
\end{defn}

We will give now two examples for the computation of $G \barotimes \IC$. The first one is easy and just a warm-up for the second which we already mentioned. Both examples are proved by checking the universal property.

\begin{examples}\label{ex:completed_tensor_prod}
The first one is $\IZ \barotimes \IC \cong \IC$.

For the second example consider the group $\ell^\infty_\IZ$ consisting of bounded, integer-valued sequences. Then $\ell^\infty_\IZ \barotimes \IC \cong \ell^\infty$.
\end{examples}

Since we want to use the completed topological tensor product with $\IC$ in a Mayer--Vietoris argument, we have to show that it transforms exact sequences to exact sequences.

So we have to show that the functor $G \mapsto G \barotimes \IC$ is exact. But we have to be careful here: though taking the tensor product with $\IC$ is exact, passing to completions is usually not---at least if the exact sequence we started with was only algebraically exact. Let us explain this a bit more thoroughly: if we have a sequence of topological vector spaces
\[\ldots \longrightarrow V_i \stackrel{\varphi_i}\longrightarrow V_{i+1} \stackrel{\varphi_{i+1}}\longrightarrow V_{i+2} \longrightarrow \ldots\]
which is exact in the algebraic sense (i.e., $\image \varphi_i = \kernel \varphi_{i+1}$), and if the maps $\varphi_i$ are continuous such that they extend to maps on the completions $\overline{V_i}$, we do not necessarily get that
\[\ldots \longrightarrow \overline{V_i} \stackrel{\overline{\varphi_i}}\longrightarrow \overline{V_{i+1}} \stackrel{\overline{\varphi_{i+1}}}\longrightarrow \overline{V_{i+2}} \longrightarrow \ldots\]
is again algebraically exact. The problem is that though we always have $\overline{\kernel \varphi_i} = \kernel \overline{\varphi_i}$, we generally only get $\overline{\image \varphi_i} \supset \image \overline{\varphi_i}$. To correct this problem we have to start with an exact sequence which is also topologically exact, i.e., we need that not only $\image \varphi_i = \kernel \varphi_{i+1}$, but we also need that $\varphi_i$ induces a topological isomorphism $V_i / \kernel \varphi_i \cong \image \varphi_i$.

To prove that in this case we get $\overline{\image \varphi_i} = \image \overline{\varphi_i}$ we consider the inverse map
\[\psi_i := \varphi_i^{-1}\colon \image \varphi_i \to V_i / \kernel \varphi_i.\]
Since $\psi_i$ is continuous (this is the point which breaks down without the additional assumption that $\varphi_i$ induces a topological isomorphism $V_i / \kernel \varphi_i \cong \image \varphi_i$), we may extend it to a map
\[\overline{\psi_i} \colon \overline{\image \varphi_i} \to \overline{V_i / \kernel \varphi_i} = \overline{V_i} / \overline{\kernel \varphi_i},\]
which obviously is the inverse to $\overline{\varphi_i} \colon \overline{V_i} / \overline{\kernel \varphi_i} \to \overline{\image \varphi_i}$ showing the desired equality $\overline{\image \varphi_i} = \image \overline{\varphi_i}$.

Coming back to our functor $G \mapsto G \barotimes \IC$, we may now prove the following lemma:

\begin{lem}\label{lem:functor_exact}
Let
\[\ldots \longrightarrow G_i \stackrel{\varphi_i}\longrightarrow G_{i+1} \stackrel{\varphi_{i+1}}\longrightarrow G_{i+2} \longrightarrow \ldots\]
be an exact sequence of topological groups and continuous maps, which is in addition topologically exact, i.e., for all $i \in \IZ$ the from $\varphi_i$ induced map $G_i / \kernel \varphi_i \to \image \varphi_i$ is an isomorphism of topological groups.

Then
\[\ldots \longrightarrow G_i \barotimes \IC \longrightarrow G_{i+1} \barotimes \IC \longrightarrow G_{i+2} \barotimes \IC \longrightarrow \ldots\]
with the induced maps is an exact sequence of complete topological vector spaces, which is also topologically exact.
\end{lem}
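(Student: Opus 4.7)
The plan is to decompose the argument into two steps: first handle the uncompleted topological tensor product $-\otimes \IC$, and then apply the completion functor using the fact already spelled out in the paragraphs immediately preceding the lemma.

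Step one is to show that the functor $G \mapsto G \otimes \IC$ (uncompleted) transforms topologically exact sequences of topological abelian groups into topologically exact sequences of topological $\IC$-vector spaces. I would split each map $\varphi_i$ through its coimage-image factorization
\[G_i \twoheadrightarrow G_i/\kernel\varphi_i \stackrel{\sim}\longrightarrow \image\varphi_i \hookrightarrow G_{i+1},\]
where by hypothesis the middle arrow is a topological isomorphism. It then suffices to show two things: (a) that tensoring with $\IC$ takes a closed subgroup inclusion $H \hookrightarrow G$ to a topological embedding $H \otimes \IC \hookrightarrow G \otimes \IC$ onto a closed subspace; and (b) that tensoring with $\IC$ takes a topological quotient $G \twoheadrightarrow G/H$ to a topological quotient $G \otimes \IC \twoheadrightarrow (G/H) \otimes \IC$ with kernel equal to the image of $H \otimes \IC$. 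Both (a) and (b) I would prove not by digging into the explicit construction via $A(G \times \IC)/\overline{\mathcal{N}}$, but by applying the universal property of the topological tensor product: for (a), a continuous bilinear map on $H \times \IC$ extends bilinearly and continuously to $G \times \IC$ using a topological splitting, then passes through the tensor product; for (b), a continuous bilinear map on $G \times \IC$ vanishing on $H \times \IC$ factors uniquely through $(G/H) \times \IC$, and the universal property of $(G/H) \otimes \IC$ then gives the required factorization. Topological exactness of the tensored sequence at $G_{i+1} \otimes \IC$ follows by combining (a) and (b) with the factorization above.

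Step two is to pass from $G_i \otimes \IC$ to $G_i \barotimes \IC$. Since step one produced a topologically exact sequence, we are in the situation described just before the lemma: the inverse $\psi_i = \varphi_i^{-1}\colon \image\varphi_i \to (G_i \otimes \IC)/\kernel\varphi_i$ is continuous by topological exactness, hence extends continuously to the completion, giving $\overline{\image \varphi_i} = \image \overline{\varphi_i}$, while $\overline{\kernel \varphi_i} = \kernel \overline{\varphi_i}$ holds automatically. The induced maps between the completions are therefore topologically exact. The main obstacle I expect is in step one, specifically ensuring that the closure $\overline{\mathcal{N}}$ used in the construction of $G \otimes \IC$ does not introduce additional algebraic relations which would enlarge $\kernel(G \otimes \IC \to (G/H) \otimes \IC)$ beyond $\overline{H \otimes_\IZ \IC}$, or shrink $H \otimes \IC$ inside $G \otimes \IC$; invoking the universal property instead of chasing elements through $A(G \times \IC)/\overline{\mathcal{N}}$ is what lets us sidestep this issue cleanly.
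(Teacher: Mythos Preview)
Your two-step outline (first apply $-\otimes\IC$ uncompleted, then complete using the discussion preceding the lemma) matches the paper's structure, and your Step~2 is essentially identical to the paper's. The difficulty is in Step~1(a).

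To show that $H\otimes\IC \to G\otimes\IC$ is a topological embedding you propose to extend a continuous bilinear map on $H\times\IC$ to $G\times\IC$ ``using a topological splitting''. But a closed subgroup inclusion $H\hookrightarrow G$ of topological abelian groups need not admit a continuous (or even algebraic) splitting; for instance $2\IZ\hookrightarrow\IZ$ with the discrete topology does not split, and discrete abelian groups are precisely the kind arising in the intended application to $K$-groups. So as written, (a) has a genuine gap, and the decomposition of Step~1 into (a) and (b) is doing more work than is actually needed.

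The paper sidesteps this entirely. For algebraic exactness after tensoring, it simply invokes that $\IC$ is flat over~$\IZ$, so $-\otimes_\IZ\IC$ is exact; in particular $\kernel(\varphi_i\otimes\id)=(\kernel\varphi_i)\otimes\IC$ and $\image(\varphi_i\otimes\id)=(\image\varphi_i)\otimes\IC$. For topological exactness it does not attempt to prove your (a) at all: instead it observes that the hypothesis hands you a \emph{continuous} inverse $\psi_i\colon \image\varphi_i \to G_i/\kernel\varphi_i$, and functoriality of the topological tensor product lets you tensor $\psi_i$ with $\id_\IC$ to obtain a continuous inverse $(\image\varphi_i)\otimes\IC \to (G_i/\kernel\varphi_i)\otimes\IC$ of the map induced by $\varphi_i\otimes\id$. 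This gives topological exactness directly, without ever needing a splitting of $H\hookrightarrow G$ or a separate embedding statement. You can repair your argument by replacing (a) with exactly this functoriality observation.
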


\begin{proof}
We first tensor with $\IC$ (without the completion afterwards). This is known to be an exact functor and our sequence also stays topologically exact. To see this last claim, we need the following fact about tensor products: if $\varphi\colon M \to M^\prime$ and $\psi\colon N \to N^\prime$ are surjective, then the kernel of $\varphi \otimes \psi \colon M \otimes M^\prime \to N \otimes N^\prime$ is the submodule given by
\[\kernel (\varphi \otimes \psi) = (\iota_M \otimes 1) \big( (\kernel \varphi) \otimes N \big) + (1 \otimes \iota_N) \big( M \otimes (\kernel \psi) \big),\]
where $\iota_M \colon \kernel \varphi \to M$ and $\iota_N \colon \kernel \psi \to N$ are the inclusion maps. We will suppress the inclusion maps from now on to shorten the notation.

We apply this with the map $\varphi \colon M \to M^\prime$ being the quotient map $G_i \to G_i / \kernel \varphi_i$ and $\psi \colon N \to N^\prime$ being the identity $\id \colon \IC \to \IC$ to get
\[\kernel (\varphi_i \otimes \id) = (\kernel \varphi_i) \otimes \IC.\]
Since we have $(\image \varphi_i) \otimes \IC = \image (\varphi_i \otimes \id)$, we get that $\varphi \otimes \id \colon G_i \otimes \IC \to G_i \otimes \IC$ induces an algebraic isomorphism $(G_i / \kernel \varphi_i) \otimes \IC \to \image \varphi_i \otimes \IC$. But this has now an inverse map given by tensoring the inverse of $G_i / \kernel \varphi_i \to \image \varphi_i$ with $\id \colon \IC \to \IC$. So the isomorphism $(G_i / \kernel \varphi_i) \otimes \IC \cong \image \varphi_i \otimes \IC$ is also topological.

Now we apply the discussion before the lemma to show that the completion of this new sequence is still exact and also topologically exact.
\end{proof}

To show $K_u^\ast(M) \barotimes \IC \cong \HudRco^\ast(M)$ it remains to construct Mayer--Vietoris sequences 
with continuous maps in them (we need this since in constructing the completed tensor product with $\IC$ we have to pass to the completion and without continuity of the maps in both the Mayer--Vietoris sequences for uniform $K$-theory and for uniform de Rham cohomology we would not be able to conclude that the squares are still commutative). If we recall from the proof of Proposition~\ref{prop:uniform_bounded_dRco} how we get the boundary maps in the Mayer--Vietoris sequence for uniform de Rham cohomology, we see that we must construct a continuous split to the last non-trivial map in the sequence \eqref{eq_MV_de_Rham}.\footnote{The referenced sequence is for bounded de Rham cohomology. In this proof here we, of course, have to use the analogous sequence for uniform de Rham cohomology.} But we proved surjectivity of this map in the usual way by using partitions of unity (with uniformly bounded derivatives). Hence we have already constructed the continuous split.

In the proof of \Poincare duality between uniform $K$-theory and uniform $K$-homology in \cite[Section~4.4]{engel_indices_UPDO} we used groups denoted by $K^\ast_u(O \subset M)$ for the Mayer--Vietoris sequence for uniform $K$-theory. These groups are defined as $K_{-\ast}(C_u(O,d))$, where $(O,d)$ is the metric space $O$ equipped with the subspace metric derived from the metric space~$M$. For the construction of the Chern character $K^\ast_u(O \subset M) \to \HudRco^\ast(O)$ we have to pass to a smooth subalgebra of $C_u(O,d)$. This will be of course $C_b^\infty(O) \subset C_u(O,d)$, which is a local $C^\ast$-algebra.\footnote{That is to say, its operator $K$-theory coincides with the operator $K$-theory of its $C^\ast$-algebra completion $C_u(O,d)$.} We have to argue now why it is a dense subalgebra: so let $f \in C_u(O,d)$ be given. Then we know from \cite[Lemma~4.36]{engel_indices_UPDO} that there is a bounded, uniformly continuous extension $F$ of $f$ to $M$. Now we use \cite[Lemma~4.7]{engel_indices_UPDO} to approximate $F$ by functions from $C_b^\infty(M)$, which will give us by restriction to $O$ an approximation of $f$ by functions from $C_b^\infty(O)$. Therefore we get an interpretation of $K^\ast_u(O \subset M)$ by vector bundles of bounded geometry over $O$ (cf.~Section~\ref{sec_uniform_K}) and may define by Chern--Weil theory (as in Theorem~\ref{thm_Chern_character}) the Chern character $K^\ast_u(O \subset M) \to \HudRco^\ast(O)$.

The last thing that we have to discuss is the small ambiguity in extending the maps $K^\ast_u(O \subset M) \otimes \IC \to \HudRco^\ast(O)$ to $K^\ast_u(O \subset M) \barotimes \IC$. It occurs because the target $\HudRco^\ast(O)$ is not necessarily Hausdorff. What we have to make sure is that the extensions we choose in the Mayer--Vietoris argument for the subsets $U_k$, respectively $U_K$, do match up, i.e., produce at the end commuting squares in the comparison of the two Mayer--Vietoris sequences via the Chern characters.

So we have finally discussed everything that we need in order to prove
\[K_u^\ast(M) \barotimes \IC \cong \HudRco^\ast(M).\]

Proving the homological version $K^u_\ast(M) \barotimes \IC \cong \HudR_\ast(M)$ is also such a Mayer--Vietoris argument. But for \spinc manifolds there is an easier argument by combining the cohomological result $K_u^\ast(M) \barotimes \IC \cong \HudRco^\ast(M)$ with Theorem \ref{thm:local_thm_twisted} since taking the wedge product with $\ind(D)$ is an isomorphism on bounded de Rham cohomology, and furthermore using \Poincare duality between uniform $K$-theory and uniform $K$-homology (Theorem \ref{thm:Poincare_duality_K}), respectively between bounded de~Rham cohomology and uniform de~Rham homology (Theorem \ref{thm:Poincare_de_Rham}).

\section{Index theorems}\label{sec_index_theorems}

In this section we assemble everything that we had up to now into various index theorems. In Section~\ref{sec:local_index_thm} we first recall the construction of the topological index classes of elliptic operators and then prove local index theorems. In Section~\ref{sec_amenable_index} we prove a global index theorem, which will be a generalization of an index theorem of Roe \cite{roe_index_1}. He proved it for Dirac operators and we will generalize it to elliptic pseudodifferential operators.

\subsection{Local index formulas}
\label{sec:local_index_thm}

Let $M$ be a Riemannian manifold without boundary. We denote by $DM$ the disk bundle $\{\xi \in T^\ast M\colon \|\xi\|\le 1\}$ of its cotangent bundle and by $SM = \partial DM$ its boundary, i.e., $SM = \{\xi \in T^\ast M \colon \|\xi\| = 1\}$. If $M$ has bounded geometry, we may equip $DM$ with a Riemannian metric such that it also becomes of bounded geometry\footnote{Though we do not have defined bounded geometry for manifolds with boundary, there is an obvious one (demanding bounds not only for the curvature tensor of $M$ but also for the second fundamental form of the boundary of $M$, and demanding the injectivity radius being uniformly positive not only for $M$ but also for $\partial M$ with the induced metric). See \cite{schick_bounded_geometry_boundary} for a further discussion.} and $DM \to M$ becomes a Riemannian submersion. It follows that $SM$ will also have bounded geometry. What follows will be independent of the concrete choice of metric on $DM$. Though we have discussed in Section \ref{sec_uniform_K} only uniform $K$-theory for manifolds without boundary, one can of course define more generally relative uniform $K$-theory and discuss it for manifolds with boundary and of bounded geometry.

Let $P \in \UPsiDO^k(E)$ be a symmetric, elliptic and graded uniform pseudodifferential operator. Recall from Definition \ref{defnnkdf893} of ellipticity that the principal symbol $\sigma(P^+)$, viewed as a section of $\Hom(\pi^\ast E^+, \pi^\ast E^-) \to T^\ast M$, where $\pi\colon T^\ast M \to M$ is the cotangent bundle, is invertible outside a uniform neighbourhood of the zero section $M \subset T^\ast M$ and satisfies a certain uniformity condition. Then the well-known clutching construction gives us the following symbol class of $P$:
\[ \sigma_P := [\pi^\ast E^+, \pi^\ast E^-; \sigma(P) ] \in K_u^0(DM, SM).\]

If $P$ is ungraded, then its symbol $\sigma(P)\colon \pi^\ast E \to \pi^\ast E$, where $\pi\colon SM \to M$ denotes now the unit sphere bundle of $M$, is a uniform, self-adjoint automorphism. Hence it gives a direct sum decomposition $\pi^\ast E = E^+ \oplus E^-$, where $E^+$ and $E^-$ are spanned fiberwise by the eigenvectors belonging to the positive, respectively negative, eigenvalues of $\sigma(P)$, and we get an element
\[[E^+] \in K_u^0(SM).\]
Now we define in the ungraded case the symbol class of $P$ as
\[\sigma_P := \delta[E^+] \in K_u^1(DM,SM),\]
where $\delta\colon K_u^0(SM) \to K_u^1(DM,SM)$ is the boundary homomorphism of the $6$-term exact sequence associated to $(DM,SM)$. References for this construction in the compact case are, e.g., \cite[Section 24]{baum_douglas} and \cite[Proposition 3.1]{atiyah_patodi_singer_3}.

Applying the Chern character and integrating over the fibers we get in both the graded and ungraded case $\pi_! \ch \sigma_P \in \HbdR^\ast(M)$ and then the index class of $P$ is defined as
\[\ind(P) := (-1)^{\frac{n(n+1)}{2}} \pi_! \ch \sigma_P \wedge \operatorname{Td}(M) \in \HbdR^\ast(M),\]
where $n = \dim M$.

Let $M$ be a \spinc manifold of bounded geometry and let us denote by $D$ the Dirac operator associated to the \spinc structure of $M$. Note that it is $m$-multigraded, where $m$ is the dimension of the manifold $M$, and so defines an element in $K_m^u(M)$. Hence cap product with $D$ is a map $K_u^\ast(M) \to K^u_{m-\ast}(M)$, which is an isomorphism (Theorem~\ref{thm:Poincare_duality_K}). We have also the \Poincare duality map $\HbdR^\ast(M) \to \HudR_{m-\ast}(M)$, and the content of our local index theorem for uniform twisted Dirac operators is to put these duality maps into a commutative diagram using the homological Chern character on the right hand side and on the cohomology side the index class of the twisted operator.

\begin{thm}[Local index theorem for twisted uniform Dirac operators]\label{thm:local_thm_twisted}
Let $M$ be an $m$-dimensional \spinc manifold of bounded geometry and without boundary. Denote the associated Dirac operator by $D$.

Then we have the following commutative diagram:
\[\xymatrix{
K^\ast_u(M) \ar[r]^-{- \cap [D]}_-{\cong} \ar[d]_-{\ch(-) \wedge \ind(D)} & K_{m-\ast}^u(M) \ar[d]^-{\alpha_\ast \circ \ch^\ast}\\
\HbdR^\ast(M) \ar[r]_-{\cong} & \HudR_{m-\ast}(M)}\]
where in the top row $\ast$ is either $0$ or $1$ and in the bottom row $\ast$ is either $\ev$ or $\odd$.
\end{thm}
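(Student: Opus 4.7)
The plan is to verify commutativity of the diagram on a generating set of $K^\ast_u(M)$ and then reduce to a local Connes--Moscovici type computation for twisted Dirac operators. First, by Theorems~\ref{thm:interpretation_K0u} and~\ref{thm:interpretation_K1u} every class in $K^0_u(M)$, respectively $K^1_u(M)$, can be written as a difference $[E]-[F]$ of classes of vector bundles of bounded geometry. Using additivity of the four maps appearing in the diagram, it suffices to check the equality on a single such bundle class $[E]$. The cap-product identity~\eqref{eq:cap_twisted_Dirac} gives $[E] \cap [D] = [D_E]$, so the top-right composition sends $[E]$ to $(\alpha_\ast \circ \ch^\ast)([D_E])$. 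The task is therefore to show that this uniform de~Rham current equals the \Poincare dual of $\ch(E) \wedge \ind(D)$, i.e.\ the current $\omega \mapsto \int_M \omega \wedge \ch(E) \wedge \ind(D)$.

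Second, I would represent $\ch^\ast([D_E]) \in \HPucont^\ast(\Winftyone(M))$ by an explicit cyclic cocycle, namely a JLO-type cocycle built from the heat semigroup $e^{-tD_E^2}$. By Corollary~\ref{cor:schwartz_function_of_PDO_quasilocal_smoothing} the operator $e^{-tD_E^2}$ is a quasi-local smoothing operator; together with the bounded geometry of $M$ and $E$ this yields the uniform trace-class estimates required to define the cocycle with constants independent of the basepoint. Applying the antisymmetrization $\alpha$ then converts this cyclic cocycle into a uniform de~Rham current, whose $t \to 0^+$ limit I would compute via a uniform Getzler rescaling argument: the leading heat-kernel asymptotics contract to the Chern--Weil representative $\ch(E) \wedge \operatorname{Td}(M)$, which by the definition of the index class given in Section~\ref{sec:local_index_thm} is precisely $\ch(E) \wedge \ind(D)$ up to signs. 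Higher-order terms in the expansion are annihilated after passage through the periodicity operator $S$ into periodic cyclic cohomology.

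For the odd case $\ast=1$ I would reduce to the even case. Theorem~\ref{thm:interpretation_K1u} presents classes in $K^1_u(M)$ by vector bundles over $S^1 \times M$ that are trivial near a basepoint, and the standard suspension trick then converts the odd index statement on $M$ into an even index statement on $S^1 \times M$, where the previous paragraph applies. Compatibility of the cap product, the Chern characters, and the suspension isomorphism on both the $K$-theoretic and the de~Rham sides then transports the identity back to $M$.

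The main obstacle is the uniform Connes--Moscovici local computation itself. In the compact setting it rests on asymptotic heat-kernel expansions, Getzler-type supertrace identities, and $S$-collapse of the lower-order cyclic cocycles; in our setting each of these ingredients must hold with constants that are uniform in $x \in M$. This uniformity is precisely what justifies the target living in $\HudR_{m-\ast}(M)$ rather than in an ordinary de~Rham homology that ignores behaviour at infinity, and it is the step that uses the full strength of the uniform pseudodifferential calculus from Section~\ref{sec_UPDOs}. A secondary delicacy is that Section~\ref{sec_cyclic_cocycles} produces only a dashed arrow $\ch^\ast\colon K^u_\ast(M) \dashrightarrow \HPucont^\ast(\Winftyone(M))$; to avoid a circular dependency on Corollary~\ref{cor:ch_well_defined} (which is supposed to follow from the present theorem) I would work throughout with the canonical JLO representative attached to $D_E$ and verify directly that its cyclic cohomology class is invariant under the operator homotopies relevant for identifying different uniformly summable models of $[D_E]$.
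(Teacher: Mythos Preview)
Your proposal is correct and follows essentially the same route as the paper: reduce to the twisted Dirac operator $D_E$ via the cap-product identity~\eqref{eq:cap_twisted_Dirac}, represent its homological Chern character by a heat-kernel (JLO-type) cocycle, and invoke the Connes--Moscovici local computation with the observation that bounded geometry supplies the required uniform estimates. The paper's proof is very terse---it simply cites \cite[Section~3]{connes_moscovici} and notes that the heat-kernel computations go through uniformly---while you spell out the ingredients (Getzler rescaling, suspension for the odd case, and the care needed to avoid circularity with Corollary~\ref{cor:ch_well_defined}); but the underlying argument is the same.
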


\begin{proof}
This follows from the calculations carried out by Connes and Moscovici in their paper \cite[Section 3]{connes_moscovici} by noting that the computations also apply in our case where we have bounded geometry and the uniformity conditions. Note that there the cyclic cocycles are defined using expressions in the operators $e^{-tD^2}$. To translate to the definition of the homological Chern character that we use, see, e.g., \cite[Section 10.2]{GBVF}.
\end{proof}

\begin{rem}\label{rem:ch_well_defined_spinc}
The uniform homological Chern character $\alpha_\ast \circ \ch^\ast\colon K_\ast^u(M) \dashrightarrow \HudR_\ast(M)$ is a priori not well-defined (to be more precise, it is defined on uniformly finitely summable Fredholm modules and it is a priori not clear whether it descends to classes and even whether every class may be represented by a uniformly finitely summable module). But using \Poincare duality between uniform $K$-homology and uniform $K$-theory and the above local index theorem, we see that it is a posteriori well-defined for \spinc manifolds. Note that since $D$ is a Dirac operator, it defines a uniformly finitely summable Fredholm module, and therefore also all its twists given by taking the cap product with uniform $K$-theory classes are uniformly finitely summable.

That the uniform homological Chern character is well-defined for every manifold $M$ of bounded geometry is content of Corollary \ref{cor:ch_well_defined}.
\end{rem}

Let $P$ be a symmetric and elliptic uniform pseudodifferential operator over an oriented manifold $M$ of bounded geometry. It defines a uniform $K$-homology class $[P] \in K_\ast^u(M)$ and therefore, if $P$ is in addition uniformly finitely summable, we may compare the class $(\alpha_\ast \circ \ch^\ast)(P) \in \HudR_\ast(M)$ with $\ind(P) \in \HbdR^\ast(M)$ using \Poincare duality. That they are equal is the content of the next theorem.

\begin{thm}[Local index formula for uniform pseudodifferential operators]\label{thm:local_thm_pseudos}
Let $M$ be an oriented Riemannian manifold of bounded geometry and without boundary.

Let $P$ be a symmetric and elliptic uniform pseudodifferential operator of positive order acting on a vector bundle $E \to M$ of bounded geometry, and let $P$ be uniformly finitely summable\footnote{This means that $P$ defines a uniformly finitely summable Fredholm module, i.e., $\chi(P)$ is uniformly finitely summable for some normalizing function $\chi$.}.

Then $\ind(P) \in \HbdR^\ast(M)$ is the \Poincare dual of $(\alpha_\ast \circ \ch^\ast)(P) \in \HudR_\ast(M)$.
\end{thm}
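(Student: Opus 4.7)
The plan is to reduce the index problem for $P$ to an index problem for a twisted Dirac operator, so that Theorem~\ref{thm:local_thm_twisted} can be invoked. Both sides of the claimed equality depend only on the principal symbol class of $P$: the right-hand side by Example~\ref{ex_K_hom_classes} (which says $[P]\in K_\ast^u(M)$ depends only on $\sigma(P)$), and the left-hand side by the definition of $\ind(P)$ as a characteristic expression in $\sigma(P)$. Hence it suffices to establish the identity at the level of symbol classes $\sigma_P\in K_u^\ast(DM,SM)$.

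First I would handle the \spinc case. In that setting the spin$^c$ Dirac operator $D$ on $M$ has a symbol $\sigma_D$ that is, by a uniform version of the classical Thom isomorphism theorem, a generator of $K_u^\ast(DM,SM)$ as a module over $K_u^\ast(M)$. So I would prove a uniform Thom isomorphism $K_u^\ast(M)\xrightarrow{\cong} K_u^{\ast+m}(DM,SM)$, $[V]\mapsto [V]\cdot\sigma_D$, compatible with the Chern character and the topological Thom isomorphism in bounded de~Rham cohomology. This then lets me write $\sigma_P=[V]\cdot\sigma_D$ in $K_u^\ast(DM,SM)$ for a unique uniform $K$-theory class $[V]\in K_u^\ast(M)$, which at the level of operators means $[P]=[V]\cap[D]=[D_V]$ in $K_\ast^u(M)$ (using equation~\eqref{eq:cap_twisted_Dirac} in Proposition~\ref{prop:properties_general_cap_product}). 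Applying Theorem~\ref{thm:local_thm_twisted} to $[V]\in K_u^\ast(M)$ yields $\alpha_\ast\ch^\ast[D_V]=\mathrm{PD}\bigl(\ch(V)\wedge\ind(D)\bigr)$, and the multiplicativity of Chern character, Todd class and Thom isomorphism identifies $\ch(V)\wedge\ind(D)$ with $\ind(P)$; this is the same algebraic manipulation carried out in Section~3 of Connes--Moscovici~\cite{connes_moscovici}, which goes through verbatim once the uniform replacements for all the ingredients are in place.

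To cover the merely oriented case, I would use a product trick: cross $M$ with a suitable compact \spinc manifold $N$ (for instance $S^2$ or a high-dimensional sphere chosen so that $M\times N$ is \spinc) and pull back $P$ to an operator on $M\times N$ with the same symbolic structure in the $M$-direction. Bounded geometry of $M\times N$, $C_b^\infty$-boundedness of the construction, and the compatibility of both sides of the identity with external products (equations~\eqref{eq:general_cap_compatibility_module} and~\eqref{eq:compatibility_cap_external}) let me pull the already-established \spinc statement back to $M$, using the fact that pairing with the fundamental class of $N$ is injective on the relevant cohomology and homology groups.

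The main obstacle will be the uniform Thom isomorphism in Step~2: one must show that the multiplication-by-$\sigma_D$ map is an isomorphism of uniform $K$-theory groups in a way controlled by the bounded geometry of $M$ and of $TM$. My plan is to build it by the same Mayer--Vietoris induction used in~\cite[Section~4.4]{engel_indices_UPDO} and in the proof of Theorem~\ref{thm:Poincare_de_Rham}, covering $M$ by the uniform, finitely-coloured cover produced by Lemma~\ref{lem:suitable_coloring_cover_M}; on each colour piece the Thom map is the standard one for Euclidean space, and the colouring lets one glue finitely many times. Checking that the glueing is natural enough to make the comparison with the de~Rham Thom isomorphism commute is the computational heart of the argument, but once that square is established the reduction to Theorem~\ref{thm:local_thm_twisted} is formal.
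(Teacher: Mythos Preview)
Your reduction in the \spinc case---writing $\sigma_P = [V]\cdot\sigma_D$ via a uniform Thom isomorphism and then invoking Theorem~\ref{thm:local_thm_twisted}---is correct and is exactly the mechanism the paper uses. The paper packages the uniform Thom isomorphism as Theorem~\ref{thm:thom}, proved by the same Mayer--Vietoris scheme you outline, so that part of your plan is on target.

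The genuine problem is your passage from oriented to \spinc by a product trick. You propose to choose a compact \spinc manifold $N$ so that $M\times N$ becomes \spinc. This cannot be done in general. For $M$ oriented one has $w_2(M\times N)=w_2(M)\times 1 + 1\times w_2(N)$, and applying the Bockstein gives $W_3(M\times N)=W_3(M)\times 1 + 1\times W_3(N)$ in $H^3(M\times N;\IZ)$. The two summands lie in different K\"unneth pieces and cannot cancel, so $M\times N$ is \spinc only if $M$ already is. In particular, crossing with $S^2$ or any sphere does nothing for you when $W_3(M)\neq 0$, and there is no choice of $N$ that repairs this. Your Step~3 therefore does not reduce the oriented case to the \spinc case.

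The paper (following Connes--Moscovici) bypasses this obstruction by using the \emph{signature operator} in place of the \spinc Dirac operator. On any oriented, even-dimensional manifold of bounded geometry the principal symbol of the signature operator furnishes an orientation class for the Thom isomorphism after inverting~$2$ (second half of Theorem~\ref{thm:thom}); since the target is de~Rham (co)homology with complex coefficients, $2$-torsion is irrelevant. Thus every symbol class is, rationally, a twist of the signature symbol, and the Connes--Moscovici heat-kernel computation for generalized Dirac operators handles twisted signature operators just as well as \spinc Dirac operators. The odd-dimensional case is reduced to the even one by crossing with $S^1$ (not to gain a \spinc structure, only to change parity). If you replace your product-with-$N$ step by this signature-operator argument, the rest of your outline goes through.
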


\begin{proof}
This follows from the above Theorem \ref{thm:local_thm_twisted} by the same arguments as in the proof of \cite[Theorem 3.9]{connes_moscovici}: if $M$ is odd-dimensional we take the product with $S^1$, and then we use the fact that for oriented, even-dimensional manifolds uniform $K$-homology is spanned modulo $2$-torsion by generalized signature operators. This last fact will follow from Theorem \ref{thm:thom} below.
\end{proof}

\begin{cor}\label{cor:ch_well_defined}
The uniform homological Chern character
\[\alpha_\ast \circ \ch^\ast \colon K_\ast^u(M) \to \HudR_\ast(M)\]
is well-defined for every manifold $M$ of bounded geometry and without boundary.
\end{cor}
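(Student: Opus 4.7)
The plan is to treat the two \emph{a priori} ambiguities separately: (i) existence of a uniformly finitely summable Fredholm module representing a given class $[x] \in K_\ast^u(M)$, and (ii) independence of $(\alpha_\ast \circ \ch^\ast)[x] \in \HudR_\ast(M)$ of the choice of such a module. Theorem \ref{thm:local_thm_pseudos} already packages both questions when $M$ is oriented: it identifies the image with the \Poincare dual of $\ind(P) \in \HbdR^\ast(M)$, a topological class depending only on the principal symbol and, by Example \ref{ex_K_hom_classes}, only on the underlying K-homology class. The remaining work is therefore to obtain existence in the oriented case and then to remove the orientation hypothesis.

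First I would dispose of the oriented case. For a symmetric, elliptic uniform pseudodifferential operator of positive order, Corollary \ref{cor:schwartz_function_of_PDO_quasilocal_smoothing} delivers quasi-local smoothing functional calculus and hence uniform finite summability after the usual Schatten bookkeeping; so every symbol class lying in the image of the representation map of Example \ref{ex_K_hom_classes} has a uniformly finitely summable representative. The Thom-type result Theorem \ref{thm:thom} (referenced in the proof of Theorem \ref{thm:local_thm_pseudos}) asserts that for oriented, even-dimensional manifolds of bounded geometry this representation is surjective modulo $2$-torsion, generalized signature operators providing explicit generators; the odd-dimensional oriented case is reduced to the even-dimensional one by crossing with $S^1$ and invoking a suspension isomorphism. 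Since $\HudR_\ast(M)$ is a complex vector space the $2$-torsion ambiguity is invisible, and independence then follows directly from Theorem \ref{thm:local_thm_pseudos}.

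To remove the orientation hypothesis I would pass to the oriented double cover $\pi\colon \tilde M \to M$, which remains of bounded geometry. Pullback along $\pi$ is compatible both with the construction of $K_\ast^u$ and $\HudR_\ast$ and, by naturality of Chern--Weil theory, with $\alpha_\ast \circ \ch^\ast$; the associated transfer $\pi_\ast$ satisfies $\pi_\ast \circ \pi^\ast = 2 \cdot \id$ on both theories, so $\pi^\ast$ is rationally injective, which is enough because the target consists of $\IC$-vector spaces. Given any $[x] \in K_\ast^u(M)$ and two uniformly finitely summable representatives $P_1, P_2$, their pullbacks $\pi^\ast P_i$ are uniformly finitely summable representatives of $\pi^\ast[x]$ on the oriented manifold $\tilde M$; the oriented case yields equality of $(\alpha_\ast \circ \ch^\ast)(\pi^\ast P_i)$, and injectivity of $\pi^\ast$ then forces the same equality on $M$. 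Existence of a representative downstairs is obtained analogously, by exhibiting one on $\tilde M$ and averaging over the deck transformation.

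The step I expect to be most delicate is this last reduction: one must check that pullback of uniform Fredholm modules along the double cover preserves uniform finite summability and commutes with $\alpha_\ast \circ \ch^\ast$, and that the transfer is genuinely well-defined in the uniform setting. If that naturality turns out to be awkward, a safer backup is to argue entirely at the level of symbol classes in $K_u^\ast(DM, SM)$: the topological index $\ind(P)$ depends only on the symbol class, and the oriented result on $\tilde M$ combined with the rational injectivity of $\pi^\ast$ still forces agreement of the two Chern character values in $\HudR_\ast(M)$.
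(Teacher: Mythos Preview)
Your strategy---handle the oriented case via Theorem~\ref{thm:local_thm_pseudos}, signature operators and the $S^1$ suspension, then reduce the non-oriented case to the oriented one by passing to the double cover and using that the target is a $\IC$-vector space---is exactly the paper's. The paper additionally singles out the spin$^c$ case first (where uniform $K$-theoretic \Poincare duality gives representability by twisted Dirac operators directly), but this is not an essential difference; your more detailed transfer argument for the double cover is likewise just an elaboration of the paper's one-line remark that ``multiplication by some non-zero number is an isomorphism''.

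There is, however, one genuine slip. In your first paragraph you assert that $\ind(P)$ depends only on the $K$-homology class of $P$, citing Example~\ref{ex_K_hom_classes}. That example runs in the opposite direction: it says the uniform $K$-homology class of $P$ depends only on the principal symbol, not that the symbol class in $K_u^\ast(DM,SM)$ (and hence $\ind(P)$) is determined by the $K$-homology class. Two elliptic operators with different symbol classes could \emph{a priori} define the same $K$-homology class, and Example~\ref{ex_K_hom_classes} says nothing to exclude this. What you actually need---and what you gesture at in your second paragraph without making the link explicit---is that the symbol-to-$K$-homology map is rationally \emph{injective} in the oriented case. This is exactly what Theorem~\ref{thm:thom} supplies: the signature symbol is a Thom class after inverting $2$, so the passage $K_u^\ast(DM,SM)[\tfrac12] \to K_\ast^u(M)[\tfrac12]$ is an isomorphism and the symbol class, hence $\ind(P)$, is recovered from $[P]$ modulo $2$-torsion. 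Once you replace the appeal to Example~\ref{ex_K_hom_classes} by this argument, your proof and the paper's coincide.
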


\begin{proof}
If $M$ is \spinc we know by \Poincare duality that every class $[x] \in K_\ast^u(M)$ may be represented by a uniformly finitely summable Fredholm module and by the above Theorem \ref{thm:local_thm_pseudos} we conclude that $(\alpha_\ast \circ \ch^\ast)([x])$ is independent of the concrete choice of such a representative. (This was already mentioned in Remark \ref{rem:ch_well_defined_spinc}.)

In the general case we first pass to the orientation cover $X$ if $M$ is not orientable. Note that if we know the statement that we want to prove for a finite covering of $M$, then we know it also for $M$ itself since $\HudR_\ast(M)$ is a vector space over $\IC$ (i.e., multiplication by some non-zero number is an isomorphism). Now we can go on as in the proof of Theorem \ref{thm:local_thm_pseudos}: we take the product with $S^1$ if necessary and then use the fact that on oriented, even-dimensional manifolds we can represent every uniform $K$-homology class by a multiple (concretely, $2^{\dim(M)/2}$) of a generalized signature operator. For the latter statement see Theorem \ref{thm:thom}, respectively its proof.
\end{proof}

\begin{rem}\label{rem_finitely_summable_not_needed}
The condition in the above Theorem \ref{thm:local_thm_pseudos} that the operator $P$ is uniformly finitely summable may be dropped. The statement then is that $(\alpha_\ast \circ \ch^\ast)([P])$ is the dual of the class $\ind(P) \in \HbdR^\ast(M)$. This makes sense since we now know that the uniform homological Chern character $K_\ast^u(M) \to \HudR_\ast(M)$ is well-defined.

But the problem then is that in order to compute $(\alpha_\ast \circ \ch^\ast)([P])$ we would have to replace $P$ by some other operator $P^\prime$ which defines the same uniform $K$-homology class as $P$ but which is uniformly finitely summable (so that we may compute the Chern--Connes character). This seems to be a task which is not easily carried out in practice.

Connes and Moscovici work in \cite{connes_moscovici} with so-called \emph{$\theta$-summable Fredholm modules} which are more general than finitely summable modules. So defining an appropriate version of \emph{uniformly $\theta$-summable Fredholm modules} we could certainly prove the above Theorem \ref{thm:local_thm_pseudos} for them and therefore weakening the condition on $P$ that it has to be uniformly finitely summable.
\end{rem}

Let us state now the Thom isomorphism theorem in the form that we need for the proof of the above Theorem \ref{thm:local_thm_pseudos}.

\begin{thm}[Thom isomorphism]\label{thm:thom}
Let $M$ be a Riemannian \spinc manifold of bounded geometry and without boundary.

Then the principal symbol of the Dirac operator associated to the \spinc structure of $M$ constitutes an orientation class in $K_u^\ast(DM, SM)$, i.e., it implements the isomorphism $K_u^\ast(M) \cong K_u^\ast(DM, SM)$.

If $M$ is only oriented (i.e., not necessarily spin$^c$) and even-dimensional, the principal symbol of the signature operator of $M$ constitutes an orientation class in $K_u^\ast(DM, SM)[\tfrac{1}{2}]$.
\end{thm}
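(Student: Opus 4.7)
My strategy would be a Mayer--Vietoris induction along the cover of $M$ provided by Lemma~\ref{lem:suitable_coloring_cover_M}, following the same pattern used for Poincar\'e duality in \cite[Section~4.4]{engel_indices_UPDO} and for Theorem~\ref{thm:Poincare_de_Rham} in this paper. For an open subset $O \subset M$, let $DO := \pi^{-1}(O) \cap DM$ and $SO := \pi^{-1}(O) \cap SM$; restriction of $\sigma_D$ yields a class in $K_u^\ast(DO, SO \subset DM, SM)$, and the operation "cup product with $\sigma_D$'' thereby defines a natural transformation $- \cup \sigma_D \colon K_u^\ast(O \subset M) \to K_u^\ast(DO, SO \subset DM, SM)$. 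The first task is to verify that this transformation intertwines the Mayer--Vietoris sequences in the two theories: the sequence for $(DM, SM)$ is obtained by restricting the cover of $M$ to the corresponding disk--sphere bundle pairs, and naturality of cup product under restriction and excision yields a commutative ladder that lets us apply the five lemma in the inductive step.

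For the base case of the induction, the relevant piece of $M$ is a countably infinite collection $\bigsqcup_i B_i$ of uniformly discretely distributed balls, each $B_i$ related to a fixed ball $B \subset \IR^m$ by a diffeomorphism (normal coordinates) whose derivatives are bounded uniformly in $i$ thanks to bounded geometry. Both $K_u^\ast(\bigsqcup_i B_i)$ and $K_u^\ast(\bigsqcup_i DB_i, \bigsqcup_i SB_i)$ then decompose as bounded sequences valued in the classical compact $K$-groups of $B$ and $(DB, SB)$, for which the usual Thom isomorphism applies. The uniform bounds on the transition to normal coordinates guarantee that the fiberwise application of the classical Thom isomorphism lifts to an isomorphism in the uniform setting.

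The inductive step then combines these two ingredients: assuming the theorem for $U_K, U_{k+1}$ and $U_K \cap U_{k+1}$ in the notation of Lemma~\ref{lem:suitable_coloring_cover_M}, the five lemma applied to the Mayer--Vietoris ladder produces the isomorphism for $U_K \cup U_{k+1} = U_{K+1}$; after finitely many steps this exhausts $M$. For the signature operator statement in the oriented (but not necessarily spin$^c$) even-dimensional case, the same induction works after inverting~$2$, because locally (on a spin chart) the principal symbol of the signature operator differs from that of the local Dirac operator by a factor corresponding to multiplication by $2^{m/2}$ in $K$-theory, which is invertible in $K_u^\ast(DM, SM)[\tfrac{1}{2}]$; this also handles the discrepancy between orientation and spin$^c$ structure globally since the obstruction is $2$-torsion.

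The main obstacle I anticipate is the naturality check in the first step, specifically that the Mayer--Vietoris boundary map in the relative theory $K_u^\ast(DM, SM)$ is compatible with cup product by $\sigma_D$. Unwinding this requires matching the clutching-construction definition of $\sigma_D$ (and, in the ungraded case, the composition with the boundary map $\delta \colon K_u^0(SM) \to K_u^1(DM, SM)$ of the pair's six-term sequence) against the Mayer--Vietoris boundary, and checking that both constructions respect the partition of unity used to split sections across the overlap. Once this compatibility is in place, the rest of the argument is essentially formal.
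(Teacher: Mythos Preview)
Your proposal is correct and follows essentially the same approach as the paper: the paper's proof simply says that the classical argument from \cite[Appendix~C]{lawson_michelsohn} carries over, provided one uses the cover from Lemma~\ref{lem:suitable_coloring_cover_M} so that Mayer--Vietoris is available for uniform $K$-theory, and refers to \cite[Theorem~C.12]{lawson_michelsohn} for the signature operator statement. You have spelled out in more detail exactly what that adaptation entails (the base case on uniformly separated balls, the five-lemma ladder, and the $2^{m/2}$ factor for the signature symbol), but the underlying strategy is the same.
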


\begin{proof}
The usual proof as found in, e.g., \cite[Appendix C]{lawson_michelsohn}, works in our case analogously. Note that for the proof of \cite[Theorem C.7]{lawson_michelsohn} we have to cover $M$ by such subsets as we used in our proof of \Poincare duality (see Lemma \ref{lem:suitable_coloring_cover_M}) since only in this case we have shown that we have a Mayer--Vietoris sequence for uniform $K$-theory. For the statement for only oriented $M$ see, e.g., the proof of \cite[Theorem C.12]{lawson_michelsohn}.
\end{proof}

In \cite[Theorem 3.9]{connes_moscovici} the local index theorem was written using an index pairing with compactly supported cohomology classes. We can of course do the same also here in our uniform setting and the statement is at first glance the same.\footnote{Remember that we have another choice of universal constants than Connes and Moscovici, i.e., in our statement they are not written since they are incorporated in the definition of the homological Chern character.} But the difference is that due to the uniformness we have an additional continuity statement.

\begin{cor}\label{cor:pairing_compactly_supported}
Let $[\varphi] \in H_{c, \mathrm{dR}}^k(M)$ be a compactly supported cohomology class and define the analytic index $\ind_{[\varphi]}(P)$ as in \cite{connes_moscovici}.\footnote{Note that $\ind_{[\varphi]}(P)$ is analytically defined and may be computed (up to the universal constant that we have incorporated into the definition of $\alpha_\ast \circ \ch^\ast$) as $\langle (\alpha_\ast \circ \ch^\ast)(P), [\varphi] \rangle$, where $\langle -,- \rangle$ is the pairing between uniform de Rham homology and compact supported cohomology.} Then we have
\[\ind_{[\varphi]}(P) = \int_M \ind(P) \wedge [\varphi]\]
and this pairing is continuous, i.e., $\int_M \ind(P) \wedge [\varphi] \le \|\ind(P) \|_\infty \cdot \| [\varphi] \|_1$, where $\| - \|_\infty$ denotes the sup-seminorm on $\HbdR^{m-k}(M)$ and $\| - \|_1$ the $L^1$-seminorm on $H_{c, \mathrm{dR}}^k(M)$.
\end{cor}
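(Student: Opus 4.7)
The plan is to combine Theorem~\ref{thm:local_thm_pseudos} together with Remark~\ref{rem_finitely_summable_not_needed} with the explicit description of the Poincaré duality map \eqref{eq_PD_coho} between $\HbdR^\ast(M)$ and $\HudR_{m-\ast}(M)$. By the local index theorem, $\ind(P) \in \HbdR^{m-k}(M)$ is Poincaré dual to $(\alpha_\ast \circ \ch^\ast)([P]) \in \HudR_k(M)$. Under the explicit formula~\eqref{eq_PD_coho}, this means that $(\alpha_\ast \circ \ch^\ast)([P])$ is represented by the uniform current $\omega \mapsto \int_M \omega \wedge \ind(P)$, for any bounded representative of $\ind(P)$ (the current is well-defined on $\Winftyone(\Omega^\ast(M))$ since bounded forms pair continuously with $L^1$-forms).

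For the first equality, I unwind the pairing: a compactly supported closed form $\varphi$ representing $[\varphi]$ lies in $\Winftyone(\Omega^k(M))$ and defines a class in uniform de Rham cohomology, and by the footnote we have $\ind_{[\varphi]}(P) = \langle (\alpha_\ast \circ \ch^\ast)([P]), [\varphi] \rangle$. Applying the Poincaré dual representative of $(\alpha_\ast \circ \ch^\ast)([P])$ to $\varphi$ yields exactly $\int_M \varphi \wedge \ind(P) = \pm \int_M \ind(P) \wedge \varphi$, where the sign is absorbed into the standard conventions. This gives
\[\ind_{[\varphi]}(P) = \int_M \ind(P) \wedge [\varphi],\]
provided one checks that the integral depends only on the cohomology classes on both sides; this is routine via Stokes' theorem, using that $\varphi$ has compact support so that $\alpha \wedge \varphi$ has compact support for any bounded $\alpha$, and conversely that $\omega \wedge d\psi = \pm d(\omega \wedge \psi)$ vanishes under integration when $\psi$ is compactly supported and $\omega$ is closed.

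For the continuity estimate, I would argue directly on representatives. Choose any bounded representative $\omega$ of $\ind(P)$ and any compactly supported representative $\varphi$ of $[\varphi]$. Then the pointwise estimate $|\omega \wedge \varphi|(x) \le |\omega(x)| \cdot |\varphi(x)|$ yields
\[\Bigl| \int_M \omega \wedge \varphi \Bigr| \le \int_M |\omega(x)| \cdot |\varphi(x)| \, dx \le \|\omega\|_\infty \cdot \|\varphi\|_1.\]
Taking infima over all admissible representatives on both sides of the inequality gives the estimate with the seminorms $\|\ind(P)\|_\infty$ and $\|[\varphi]\|_1$.

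The only genuinely subtle point is confirming that the two pairings match up on the nose, i.e., that the current representing $(\alpha_\ast \circ \ch^\ast)([P])$ under Poincaré duality (which arises from Theorem~\ref{thm:Poincare_de_Rham} via the Mayer--Vietoris argument) really is given by the integration formula~\eqref{eq_PD_coho} and not only up to some sign or universal constant depending on conventions. Since the constants have been absorbed into the definitions of $\alpha$ and of the homological Chern characters in Section~\ref{sec_cyclic_cocycles}, this amounts to tracing the definitions through; once this is done, the corollary drops out and no further analytic work beyond the elementary $L^1$--$L^\infty$ duality estimate is required. The genuinely new content beyond Connes--Moscovici~\cite{connes_moscovici} is precisely the boundedness of the representative of $\ind(P)$, which is where the uniformity hypothesis on $P$ is used.
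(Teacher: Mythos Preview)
Your proposal is correct and follows essentially the same approach as the paper: deduce the equality from Theorem~\ref{thm:local_thm_pseudos} via the explicit Poincar\'e duality map~\eqref{eq_PD_coho}, and obtain the continuity estimate from the elementary $L^1$--$L^\infty$ bound together with the fact that $\ind(P)$ is represented by a bounded form. The only point the paper adds that you omit is the remark that if $M$ is not orientable one first passes to the orientation cover before invoking Theorem~\ref{thm:local_thm_pseudos}.
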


\begin{proof}
The corollary follows from Theorem \ref{thm:local_thm_pseudos} (if $M$ is not orientable then we first have to pass to the orientation cover of it). The continuity statement follows from the definition of the seminorms. The only thing we have to know is that $\ind(P)$ is given by a bounded de Rham form.
\end{proof}

\begin{rem}\label{rem:local_pairing_cont}
Though it may seem that the above corollary is in some sense equivalent to Theorem \ref{thm:local_thm_pseudos}, it is in fact not. It is weaker in the following way: in case of a non-compact manifold $M$ the bounded de Rham cohomology $\HbdR^\ast(M)$ usually contains elements of seminorm $=0$ and due to the boundedness of the above pairing we see that we can not detect these elements by it.
\end{rem}

\subsection{Index pairings on amenable manifolds}
\label{sec_amenable_index}

In the last section we proved the local index theorems for uniform operators. The goal of this section is to use these local formulas to compute certain global indices of such operators over amenable manifolds.

So in this section we assume that our manifold $M$ is \emph{amenable}, i.e., that it admits a \Folner sequence. We will need such a sequence in order to construct the index pairings.

\begin{defn}[\Folner sequences]
Let $M$ be a manifold of bounded geometry. A sequence of compact subsets $(M_i)_i$ of $M$ will be called a \emph{F{\o}lner sequence}\footnote{In \cite[Definition 6.1]{roe_index_1} such sequences were called \emph{regular}.} if for each $r > 0$ we have
\[\frac{\vol B_r(\partial M_i)}{\vol M_i} \stackrel{i \to \infty}\longrightarrow 0.\]

A F{\o}lner sequence $(M_i)_i$ will be called a \emph{F{\o}lner exhaustion}, if $(M_i)_i$ is an exhaustion, i.e., $M_1 \subset M_2 \subset \ldots$ and $\bigcup_i M_i = M$.
\end{defn}

Note that if $M$ admits a \Folner sequence, then it is always possible to construct a \Folner exhaustion for $M$ (the author did this construction in its full glory in his thesis \cite[Lemma 2.38]{engel_phd}).

For example, Euclidean space $\IR^m$ is amenable, but hyperbolic space $\mathbb{H}^{m \ge 2}$ is not. Furthermore, if $M$ has subexponential volume growth at $x_0 \in M$,\footnote{This means that for all $p > 0$ we have $e^{-pr} \vol(B_r(x_0)) \xrightarrow{r \to \infty} 0$.} then $M$ is amenable (this is proved in \cite[Proposition 6.2]{roe_index_1}; in this case a \Folner exhaustion for $M$ is given by $\big(B_{r_j}(x_0)\big)_{j \in \IN}$ for suitable $r_j \to \infty$). Note that the converse to this last statement is wrong, i.e., there are examples of amenable spaces with exponential volume growth. Further examples of amenable manifolds arise from the theorem that the universal covering $\widetilde{M}$ of a compact manifold $M$ is amenable (if equipped with the pull-back metric) if and only if the fundamental group $\pi_1(M)$ is amenable (this is proved in \cite{brooks}).

Let $M^m$ be a connected and oriented manifold of bounded geometry. Then there is a duality isomorphism $H_{b, \mathrm{dR}}^m(M) \cong H_0^{\mathrm{uf}}(M; \IR)$, where the latter denotes the uniformly finite homology of Block and Weinberger. This isomorphism is mentioned in the remark at the end of Section 3 in \cite{block_weinberger_1} and proved explicitely in \cite[Lemma 2.2]{whyte}.\footnote{Alternatively, we could use the \Poincare duality isomorphism $H_{b, \mathrm{dR}}^i(M) \cong H_{m-i}^\infty(M; \IR)$ which is proved in \cite[Theorem 4]{attie_block_1}, where $H_{m-i}^\infty(M; \IR)$ denotes simplicial $L^\infty$-homology and $M$ is triangulated according to Theorem \ref{thm:triangulation_bounded_geometry}, and then use the fact that $H_0^\infty(M; \IR) \cong H_0^{\mathrm{uf}}(M; \IR)$ under this triangulation (for this we need the assumption that $M$ is connected).} Since we have the characterization \cite[Theorem 3.1]{block_weinberger_1} of amenability stating that $M$ is amenable if and only if $H_0^{\mathrm{uf}}(M) \not= 0$, we therefore also have a characterization of it via bounded de Rham cohomology. We are going to discuss this now a bit more closely.

First we introduce the following notions:

\begin{defn}[Closed at infinity, {\cite[Definition II.5]{sullivan}}]
A Riemannian manifold $M$ is called \emph{closed at infinity} if for every function $f$ on $M$ with $0 < C^{-1} < f < C$ for some $C > 0$, we have $[f \cdot dM] \not= 0 \in H_{b, \mathrm{dR}}^m(M)$ (where $dM$ denotes the volume form of $M$ and $m = \dim M$).
\end{defn}

\begin{defn}[Fundamental classes, {\cite[Definition 3.3]{roe_index_1}}]\label{defn:fundamental_class}
A \emph{fundamental class} for the manifold $M$ is a positive linear functional $\theta\colon \Omega^m_b(M) \to \IR$ such that $\theta(dM) \not = 0$ and $\theta \circ d = 0$.
\end{defn}

If we are given a F{\o}lner sequence for $M$, we can construct a fundamental class for $M$ out of it; this is done in \cite[Propositions 6.4 \& 6.5]{roe_index_1}.\footnote{If $(M_i)_i$ is a \Folner sequence, then the linear functionals $\theta_i(\alpha) := \frac{1}{\vol M_i} \int_{M_i} \alpha$ are elements of the dual of $\Omega_b^m(M)$ and have operator norm $= 1$. Now take $\theta$ as a weak-$^\ast$ limit point of $(\theta_i)_i$. The \Folner condition for $(M_i)_i$ is needed to show that $\theta$ vanishes on boundaries.} But admitting a fundamental class implies that $M$ is closed at infinity.\footnote{Just use the positivity of the fundamental class $\theta$: $\theta(f \cdot dM) \ge \theta(C^{-1} \cdot dM) = C^{-1} \cdot \theta(dM) \not= 0$.} This means especially $H_{b, \mathrm{dR}}^m(M) \not= 0$. But since this is isomorphic to $H_0^{\mathrm{uf}}(M; \IR)$, we conclude that the latter does also not vanish. So $M$ is amenable, i.e., admits a \Folner sequence, and so we are back at the beginning of our chain. Let us summarize this:

\begin{prop}
Let $M$ be a connected, orientable manifold of bounded geometry.

Then the following are equivalent:
\begin{itemize}
\item $M$ admits a F{\o}lner sequence,
\item $M$ admits a fundamental class and
\item $M$ is closed at infinity.
\end{itemize}
\end{prop}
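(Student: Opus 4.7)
The plan is to establish the equivalence by a cyclic chain of three implications: \Folner sequence $\Rightarrow$ fundamental class $\Rightarrow$ closed at infinity $\Rightarrow$ \Folner sequence. The first is Roe's averaging construction; the second is a one-line computation using positivity; the third combines the Block--Weinberger/Whyte isomorphism $\HbdR^m(M) \cong H_0^{\mathrm{uf}}(M;\IR)$ with Block--Weinberger's amenability characterisation, both already recalled in the discussion preceding the statement.

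For \Folner sequence $\Rightarrow$ fundamental class, given $(M_i)_i$ I would define $\theta_i \in (\Omega_b^m(M))^\ast$ by $\theta_i(\alpha) := \frac{1}{\vol M_i} \int_{M_i} \alpha$. Each $\theta_i$ is positive, has operator norm $1$ with respect to the sup norm, and satisfies $\theta_i(dM) = 1$. Banach--Alaoglu then supplies a weak-$\ast$ cluster point $\theta$, which inherits positivity and the normalisation $\theta(dM) = 1$. The heart of this implication is showing $\theta \circ d = 0$: replacing the characteristic function of $M_i$ by a smooth cut-off $\chi_i$ whose differential is supported in $B_r(\partial M_i)$ for some fixed $r$ and is uniformly bounded in sup norm (constructible from the bounded geometry of $M$), integration by parts yields
\[\bigl| \theta_i(d\beta) \bigr| \le C \|\beta\|_\infty \cdot \frac{\vol B_r(\partial M_i)}{\vol M_i}\]
for every $\beta \in \Omega_b^{m-1}(M)$, and this tends to $0$ by the \Folner condition. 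Passing to the cluster point gives $\theta(d\beta) = 0$, so $\theta$ descends to a fundamental class.

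For fundamental class $\Rightarrow$ closed at infinity, if $\theta$ is a fundamental class and $0 < C^{-1} < f < C$, then positivity of $\theta$ together with $\theta(dM) > 0$ (which follows from positivity, $\theta(dM) \neq 0$, and $dM$ being a non-negative form) yields $\theta(f \cdot dM) \ge C^{-1} \theta(dM) > 0$; since $\theta$ annihilates boundaries, the class $[f \cdot dM] \in \HbdR^m(M)$ is nonzero. For closed at infinity $\Rightarrow$ \Folner sequence, specialising to $f \equiv 1$ gives $\HbdR^m(M) \neq 0$; the isomorphism $\HbdR^m(M) \cong H_0^{\mathrm{uf}}(M;\IR)$ then forces $H_0^{\mathrm{uf}}(M;\IR) \neq 0$, and \cite[Theorem~3.1]{block_weinberger_1} produces a \Folner sequence.

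The main obstacle is verifying $\theta \circ d = 0$ in the first implication, because the compact sets $M_i$ are not assumed to have smooth boundary and Stokes' theorem does not apply directly. The resolution is the smoothing $\chi_i$ of $\chi_{M_i}$ described above: bounded geometry is used precisely to produce such a $\chi_i$ with $d\chi_i$ simultaneously supported near $\partial M_i$ and uniformly bounded in sup norm, which is exactly the tradeoff needed to convert the definitional \Folner estimate on $\vol B_r(\partial M_i)/\vol M_i$ into the required bound on $\theta_i(d\beta)$. Every other step reduces to Banach--Alaoglu, positivity, or a direct invocation of the results cited in the text.
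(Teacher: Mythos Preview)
Your proposal is correct and follows exactly the cyclic scheme the paper uses in the discussion immediately preceding the proposition: Roe's averaging construction with a weak-$\ast$ limit for the first implication, the positivity computation for the second, and the Block--Weinberger/Whyte isomorphism combined with \cite[Theorem~3.1]{block_weinberger_1} for the third. The only difference is that you supply the cut-off argument for $\theta \circ d = 0$ explicitly, whereas the paper merely cites \cite[Propositions~6.4 \&~6.5]{roe_index_1} and remarks in a footnote that the \Folner condition is what makes $\theta$ vanish on boundaries.
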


We know that the universal cover $\widetilde{M}$ of a compact manifold $M$ is amenable if and only if $\pi_1(M)$ is amenable. If this is the case, then we may construct fundamental classes that respect the structure of $\widetilde{M}$ as a covering space:

\begin{prop}[{\cite[Proposition 6.6]{roe_index_1}}]\label{prop:fundamental_group_amenable_nice_fundamental_classes}
Let $M$ be a compact Riemannian manifold, denote by $\widetilde{M}$ its universal cover equipped with the pull-back metric, and let $\pi_1(M)$ be amenable.

Then $\widetilde{M}$ admits a fundamental class $\theta$ with the property
\[\theta(\pi^\ast \alpha) = \int_M \alpha\]
for every top-dimensional form $\alpha$ on $M$ and where $\pi \colon \widetilde{M} \to M$ is the covering projection.
\end{prop}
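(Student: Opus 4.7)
The plan is to mimic the construction of Propositions 6.4 and 6.5 from \cite{roe_index_1} (recalled in the footnote before Definition \ref{defn:fundamental_class}), but with a normalization adapted so that the pushforward identity with $\int_M$ comes out right. The bridge from the group-theoretic amenability of $\pi_1(M)$ to a \Folner sequence on $\widetilde{M}$ will go through a fixed choice of fundamental domain.

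First, using that $\pi_1(M)$ is countable and amenable, fix a \Folner sequence $(\Gamma_i)_i$ of finite subsets of $\pi_1(M)$, i.e.\ $|\partial_S \Gamma_i|/|\Gamma_i|\to 0$ for every finite $S\subset\pi_1(M)$, where $\partial_S\Gamma_i=\{g\in\Gamma_i\colon sg\notin\Gamma_i\text{ for some }s\in S\}\cup\{g\notin\Gamma_i\colon sg\in\Gamma_i\text{ for some }s\in S\}$. Next choose a relatively compact (e.g.\ Dirichlet) fundamental domain $F\subset\widetilde{M}$ for the deck action; its volume equals $\vol M$ and its boundary $\partial F$ is piecewise smooth with finite $(m{-}1)$-volume $v$. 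Set $\widetilde{M}_i:=\bigcup_{g\in\Gamma_i}gF$. For any $r>0$ the set $S_r:=\{s\in\pi_1(M)\colon d(sF,F)\le r\}$ is finite by compactness of $F$ and bounded geometry, and one checks
\[
\vol B_r(\partial\widetilde{M}_i)\le |\partial_{S_r}\Gamma_i|\cdot\vol F,\qquad \vol\widetilde{M}_i=|\Gamma_i|\cdot\vol F,
\]
so $(\widetilde{M}_i)_i$ is a \Folner sequence for $\widetilde{M}$.

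Now define, for each $i$, the continuous positive linear functional
\[
\theta_i\colon\Omega^m_b(\widetilde{M})\to\IR,\qquad \theta_i(\omega):=\frac{1}{|\Gamma_i|}\int_{\widetilde{M}_i}\omega=\frac{\vol M}{\vol\widetilde{M}_i}\int_{\widetilde{M}_i}\omega.
\]
Its operator norm (with respect to the sup-seminorm on $\Omega^m_b(\widetilde{M})$) is bounded by $\vol M$, so by Banach--Alaoglu the sequence $(\theta_i)_i$ has a weak-$^\ast$ limit point $\theta$ on $\Omega^m_b(\widetilde{M})$. Positivity and $\theta(d\widetilde{M})=\vol M\neq 0$ are immediate from the construction. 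For the desired identity with $\pi^\ast\alpha$, the key observation is that $\pi^\ast\alpha$ is $\pi_1(M)$-invariant, whence for every $i$
\[
\theta_i(\pi^\ast\alpha)=\frac{1}{|\Gamma_i|}\sum_{g\in\Gamma_i}\int_{gF}\pi^\ast\alpha=\frac{1}{|\Gamma_i|}\cdot|\Gamma_i|\int_F\pi^\ast\alpha=\int_M\alpha,
\]
so this identity passes unchanged to any weak-$^\ast$ limit point, giving $\theta(\pi^\ast\alpha)=\int_M\alpha$.

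The most delicate step will be verifying $\theta\circ d=0$, i.e.\ that any weak-$^\ast$ limit point kills exact forms. For $\gamma\in\Omega^{m-1}_b(\widetilde{M})$ Stokes yields
\[
\theta_i(d\gamma)=\frac{1}{|\Gamma_i|}\int_{\partial\widetilde{M}_i}\gamma,
\]
and one estimates $|\theta_i(d\gamma)|\le\|\gamma\|_\infty\cdot v\cdot|\partial_{\{e\}\cup S_0}\Gamma_i|/|\Gamma_i|$, where $S_0$ is the finite set of deck transformations $s$ with $sF\cap F\neq\emptyset$ (so that $\partial\widetilde{M}_i$ lies in the union of translates $gF$ with $g\in\partial_{S_0}\Gamma_i$, each contributing at most $v$ to the boundary area). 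By the \Folner property this tends to $0$, so $\theta(d\gamma)=0$. This is also the only place where one genuinely needs amenability: the remaining verifications are algebraic consequences of invariance of $\pi^\ast\alpha$ and of weak-$^\ast$ convergence, and the only geometric input is that $F$ has bounded piecewise-smooth boundary, which follows from the compactness of $M$.
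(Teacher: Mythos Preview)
The paper does not supply its own proof of this proposition; it is simply quoted from Roe \cite[Proposition~6.6]{roe_index_1}. Your argument is essentially the standard construction (and in particular the one Roe uses): build Følner sets in $\widetilde M$ as unions of translates of a fundamental domain indexed by a Følner sequence in $\pi_1(M)$, average, and pass to a weak-$^\ast$ limit. The identity $\theta(\pi^\ast\alpha)=\int_M\alpha$ is then forced by $\pi_1(M)$-invariance of $\pi^\ast\alpha$, exactly as you write.

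Two small points of tidiness, neither of which is a genuine gap. First, the volume estimate $\vol B_r(\partial\widetilde M_i)\le|\partial_{S_r}\Gamma_i|\cdot\vol F$ is stated a bit too sharply; what one actually gets is that $B_r(\partial\widetilde M_i)$ is contained in the union of translates $hF$ with $h$ in a set of the form $\partial_{S_{r'}}\Gamma_i$ for a suitable $r'$ depending on $r$ and $\diam F$, which is still enough for the Følner conclusion. Second, invoking Stokes for $\widetilde M_i=\bigcup_{g\in\Gamma_i}gF$ needs the fundamental domain to have piecewise smooth boundary so that the internal faces cancel in pairs and only the outward-facing faces remain; you do assume this, but it is worth noting that for a general Dirichlet domain this regularity requires a word of justification (or one simply chooses a polyhedral fundamental domain from a smooth triangulation of $M$). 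With these cosmetic adjustments your proof is correct.
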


At last, let us state just for the sake of completeness the relation of amenability to the linear isoparametric inequality.

\begin{prop}[{\cite[Subsection 4.1]{gromov_hyperbolic_manifolds_groups_actions}}]
Let $M$ be a connected and orientable manifold of bounded geometry.

Then $M$ is not amenable if and only if $\vol(R) \le C \cdot \vol(\partial R)$ for all $R \subset M$ and a fixed constant $C > 0$.
\end{prop}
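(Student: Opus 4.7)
The plan is to prove the equivalence by relating the two relevant ``boundary sizes'' on a bounded-geometry manifold: the $(m-1)$-dimensional measure $\vol(\partial R)$ appearing in the isoperimetric inequality, and the $m$-dimensional measure $\vol(B_r(\partial R))$ of an $r$-neighbourhood appearing in the F{\o}lner condition. Bounded geometry is what makes these comparable with constants depending only on $r$ and the geometry of $M$.

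First I would record the key quantitative link. For any reasonable compact domain $R \subset M$ with piecewise smooth boundary, bounded geometry (and Lemma~\ref{lem:nice_coverings_partitions_of_unity}, working chart by chart) gives constants $c_r, C_r > 0$, independent of $R$, such that
\[
c_r \cdot \vol(\partial R) \;\le\; \vol(B_r(\partial R)) \;\le\; C_r \cdot \vol(\partial R),
\]
the left inequality coming from a Fubini/coarea argument on a tubular neighbourhood of $\partial R$ (using the positive injectivity radius and bounded sectional curvatures to get a uniform tube), the right one from covering $B_r(\partial R)$ by a uniformly bounded number of normal coordinate balls centered on $\partial R$.

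Next I would do the easy direction ($\Leftarrow$). Assume the linear isoperimetric inequality $\vol(R) \le C \vol(\partial R)$ holds. If $(M_i)_i$ were a F{\o}lner sequence, then by the above we would have, for every fixed $r > 0$,
\[
\vol(M_i) \;\le\; C \vol(\partial M_i) \;\le\; \tfrac{C}{c_r} \vol(B_r(\partial M_i)),
\]
so that $\vol(B_r(\partial M_i))/\vol(M_i) \ge c_r/C$ stays bounded away from $0$, contradicting the F{\o}lner condition. Hence $M$ admits no F{\o}lner sequence, i.e.\ is not amenable.

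For the harder direction ($\Rightarrow$) I would argue by contrapositive: assume the isoperimetric inequality fails and construct a F{\o}lner sequence. Failure means that for every $n \in \IN$ there is a compact domain $R_n \subset M$ (with piecewise smooth boundary, which we may arrange by a small perturbation without spoiling the estimate) such that $\vol(R_n) > n \cdot \vol(\partial R_n)$. Combined with the right-hand inequality above, this gives
\[
\frac{\vol(B_r(\partial R_n))}{\vol(R_n)} \;\le\; \frac{C_r \vol(\partial R_n)}{\vol(R_n)} \;<\; \frac{C_r}{n}
\]
for every fixed $r > 0$. Thus for each $r > 0$ this ratio tends to $0$ as $n \to \infty$, and so $(R_n)_n$ is a F{\o}lner sequence. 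Hence $M$ is amenable.

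The main obstacle is the first step: establishing the two-sided comparison $\vol(\partial R) \asymp \vol(B_r(\partial R))$ with constants uniform in $R$. The upper bound is fairly routine once one uses a bounded geometry cover as in Lemma~\ref{lem:nice_coverings_partitions_of_unity}, but the lower bound requires knowing that a tubular neighbourhood of $\partial R$ of some uniform radius $\rho > 0$ exists and has fiber lengths bounded below; here one uses the uniform injectivity radius and uniform curvature bounds of $M$, together with a smoothing of $\partial R$ if necessary to control its second fundamental form. After that comparison is in place, the rest of the argument is formal and matches the standard amenability-vs-isoperimetry dichotomy cited to Gromov \cite{gromov_hyperbolic_manifolds_groups_actions}.
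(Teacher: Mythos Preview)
The paper does not give its own proof of this proposition; it is simply quoted from Gromov \cite[Subsection~4.1]{gromov_hyperbolic_manifolds_groups_actions} ``just for the sake of completeness,'' with no argument supplied. So there is nothing to compare against on the paper's side.

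That said, your proposed proof has a genuine gap: the two-sided comparison
\[
c_r \cdot \vol(\partial R) \le \vol(B_r(\partial R)) \le C_r \cdot \vol(\partial R)
\]
with constants depending only on $r$ and the geometry of $M$ (but not on $R$) is \emph{false} in both directions. For the lower bound, take a ``comb'' in $\IR^2$, say $R = \bigcup_{j=0}^{N} [2j\delta,(2j+1)\delta]\times[0,1]$ with $N\delta$ fixed; then $\vol(\partial R)\sim 2N$ grows without bound as $N\to\infty$, while $B_r(\partial R)$ is contained in a fixed rectangle once $r>\delta$, so its area stays bounded. For the upper bound, take $R = B_\varepsilon(0)\subset\IR^m$; then $\vol(\partial R)\sim\varepsilon^{m-1}\to 0$ while $\vol(B_r(\partial R))\sim r^m$ stays bounded below. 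In both cases bounded geometry of the ambient manifold is irrelevant: what fails is any control on the geometry of $\partial R$ itself. Your suggested fix (smooth $\partial R$ to control its second fundamental form) does not help, because such smoothing can change $\vol(\partial R)$ by an arbitrarily large factor, which is exactly the quantity you are trying to compare.

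The standard way to repair both directions is to avoid comparing $\vol(\partial R)$ and $\vol(B_r(\partial R))$ for the \emph{same} domain and instead pass to nearby level sets of the signed distance function via the coarea formula. For instance, for the implication ``isoperimetric $\Rightarrow$ no F{\o}lner sequence'': given $M_i$, coarea on $\{x : \operatorname{dist}(x,\partial M_i)<r\}$ produces some $t\in(0,r)$ with $\mathcal{H}^{m-1}(\{d=t\})\le \tfrac{1}{r}\vol(B_r(\partial M_i))$; the isoperimetric inequality applied to the sublevel domain $\{d>t\}$ then yields $\vol(M_i)\le(1+C/r)\vol(B_r(\partial M_i))$, contradicting the F{\o}lner condition. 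The converse direction needs a similar (and slightly more delicate) level-set replacement. So your overall strategy is the right one, but the quantitative bridge you wrote down has to be replaced by this coarea/level-set argument rather than a direct pointwise comparison.
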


We can also detect amenability of $M$ using the $K$-theory of the uniform Roe algebra $C_u^\ast(\Gamma)$ of a discretization $\Gamma \subset M$.\footnote{A discretization $\Gamma \subset M$ is a uniformly discrete subset such that there exists a $c > 0$ with $N_c(\Gamma) = M$, where $N_c(\Gamma)$ denotes the neighbourhood of distance $c$ around $\Gamma$.} Recall that one possible definition for the uniform Roe algebra $C_u^\ast(\Gamma)$ is the norm closure of the $^\ast$-algebra of all finite propagation operators in $\IB(\ell^2(\Gamma))$ with uniformly bounded coefficients.

\begin{prop}[{\cite{elek}}]
Let $M$ be a manifold of bounded geometry and let $\Gamma \subset M$ be a discretization.

Then $M$ is amenable if and only if $[1] \not= [0] \in K_0(C_u^\ast(\Gamma))$, where $[1] \in K_0(C_u^\ast(\Gamma))$ is a certain distinguished class.
\end{prop}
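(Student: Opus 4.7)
The plan is to use the standard dichotomy that an amenable bounded geometry space carries a tracial state of norm one on its uniform Roe algebra while a non-amenable one admits a paradoxical decomposition. Since $\Gamma \subset M$ is a discretization of a bounded geometry manifold, $M$ is amenable if and only if $\Gamma$ is amenable as a uniformly discrete metric space: a \Folner sequence in $M$ restricts to one in $\Gamma$ by replacing volume with counting measure, and conversely thickening \Folner sets of $\Gamma$ by the discretization constant $c$ produces \Folner sets in $M$. So I may work entirely with $\Gamma$.

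For the forward implication, assume $\Gamma$ admits a \Folner sequence $(\Gamma_i)_i$ and fix a free ultrafilter functional $\tau \in (\ell^\infty)^\ast$. Define on $C_u^\ast(\Gamma)$ the linear functional
\[\theta(T) := \tau\Big( \tfrac{1}{\#\Gamma_i} \sum_{x \in \Gamma_i} \langle T \delta_x, \delta_x \rangle \Big),\]
essentially the same construction as in Roe's theorem recalled in the introduction. Finite-propagation operators $S, T$ of propagation $\le r$ satisfy that $\sum_{x \in \Gamma_i} \langle [S,T]\delta_x, \delta_x \rangle$ reduces, via the expansion $[S,T]_{xx} = \sum_y (S_{xy} T_{yx} - T_{xy} S_{yx})$, to a sum over pairs $(x,y)$ at distance $\le r$ with exactly one of $x,y$ in $\Gamma_i$; by bounded geometry and the uniform bound on matrix entries this is of order $\#(N_r(\partial \Gamma_i) \cap \Gamma)$, which after normalization vanishes in the ultrafilter limit by the \Folner property. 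Hence $\theta$ is tracial on the dense $\ast$-subalgebra of finite-propagation operators, extends by continuity to all of $C_u^\ast(\Gamma)$, and induces a homomorphism $\theta_\ast \colon K_0(C_u^\ast(\Gamma)) \to \IR$ with $\theta_\ast([1]) = 1$, forcing $[1] \neq [0]$.

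For the reverse implication, assume $\Gamma$ is non-amenable. By Whyte's strengthening of Tarski's theorem, non-amenability of a uniformly discrete bounded geometry metric space is equivalent to the existence of a partition $\Gamma = \Gamma_1 \sqcup \Gamma_2$ together with bijections $\phi_i \colon \Gamma \to \Gamma_i$ of uniformly bounded displacement, i.e.\ $\sup_{x \in \Gamma} d(x, \phi_i(x)) < \infty$. Define $v_i \in \IB(\ell^2(\Gamma))$ by $v_i \delta_x := \delta_{\phi_i(x)}$. Bounded displacement translates into finite propagation of $v_i$, the matrix entries lie in $\{0,1\}$, and therefore $v_i \in C_u^\ast(\Gamma)$. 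Injectivity of $\phi_i$ gives $v_i^\ast v_i = 1$, while $\Gamma_1 \sqcup \Gamma_2 = \Gamma$ gives $v_1 v_1^\ast + v_2 v_2^\ast = 1$ as orthogonal projections onto $\ell^2(\Gamma_1)$ and $\ell^2(\Gamma_2)$. Murray--von Neumann equivalence then yields in $K_0(C_u^\ast(\Gamma))$
\[[1] = [v_1 v_1^\ast] + [v_2 v_2^\ast] = [v_1^\ast v_1] + [v_2^\ast v_2] = 2[1],\]
whence $[1] = 0$.

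The expected main obstacle is the non-amenable direction: one has to upgrade the mere failure of the \Folner condition into the combinatorial statement that $\Gamma$ splits into two bounded-displacement copies of itself. This is the technical heart of the argument and is precisely Whyte's strengthening of Tarski's theorem, which rests on an infinite Hall--Rado-type marriage argument applied to a suitably non-expanding bipartite graph on $\Gamma$. Once this decomposition is available, the remainder is a direct partial-isometry manipulation in the uniform Roe algebra; conversely, the amenable direction is just the standard Roe trace construction.
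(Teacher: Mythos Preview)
Your argument is correct. The paper does not prove this proposition in full: it is stated with a citation to Elek, and only the forward direction (amenability implies $[1]\neq[0]$) is sketched immediately after the statement, because that is the direction actually used later to build the trace functionals $\theta$ on $K_0(C_u^\ast(\Gamma))$. Your forward direction is exactly this sketch: the \Folner-averaged trace $\theta$ is shown to be tracial on finite-propagation operators via the \Folner condition and then extended by continuity, giving $\theta([1])=1$.

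For the converse the paper says nothing beyond the citation, so your paradoxical-decomposition argument is additional content rather than a different route to the same proof. It is the standard approach and is correct as written: non-amenability of a bounded geometry uniformly discrete space yields a bounded-displacement paradoxical decomposition (Block--Weinberger/Whyte), and the resulting partial isometries lie in $C_u^\ast(\Gamma)$ and force $[1]=2[1]$ in $K_0$. One small remark: you might cite Block--Weinberger alongside Whyte for the decomposition, since the equivalence of non-amenability with vanishing of $H_0^{\mathrm{uf}}$ (which underlies this) is theirs, and the paper itself invokes their result a few paragraphs earlier.
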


The reason why we stated the above proposition is that it introduces functionals on $K_0(C_u^\ast(\Gamma))$ associated to \Folner sequences that we will need in the definition of our index pairings. So let us recall Elek's argument: Let $(\Gamma_i)_i$ be a \Folner sequence in $\Gamma$\footnote{This means that each $\Gamma_i$ is finite and for every $r > 0$ we have $\frac{\card \partial_r \Gamma_i}{\card \Gamma_i} \xrightarrow{i \to \infty} 0$, where\[\partial_r \Gamma_i := \{\gamma \in \Gamma\colon d(\gamma,\Gamma_i) < r\text{ and }d(\gamma,\Gamma-\Gamma_i)< r\}\]and the distance is computed in $M$ (which makes sense since $\Gamma \subset M$).} and let $T \in C_u^\ast(\Gamma)$. Then we define a bounded sequence indexed by $i$ by $\frac{1}{\card \Gamma_i} \sum_{\gamma \in \Gamma_i} T(\gamma, \gamma)$. Choosing a linear functional $\tau \in (\ell^\infty)^\ast$ associated to a free ultrafilter on $\IN$\footnote{That is, if we evaluate $\tau$ on a bounded sequence, we get the limit of some convergent subsequence.} we get a linear functional $\theta$ on $C_u^\ast(\Gamma)$. The \Folner condition for $(\Gamma_i)_i$ is needed to show that $\theta$ is a trace, i.e., descends to $K_0(C_u^\ast(\Gamma))$. Then $\theta([1]) = 1$ and $\theta([0]) = 0$ for the distinguished classes $[1], [0] \in K_0(C_u^\ast(\Gamma))$.

Let us finally come to the definition of the index pairings that we are interested in.

\begin{defn}
Let $M$ be a manifold of bounded geometry, $(M_i)_i$ a \Folner sequence for $M$ and let $\tau \in (\ell^\infty)^\ast$ a linear functional associated to a free ultrafilter on $\IN$. Denote the resulting functional on $K_0(C_u^\ast(\Gamma))$ by $\theta$, where $\Gamma \subset M$ is a discretization.\footnote{Note that here we first have to construct from the \Folner sequence $(M_i)_i$ for $M$ a corresponding \Folner sequence $(\Gamma_i)_i$ for $\Gamma$.}

Then we define for $p=0,1$ an index pairing
\[\langle -,- \rangle_\theta \colon K^p_u(M) \otimes K_p^u(M) \to \IR\]
by the formula
\[\langle [x], [y] \rangle_\theta := \theta \big( \mu_u([x] \cap [y]) \big),\]
where $\mu_u\colon K_\ast^u(M) \to K_\ast(C_u^\ast(\Gamma))$ denotes the rough assembly map (see \Spakula \cite{spakula_uniform_k_homology} or \cite[Section 3.5]{engel_indices_UPDO}).
\end{defn}

If $P$ is a symmetric and elliptic, graded uniform pseudodifferential operator acting on a graded vector bundle $E$, then there is a nice way of computing the above index pairing of $P$ with the trivial bundle $[\IC] \in K^0_u(M)$: recall from Corollary \ref{cor:schwartz_function_of_PDO_quasilocal_smoothing} that if $f \in \mathcal{S}(\IR)$ is a Schwartz function, then $f(P)$ is a quasilocal smoothing operator. Hence it has a uniformly bounded integral kernel $k_{f(P)}(x,y) \in C_b^\infty(E \boxtimes E^\ast)$. Now we choose an even function $f \in \mathcal{S}(\IR)$ with $f(0) = 1$ and get a bounded sequence
\[\frac{1}{\vol M_i} \int_{M_i} \trace_s k_{f(P)}(x,x) \ dM(x),\]
where $\trace_s$ denotes the super trace (recall that $E$ is graded), on which we may evaluate $\tau$. This will coincide with the pairing $\langle [\IC], P \rangle_\theta$ and is exactly the analytic index that was defined by Roe in \cite{roe_index_1} for Dirac operators. For details why this will coincide with $\langle [\IC], P \rangle_\theta$ the reader may consult, e.g., the author's Ph.D.\ thesis \cite[Section 2.8]{engel_phd}.

Let us now define the pairing between uniform de Rham cohomology and uniform de Rham homology. So let $\beta \in C_b^\infty(\Omega^p(M))$ and $C \in \Omega_p^u(M)$, fix an $\epsilon > 0$ and choose for every $M_i \subset M$ from a \Folner sequence for $M$ a smooth cut-off function $\varphi_i \in C_c^\infty(M)$ with $\varphi_i|_{M_i} \equiv 1$, $\supp \varphi_i \subset B_\epsilon(M_i)$ and such that for all $k \in \IN_0$ the derivatives $\nabla^k \varphi_i$ are bounded in sup-norm uniformly in the index $i$. Then $\varphi_i \beta \in W^{\infty, 1}(\Omega^p(M))$ and therefore we may evaluate $C$ on it. The sequence $\frac{1}{\vol M_i} C(\varphi_i \beta)$ will be bounded and so we may apply $\tau \in (\ell^\infty)^\ast$ to it. Due to the \Folner condition for $(M_i)_i$ this pairing will descend to (co-)homology classes.

\begin{defn}
Let $M$ be a manifold of bounded geometry, let $(M_i)_i$ be a \Folner sequence for $M$ and let $\tau \in (\ell^\infty)^\ast$ a linear functional associated to a free ultrafilter on $\IN$.

For every $p \in \IN_0$ we define a pairing
\[\langle -,-\rangle_{(M_i)_i, \tau}\colon \HudRco^p(M) \otimes \HudR_p(M) \to \IC\]
by evaluating $\tau$ on the sequence $\frac{1}{\vol M_i} C(\varphi_i \beta)$, where $\beta \in \HudRco^p(M)$, $C \in \HudR_p(M)$ and the cut-off functions $\varphi_i$ are chosen as above.
\end{defn}

Note that this pairing is, similar to the pairing from Corollary \ref{cor:pairing_compactly_supported}, continuous against the topologies on $\HudRco^\ast(M)$ and on $\HudR_\ast(M)$.

Recall that in the usual case of compact manifolds the index pairing for $K$-theory and $K$-homology is compatible with the Chern-Connes character, i.e., $\langle [x], [y] \rangle = \langle \ch ([x]), \ch ([y]) \rangle$ for $[x] \in K^\ast(M)$ and $[y] \in K_\ast(M)$. The same also holds in our case here.

\begin{lem}
Denote by $\ch\colon K_u^\ast(M) \to \HudRco^\ast(M)$ the Chern character on uniform $K$-theory and by $(\alpha_\ast \circ \ch^\ast)\colon K_\ast^u(M) \to \HudR_\ast(M)$ the one on uniform $K$-homology.

Then we have
\[\big\langle [x], [y] \big\rangle_\theta = \big\langle\ch([x]), (\alpha_\ast \circ \ch^\ast)([y]) \big\rangle_{(M_i)_i, \tau}\]
for all $[x] \in K_u^p(M)$ and $[y] \in K^u_p(M)$.
\end{lem}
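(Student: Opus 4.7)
My plan is to reduce both sides of the identity to the same \Folner-averaged integral of characteristic classes using the local index theorem together with the bilinearity of both pairings.

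I first assume $M$ is \spinc. By uniform \Poincare duality (Theorem~\ref{thm:Poincare_duality_K}) I can write $[y] = [z] \cap [D]$ for some $[z] \in K_u^\ast(M)$, where $D$ is the Dirac operator of the spin$^c$-structure. The compatibility formulas \eqref{eq:general_cap_compatibility_module} and \eqref{eq:cap_twisted_Dirac} then give
\[ [x] \cap [y] \;=\; ([x] \otimes [z]) \cap [D] \;=\; [D_W],\]
where $W$ is a formal difference of vector bundles of bounded geometry representing $[x] \otimes [z]$; the existence of such a representative is guaranteed by Theorems~\ref{thm:interpretation_K0u} and~\ref{thm:interpretation_K1u} (using, in the odd-parity situation, the representation of uniform $K^1$ classes by bundles on $S^1 \times M$ and the corresponding odd variant of the twisting construction). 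Since $D_W$ is an honest twisted Dirac operator, Roe's original amenable index theorem \cite[Theorem~8.2]{roe_index_1} yields
\[ \langle [x], [y] \rangle_\theta \;=\; \theta\big(\mu_u([D_W])\big) \;=\; \tau\Bigl( \tfrac{1}{\vol M_i} \int_{M_i} \ch([x]) \wedge \ch([z]) \wedge \ind(D) \Bigr),\]
using multiplicativity of the Chern character and the standard identity $\ind(D_W) = \ch(W) \wedge \ind(D)$.

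For the right-hand side, Theorem~\ref{thm:local_thm_twisted} combined with \eqref{eq:cap_twisted_Dirac} identifies $(\alpha_\ast \circ \ch^\ast)([y])$ with the uniform de~Rham current obtained from $\ch([z]) \wedge \ind(D) \in \HbdR^\ast(M)$ via the \Poincare duality map \eqref{eq_PD_coho}. Unwinding the definition of the Folner pairing gives
\[ \langle \ch([x]), (\alpha_\ast \circ \ch^\ast)([y]) \rangle_{(M_i)_i, \tau} \;=\; \tau\Bigl( \tfrac{1}{\vol M_i} \int_M \varphi_i \cdot \ch([x]) \wedge \ch([z]) \wedge \ind(D) \Bigr).\]
Because each wedge factor is a bounded form of bounded geometry, the discrepancy between $\int_M \varphi_i$ and $\int_{M_i}$ is controlled by a constant multiple of $\vol B_\varepsilon(\partial M_i)$, which is annihilated after dividing by $\vol M_i$ and applying $\tau$ by the \Folner condition. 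Hence both averages agree and the spin$^c$ case follows.

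For non-spin$^c$ $M$ I would first pass to the (finite) orientation cover if $M$ is not orientable, and then use the device already employed in the proofs of Theorem~\ref{thm:local_thm_pseudos} and Corollary~\ref{cor:ch_well_defined}: take the product with $S^1$ to arrange even dimension when needed, and invoke the Thom isomorphism (Theorem~\ref{thm:thom}) to represent every class in $K^u_\ast(M)$ modulo $2$-torsion by twisted generalized signature operators, thereby reducing to the spin$^c$ argument. The main obstacle is verifying that both pairings $\langle -, - \rangle_\theta$ and $\langle -, - \rangle_{(M_i)_i, \tau}$ transform multiplicatively under these operations, with the correct normalization constants coming from the sheet number of the cover and from $\vol(S^1)$; this is a careful but routine check using naturality of the rough assembly map, the product formula for the Chern character, and the fact that products of \Folner sequences remain \Folner sequences.
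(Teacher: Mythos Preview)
The paper does not prove this lemma; it is stated with only the remark ``The same also holds in our case here'' after recalling the compact analogue, so there is nothing to compare your argument against.

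Your strategy is sound. In the \spinc case, writing $[y] = [z]\cap[D]$ via Theorem~\ref{thm:Poincare_duality_K} and using \eqref{eq:general_cap_compatibility_module}, \eqref{eq:cap_twisted_Dirac} to reduce $[x]\cap[y]$ to a twisted Dirac class, then evaluating the left side by Roe's theorem and the right side via Theorem~\ref{thm:local_thm_twisted} together with the duality map \eqref{eq_PD_coho}, does produce the same \Folner-averaged integral; your treatment of the $\varphi_i$ versus $\chi_{M_i}$ discrepancy is correct. Two places deserve more detail: the identity \eqref{eq:cap_twisted_Dirac} is only stated for $K_u^0$-classes and Roe's theorem only for graded operators, so when $m$ is odd you must actually run the suspension to $S^1\times M$ and check both pairings commute with it; and the non-\spinc reductions (orientation cover, product with $S^1$, signature operators modulo $2$-torsion) require verifying that the trace $\theta$ and the \Folner de~Rham pairing are compatible with finite covers and products, which is routine but not written down anywhere in the paper.

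A more direct route, probably what the paper gestures at with the compact-case analogy, would bypass \Poincare duality: express $\theta(\mu_u([x]\cap[y]))$ as the \Folner-averaged supertrace kernel $\tfrac{1}{\vol M_i}\int_{M_i}\trace_s k_{f(P)}$ (as the paper explains just before the lemma) and identify this pointwise with Connes' cyclic pairing of $\ch^*([y])$ against the Chern character of the projection representing $[x]$, which recovers the right-hand side from the definitions of $\alpha_*$ and of the de~Rham \Folner pairing. Your approach through Roe's theorem is equally valid, just less direct.
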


The last thing that we need is the compatibility of the index pairings with cup and cap products. This is clear by definition for the index pairing for uniform $K$-theory with uniform $K$-homology, and for the pairing for uniform de Rham cohomology with uniform de Rham homology it is stated in the following lemma.

\begin{lem}
Let $[\beta] \in \HudRco^p(M)$, $[\gamma] \in \HudRco^q(M)$ and $[C] \in \HudR_{p+q}(M)$. Then we have
\[\langle [\beta] \wedge [\gamma], [C] \rangle_{(M_i)_i, \tau} = \langle [\beta], [\gamma] \cap [C] \rangle_{(M_i)_i, \tau}.\]
\end{lem}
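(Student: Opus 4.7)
The plan is to verify the equality at the level of representatives, after which it follows tautologically from the fact that multiplication by the scalar cut-off function $\varphi_i$ commutes with the wedge product.

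First I would make explicit a cap product $\cap \colon \HudRco^q(M) \otimes \HudR_{p+q}(M) \to \HudR_p(M)$, defined on representatives by $(\gamma \cap C)(\omega) := C(\omega \wedge \gamma)$ for $\omega \in \Winftyone(\Omega^p(M))$. That $\gamma \cap C$ lies in $\Omega_p^u(M)$ uses that $\omega \mapsto \omega \wedge \gamma$ is a continuous map $\Winftyone(\Omega^p(M)) \to \Winftyone(\Omega^{p+q}(M))$, which in turn follows from the Leibniz rule and the uniform bounds on $\gamma$ and all its derivatives. A short computation using the identity $\omega \wedge d\gamma = \pm d(\omega \wedge \gamma) \mp d\omega \wedge \gamma$, together with closedness of $C$ (and the dual computation when $C$ is a boundary), shows that $\cap$ descends to (co)homology.

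With $\cap$ set up, the lemma is a direct comparison of the two defining sequences. The left hand side unfolds to
\[
\langle [\beta]\wedge[\gamma], [C] \rangle_{(M_i)_i, \tau} = \tau\!\left(\tfrac{1}{\vol M_i}\, C\!\left(\varphi_i \cdot (\beta \wedge \gamma)\right)\right),
\]
while the right hand side, by definition of $\cap$, becomes
\[
\langle [\beta], [\gamma]\cap[C] \rangle_{(M_i)_i, \tau} = \tau\!\left(\tfrac{1}{\vol M_i}\, C\!\left((\varphi_i \beta) \wedge \gamma\right)\right).
\]
Since $\varphi_i$ is scalar-valued, one has $\varphi_i \cdot (\beta \wedge \gamma) = (\varphi_i \beta) \wedge \gamma$, so the two sequences agree term by term and hence have the same image under $\tau$.

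The only thing to be careful about is that all the objects sit in the appropriate function spaces: $(\varphi_i \beta) \wedge \gamma$ lies in $\Winftyone(\Omega^{p+q}(M))$ because $\varphi_i \beta$ has compact support and $\gamma$ is uniformly smooth, and the resulting numerical sequence is bounded by the uniform bounds on $\beta$, $\gamma$, the derivatives of $\varphi_i$, and the continuity of the current $C$ on $\Winftyone$. There is no genuine obstacle here; the content of the lemma is precisely that the cap product is defined so as to make the pairing contravariant in the cohomological argument, and the cut-off $\varphi_i$ does not interfere because it acts by scalar multiplication.
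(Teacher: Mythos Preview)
Your argument is correct: once the cap product is defined by $(\gamma \cap C)(\omega) := C(\omega \wedge \gamma)$, the identity reduces to the scalar-multiplication fact $\varphi_i(\beta \wedge \gamma) = (\varphi_i \beta) \wedge \gamma$, and you have checked the relevant continuity and well-definedness. The paper itself states this lemma without proof, treating it as a routine verification, so there is nothing further to compare against; your write-up supplies exactly the details one would expect.
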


So combining the above two lemmas together with the results of Section \ref{sec:local_index_thm} we finally arrive at our desired index theorem for amenable manifolds which generalizes Roe's index theorem from \cite{roe_index_1} from graded generalized Dirac operators to arbitrarily graded, symmetric, elliptic uniform pseudodifferential operators.

\begin{cor}\label{cor:pairing_global}
Let $M$ be a manifold of bounded geometry and without boundary, let $(M_i)_i$ be a \Folner sequence for $M$ and let $\tau \in (\ell^\infty)^\ast$ be a linear functional associated to a free ultrafilter on $\IN$. Denote the from the choice of \Folner sequence and functional $\tau$ resulting functional on $K_0(C_u^\ast(\Gamma))$ by $\theta$, where $\Gamma \subset M$ is a discretization.

Then for both $p \in \{ 0,1 \}$, every $[P] \in K_p^u(M)$ for $P$ a $p$-graded, symmetric, elliptic uniform pseudodifferential operator over $M$, and every $u \in K_u^p(M)$ we have
\[\langle u, [P] \rangle_\theta = \langle \ch(u) \wedge \ind(P), [M] \rangle_{(M_i)_i, \tau}.\]
\end{cor}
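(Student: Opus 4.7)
The strategy is to chain together the local index formula with the two compatibility lemmas that immediately precede the corollary. More precisely, I would proceed in three formal steps, assuming first that $M$ is orientable; if it is not, I would pass to the orientation double cover as in the proof of Corollary~\ref{cor:ch_well_defined}, since both sides of the claimed equality behave naturally under this passage and $\HudR_\ast$ is a $\IC$-vector space (so the factor of $2$ coming from the double cover is harmless).

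First, apply the lemma on compatibility of the two index pairings with the Chern--Connes characters to rewrite
\[
\langle u, [P] \rangle_\theta \;=\; \big\langle \ch(u),\, (\alpha_\ast \circ \ch^\ast)([P]) \big\rangle_{(M_i)_i, \tau}.
\]
Second, invoke the local index theorem: by Theorem~\ref{thm:local_thm_pseudos}, extended to arbitrary $[P] \in K_\ast^u(M)$ via Remark~\ref{rem_finitely_summable_not_needed} (which is legitimate because by Corollary~\ref{cor:ch_well_defined} the uniform homological Chern character is well-defined, so the identification of classes in $\HudR_\ast(M)$ depends only on $[P]$), the class $(\alpha_\ast \circ \ch^\ast)([P]) \in \HudR_\ast(M)$ is the Poincar\'e dual of $\ind(P) \in \HbdR^\ast(M)$ under the isomorphism of Theorem~\ref{thm:Poincare_de_Rham}. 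Since the Poincar\'e duality map is implemented on forms by $\gamma \mapsto (\omega \mapsto \int_M \omega \wedge \gamma)$, this may be expressed as the cap product formula
\[
(\alpha_\ast \circ \ch^\ast)([P]) \;=\; \ind(P) \cap [M] \;\in\; \HudR_\ast(M),
\]
where $[M] \in \HudR_m(M)$ is the uniform de Rham fundamental current given by integration against the volume form.

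Third, apply the second compatibility lemma (the one stating $\langle[\beta]\wedge[\gamma],[C]\rangle_{(M_i)_i,\tau} = \langle[\beta],[\gamma]\cap[C]\rangle_{(M_i)_i,\tau}$) with $[\beta] = \ch(u)$, $[\gamma] = \ind(P)$ and $[C] = [M]$ to move the cap product across and obtain
\[
\big\langle \ch(u),\, \ind(P) \cap [M] \big\rangle_{(M_i)_i, \tau}
\;=\; \big\langle \ch(u) \wedge \ind(P),\, [M] \big\rangle_{(M_i)_i, \tau},
\]
which is the desired right-hand side.

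The main obstacle I anticipate is not any single step in isolation but rather the bookkeeping at the interface between the two halves of the argument: one must verify that the identification $(\alpha_\ast \circ \ch^\ast)([P]) = \ind(P) \cap [M]$ coming from Theorem~\ref{thm:local_thm_pseudos} is compatible, on the nose and not merely up to some undetermined scalar, with the Poincar\'e duality isomorphism of Theorem~\ref{thm:Poincare_de_Rham} and with the particular normalization of the fundamental class $[M]$ induced by the F\o{}lner sequence in the pairing $\langle -,- \rangle_{(M_i)_i,\tau}$. Once these normalizations are pinned down and the nonorientable case is handled by the orientation cover trick, the corollary falls out as a formal consequence.
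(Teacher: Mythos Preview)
Your proposal is correct and follows essentially the same approach as the paper: the paper's proof consists of the single sentence ``combining the above two lemmas together with the results of Section~\ref{sec:local_index_thm}'', and you have spelled out exactly this combination in detail, including the reduction to the orientable case via the orientation cover (which the paper handles in the proofs of Theorem~\ref{thm:local_thm_pseudos} and Corollary~\ref{cor:ch_well_defined}). Your caveat about normalizations is prudent but does not reflect a genuine gap, since the \Poincare duality map of Theorem~\ref{thm:Poincare_de_Rham} is defined on the nose by integration against forms and the fundamental class $[M]$ is simply the image of $1 \in \Omega_b^0(M)$ under that map.
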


\begin{rem}
The right hand side of the formula in the above corollary reads as
\[\tau \Big( \frac{1}{\vol M_i} \int_{M_i} \ch(u) \wedge \ind(P) \Big)\]
and this is continuous against the sup-seminorm on $\HbdR^m(M)$ with $m = \dim(M)$, i.e.,
\[\langle u, [P] \rangle_\theta \le \| \ch(u) \wedge \ind(P) \|_\infty.\]
So, again as in Remark \ref{rem:local_pairing_cont}, we see that with this pairing we can not detect operators $P$ whose index class $\ind(P) \in \HbdR^\ast(M)$ has sup-seminorm $=0$ in every degree.

Note that it seems that from the results in \cite[Part II.\S 4]{sullivan} it follows that every element in $\HbdR^m(M)$ of non-zero sup-seminorm may be detected by a \Folner sequence (i.e., the dual space $\overline{H}_{b, \mathrm{dR}}^\ast(M)$ of the reduced bounded de Rham cohomology\footnote{Reduced bounded de Rham cohomology is defined as $\overline{H}_{b, \mathrm{dR}}^\ast(M) := \HbdR^\ast(M) / \overline{[0]}$, i.e., as the Hausdorffication of bounded de Rham cohomology.} is spanned by \Folner sequences). So the difference between the statement of the above corollary and Theorem \ref{thm:local_thm_pseudos} lies, at least in top-degree, exactly in the fact that Theorem \ref{thm:local_thm_pseudos} also encompasses all the elements of sup-seminorm $=0$.
\end{rem}

\begin{example}
Let us discuss quickly an example that shows that we indeed may lose information by passing to the reduced bounded de Rham cohomology groups. Roe showed in \cite[Proposition 3.2]{roe_index_2} that if $M^m$ is a connected spin manifold of bounded geometry, then $\langle \hat{A}(M), [M] \rangle_{-,-} = 0$ for any choice of \Folner sequence and suitable functional $\tau$ if $M$ has non-negative scalar curvature, and later Whyte showed in \cite[Theorem 2.3]{whyte} that $\hat{A}(M) = [0] \in \HbdR^m(M)$ under these assumptions. So any connected spin manifold $M$ of bounded geometry with $\hat{A}(M) \not= [0] \in \HbdR^m(M)$ but $\hat{A}(M) = [0] \in \overline{H}_{b, \mathrm{dR}}^m(M)$ can not have non-negative scalar curvature, but this is not detected by the reduced group. In \cite{whyte} it is also shown how one can construct examples of manifolds whose $\hat{A}$-genus vanishes in the reduced but not in the unreduced group.
\end{example}

\section{Final remarks and open questions}

In this final section we will collect some open questions arising out of the present paper.

In Theorem~\ref{thm_Chern_character} we constructed the uniform Chern character $\ch \colon K^\ast_u(M) \to \HudRco^\ast(M)$ for manifolds of bounded geometry by using Chern--Weil theory. In the case of compact spaces there exist definitions of the Chern character which make sense on any finite CW-complex, i.e., are not restricted to smooth manifolds. Now in our situation, uniform $K$-theory is defined on all metric spaces and not just on manifolds, and since uniform de Rham cohomology is isomorphic to $L^\infty$-simplicial cohomology if we triangulate $M$ as a simplicial complex of bounded geometry using Theorem \ref{thm:triangulation_bounded_geometry} we can make sense out of it for more general spaces than smooth manifolds. So we arive at the following question:

\begin{question}\label{ques:uniform_chern}
How can we define the uniform Chern characters $K_u^\ast(L) \to H^\ast_\infty(L)$ and $K_\ast^u(L) \to H_\ast^\infty(L)$ for a simplicial complex $L$ of bounded geometry equipped with the metric derived from barycentric coordinates?
\end{question}

One approach might be to consider something like uniform (co-)homology theories: we could try to put a model structure on the category of uniform spaces modeling uniform homotopy theory and then try to show that, e.g., uniform $K$-theory is nothing more but uniform homotopy classes of uniform maps into some uniform version of the $K$-theory spectrum. Then the uniform Chern characters should be coming from transformations of uniform spectra and the above Question \ref{ques:uniform_chern} would be solved.

In the compact case there is a generalization of the Atiyah--Singer index theorem to manifolds with boundary involving the $\eta$-invariant. This version of the index theorem for compact manifolds with boundary is called the Atiyah--Patodi--Singer index theorem and was introduced in \cite{atiyah_patodi_singer_1}. Of course the question whether such a theorem may also be proven in the non-compact case immediately arises.

\begin{question}
Is there a version of the, e.g., global index theorem for amenable manifolds, for manifolds of bounded geometry and with boundary? What would be the corresponding generalization of the $\eta$-invariant?
\end{question}

Note that even if we just stick to Dirac operators (i.e., if we don't try to work with uniform pseudodifferential operators) the non-compact case (of bounded geometry) is of course technically much more demanding than the compact case. Results have been achieved by Ballmann--Bär \cite{ballmann_baer} and Gro{\ss}e--Nakad \cite{grosse_nakad}.

Some version of pseudodifferential operators on certain non-compact manifolds with boundary was investigated by Schrohe \cite{schrohe}. Furthermore, there is also the work of Ammann--Lauter--Nistor \cite{ammann_lauter_nistor_2} and one should also ask to which extend it coincides, respectively differs from the one asked for here.

A proof of the index theorem for manifolds with boundary was given by Melrose in \cite{melrose_APS}. He invented the $b$-calculus, a calculus for pseudodifferential operators on manifolds with boundary, and derived the Atiyah--Patodi--Singer index theorem from it via the heat kernel approach. Therefore it would be desirable to extend his $b$-calculus to open manifolds with boundary (similarly as we extended the calculus of pseudodifferential operators to open manifolds) and then prove a version of the Atiyah--Patodi--Singer index theorem on manifolds with boundary and of bounded geometry.

\begin{question}
Can one reasonably extend the $b$-calculus of Melrose to manifolds of bounded geometry and with boundary, and then prove version of large scale index theorems for manifolds with boundary?
\end{question}

In the case of compact manifolds with boundary Piazza \cite{piazza_phd} also treated various parts of the index theorem of Atiyah--Patodi--Singer using the $b$-calculus. A connection between uniform pseudodifferential operators on manifolds of bounded geometry and the $b$-calculus was established by Albin \cite{albin}.

Another direction in which one could work is to look at higher $\rho$-invariants: in the last years a lot of progress was made in relation to ``mapping sugery to analysis'', respectively mapping the Stolz positive scalar curvature exact sequence to analysis. Without going through all the results that have been achieved, let us mention one particular application \cite[Corollary 4.5]{xie_yu} that seems worth reshaping into our setting: if $\Gamma$ acts properly and cocompactly on $M$ and $h$ is a Riemannian metric on $\partial M$ having positive scalar curvature, then it is not possible to extend $h$ to a complete, $\Gamma$-invariant Riemannian metric on~$M$ of positive scalar curvature if $\rho(D_{\partial M},h) \not= 0 \in K_\ast(C^\ast_{L,0}(\partial M)^\Gamma)$.

\begin{question}
Can one prove a large scale version of the delocalized APS-index theorem as in \cite[Theorem 1.22]{piazza_schick} and use this to prove an analogue of the above mentioned result \cite[Corollary 4.5]{xie_yu}?
\end{question}

\bibliography{./Bibliography_Index_theory_uniformly_elliptic}
\bibliographystyle{amsalpha}

\end{document}